
\documentclass[12pt, a4paper, twoside,leqno]{amsart}
\usepackage[centering, totalwidth = 405pt, totalheight = 600pt]{geometry}
\usepackage{amssymb, amsmath, amsthm, enumerate, microtype, stmaryrd, url}
\usepackage[latin1]{inputenc}
\usepackage[arrow, matrix, tips, curve]{xy}
\usepackage{color}
\definecolor{darkgreen}{rgb}{0,0.45,0}
\usepackage[pagebackref,colorlinks,citecolor=darkgreen,linkcolor=darkgreen]{hyperref}
\SelectTips{cm}{10}


 \DeclareMathOperator{\ob}{ob}

\newcommand{\cat}[1]{\mathbf{#1}}
\newcommand{\op}{\mathrm{op}}
\newcommand{\id}{\mathrm{id}}
\newcommand{\thg}{{\mathord{\text{--}}}}

\newcommand{\abs}[1]{{\left|{#1}\right|}}

\newcommand{\defeq}{\mathrel{\mathop:}=}

\newcommand{\cd}[2][]{\vcenter{\hbox{\xymatrix#1{#2}}}}


\renewcommand{\phi}{\varphi}

\newcommand{\B}{{\mathcal B}}
\newcommand{\C}{{\mathcal C}}
\newcommand{\D}{{\mathcal D}}
\newcommand{\E}{{\mathcal E}}
\newcommand{\F}{{\mathcal F}}

\newcommand{\K}{{\mathcal K}}
\renewcommand{\L}{{\mathcal L}}
\newcommand{\M}{{\mathcal M}}

\renewcommand{\O}{{\mathcal O}}
\renewcommand{\P}{{\mathcal P}}

\newcommand{\V}{{\mathcal V}}
\newcommand{\W}{{\mathcal W}}

\newcommand{\Y}{{\mathcal Y}}

\renewcommand{\o}{\O}
\newcommand{\om}[1]{{#1^\ast}}

\newcommand{\xtor}[1]{\cd[@1]{{} \ar[r]|-{\object@{|}}^{#1} & {}}}
\newcommand{\tor}{\ensuremath{\relbar\joinrel\mapstochar\joinrel\rightarrow}}

\makeatletter

\def\hookleftarrowfill@{\arrowfill@\leftarrow\relbar{\relbar\joinrel\rhook}}
\def\twoheadleftarrowfill@{\arrowfill@\twoheadleftarrow\relbar\relbar}
\def\leftbararrowfill@{\arrowdoublefill@{\leftarrow\mkern-5mu}\relbar\mapstochar\relbar\relbar}
\def\Leftbararrowfill@{\arrowdoublefill@{\Leftarrow\mkern-2mu}\Relbar\Mapstochar\Relbar\Relbar}
\def\leftringarrowfill@{\arrowdoublefill@{\leftarrow\mkern-3mu}\relbar{\mkern-3mu\circ\mkern-2mu}\relbar\relbar}
\def\lefttriarrowfill@{\arrowfill@{\mathrel\triangleleft\mkern0.5mu\joinrel\relbar}\relbar\relbar}
\def\Lefttriarrowfill@{\arrowfill@{\mathrel\triangleleft\mkern1mu\joinrel\Relbar}\Relbar\Relbar}

\def\hookrightarrowfill@{\arrowfill@{\lhook\joinrel\relbar}\relbar\rightarrow}
\def\twoheadrightarrowfill@{\arrowfill@\relbar\relbar\twoheadrightarrow}
\def\rightbararrowfill@{\arrowdoublefill@{\relbar\mkern-0.5mu}\relbar\mapstochar\relbar\rightarrow}
\def\Rightbararrowfill@{\arrowdoublefill@{\Relbar\mkern-2mu}\Relbar\Mapstochar\Relbar\Rightarrow}
\def\rightringarrowfill@{\arrowdoublefill@\relbar\relbar{\mkern-2mu\circ\mkern-3mu}\relbar{\mkern-3mu\rightarrow}}
\def\righttriarrowfill@{\arrowfill@\relbar\relbar{\relbar\joinrel\mkern0.5mu\mathrel\triangleright}}
\def\Righttriarrowfill@{\arrowfill@\Relbar\Relbar{\Relbar\joinrel\mkern1mu\mathrel\triangleright}}

\def\leftrightarrowfill@{\arrowfill@\leftarrow\relbar\rightarrow}
\def\mapstofill@{\arrowfill@{\mapstochar\relbar}\relbar\rightarrow}

\renewcommand*\xleftarrow[2][]{\ext@arrow 20{20}0\leftarrowfill@{#1}{#2}}
\providecommand*\xLeftarrow[2][]{\ext@arrow 60{22}0{\Leftarrowfill@}{#1}{#2}}
\providecommand*\xhookleftarrow[2][]{\ext@arrow 10{20}0\hookleftarrowfill@{#1}{#2}}
\providecommand*\xtwoheadleftarrow[2][]{\ext@arrow 60{20}0\twoheadleftarrowfill@{#1}{#2}}
\providecommand*\xleftbararrow[2][]{\ext@arrow 10{22}0\leftbararrowfill@{#1}{#2}}
\providecommand*\xLeftbararrow[2][]{\ext@arrow 50{24}0\Leftbararrowfill@{#1}{#2}}
\providecommand*\xleftringarrow[2][]{\ext@arrow 10{26}0\leftringarrowfill@{#1}{#2}}
\providecommand*\xlefttriarrow[2][]{\ext@arrow 80{24}0\lefttriarrowfill@{#1}{#2}}
\providecommand*\xLefttriarrow[2][]{\ext@arrow 80{24}0\Lefttriarrowfill@{#1}{#2}}

\renewcommand*\xrightarrow[2][]{\ext@arrow 01{20}0\rightarrowfill@{#1}{#2}}
\providecommand*\xRightarrow[2][]{\ext@arrow 04{22}0{\Rightarrowfill@}{#1}{#2}}
\providecommand*\xhookrightarrow[2][]{\ext@arrow 00{20}0\hookrightarrowfill@{#1}{#2}}
\providecommand*\xtwoheadrightarrow[2][]{\ext@arrow 03{20}0\twoheadrightarrowfill@{#1}{#2}}
\providecommand*\xrightbararrow[2][]{\ext@arrow 01{22}0\rightbararrowfill@{#1}{#2}}
\providecommand*\xRightbararrow[2][]{\ext@arrow 04{24}0\Rightbararrowfill@{#1}{#2}}
\providecommand*\xrightringarrow[2][]{\ext@arrow 01{26}0\rightringarrowfill@{#1}{#2}}
\providecommand*\xrighttriarrow[2][]{\ext@arrow 07{24}0\righttriarrowfill@{#1}{#2}}
\providecommand*\xRighttriarrow[2][]{\ext@arrow 07{24}0\Righttriarrowfill@{#1}{#2}}

\providecommand*\xmapsto[2][]{\ext@arrow 01{20}0\mapstofill@{#1}{#2}}
\providecommand*\xleftrightarrow[2][]{\ext@arrow 10{22}0\leftrightarrowfill@{#1}{#2}}
\providecommand*\xLeftrightarrow[2][]{\ext@arrow 10{27}0{\Leftrightarrowfill@}{#1}{#2}}

\makeatother


\newcommand{\twocong}[2][0.5]{\ar@{}[#2] \save ?(#1)*{\cong}\restore}
\newcommand{\twoeq}[2][0.5]{\ar@{}[#2] \save ?(#1)*{=}\restore}
\newcommand{\rtwocell}[3][0.5]{\ar@{}[#2] \ar@{=>}?(#1)+/l 0.2cm/;?(#1)+/r 0.2cm/^{#3}}
\newcommand{\ltwocell}[3][0.5]{\ar@{}[#2] \ar@{=>}?(#1)+/r 0.2cm/;?(#1)+/l 0.2cm/^{#3}}
\newcommand{\ltwocello}[3][0.5]{\ar@{}[#2] \ar@{=>}?(#1)+/r 0.2cm/;?(#1)+/l 0.2cm/_{#3}}
\newcommand{\dtwocell}[3][0.5]{\ar@{}[#2] \ar@{=>}?(#1)+/u  0.2cm/;?(#1)+/d 0.2cm/^{#3}}
\newcommand{\dltwocell}[3][0.5]{\ar@{}[#2] \ar@{=>}?(#1)+/ur  0.2cm/;?(#1)+/dl 0.2cm/^{#3}}
\newcommand{\drtwocell}[3][0.5]{\ar@{}[#2] \ar@{=>}?(#1)+/ul  0.2cm/;?(#1)+/dr 0.2cm/^{#3}}
\newcommand{\dthreecell}[3][0.5]{\ar@{}[#2] \ar@3{->}?(#1)+/u  0.2cm/;?(#1)+/d 0.2cm/^{#3}}
\newcommand{\utwocell}[3][0.5]{\ar@{}[#2] \ar@{=>}?(#1)+/d 0.2cm/;?(#1)+/u 0.2cm/_{#3}}
\newcommand{\dtwocelltarg}[3][0.5]{\ar@{}#2 \ar@{=>}?(#1)+/u  0.2cm/;?(#1)+/d 0.2cm/^{#3}}
\newcommand{\utwocelltarg}[3][0.5]{\ar@{}#2 \ar@{=>}?(#1)+/d  0.2cm/;?(#1)+/u 0.2cm/_{#3}}

\newdir{(}{{}*!<0em,-.14em>-\cir<.14em>{l^r}}
\newdir{ (}{{}*!/-5pt/\dir{(}}
\newdir{ >}{{}*!/-5pt/\dir{>}}


\theoremstyle{definition}

\swapnumbers
\theoremstyle{plain}
\newtheorem{Thm}[subsection]{Theorem}
\newtheorem{Prop}[subsection]{Proposition}
\newtheorem{Cor}[subsection]{Corollary}
\newtheorem{Lemma}[subsection]{Lemma}

\numberwithin{equation}{section}

\theoremstyle{definition}

\newtheorem{Exs}[subsection]{Examples}
\newtheorem{Rk}[subsection]{Remark}

\newcommand{\Lan}{\mathrm{Lan}}

\begin{document}
 \leftmargini=2em
\title[Restriction categories as enriched categories]{Restriction categories\\as enriched categories}
\author{Robin Cockett}
\address{Department of Computer Science, University of Calgary, 2500 University Dr. NW, Calgary, Alberta, Canada T2N 1N4}
\email{robin@ucalgary.ca}
\author{Richard Garner}
\address{Department of Computing, Macquarie University, North Ryde, NSW 2109, Australia}
\email{richard.garner@mq.edu.au}
\date{\today}
\thanks{Partially supported by NSERC Canada and by the Australian Research Council (Discovery Projects Scheme, grant number DP110102360).}
\begin{abstract}
Restriction categories were introduced to provide an axiomatic setting for the study of partially defined mappings; 
they are categories equipped with an operation called \emph{restriction} which assigns to every morphism an endomorphism of its domain,
to be thought of as the partial identity that is defined to just the same degree as the original map. In this paper, we show that
restriction categories can be identified with \emph{enriched categories} in the sense of Kelly for a suitable enrichment base. By
varying that base appropriately, we are also able to capture the notions of \emph{join} and \emph{range} restriction category in terms of enriched
category theory.
\end{abstract}

\maketitle

\section{Introduction}
The notion of \emph{restriction category} was introduced in~\cite{Cockett2002Restriction} and provides an axiomatic setting for the study of notions of partiality. A restriction category is a category equipped with an operation which, to every map $f \colon A \to B$ assigns a map $\bar f \colon A \to A$, subject to four axioms designed to capture the following intuition: that the morphisms of a restriction category represent partially defined maps, with the map $\bar f$ being the partial identity map of $A$ which is defined to just the same degree as $f$ is. This structure, whilst very simple, supports a theory of partiality rich enough to capture results from computability theory, algebraic geometry, differential geometry, and the theory of inverse semigroups; see~\cite{Cockett2011Differential,Cockett2008Introduction,Cockett2002Restriction}.

A restriction category is a particular kind of structured category, and so many aspects of ordinary category theory have ``restriction'' correlates: thus there are notions of restriction functors and natural transformation, of restriction limits and colimits, and so on. The generalisation from ordinary categories to restriction categories requires some thought; so, for example, whilst limits and colimits in ordinary categories behave in a completely dual manner, the same is not true of restriction limits and restriction colimits---essentially because the notion of restriction category is not self-dual. It is therefore reasonable to ask how one may justify the validity of these generalisations.

In this article, we answer this question by exhibiting restriction categories as a particular kind of \emph{enriched category} in the sense of~\cite{Kelly1982Basic,Walters1981Sheaves}. This means that aspects of the theory of restriction categories can be read off from the corresponding aspects of enriched category theory. Whilst we shall not do this here, we will in subsequent work exploit these observations to define \emph{weighted} restriction limits and colimits, using them to exhibit categories of sheaves, of schemes, of manifolds, and so on, as ``free cocompletions in the restriction world''. 

We now give a brief overview of the contents of this paper. In Section~\ref{sec:2} we recall the basic restriction notions, 
and introduce the (localic, hyperconnected) factorisation system\footnote{While this factorization has clear categorical precedents, we were not able to find reference to it in the inverse or restriction semigroup literature.} on restriction functors. The hyperconnected restriction functors are, intuitively, those which reflect as well as preserve the restriction structure; the localic morphisms are abstractly characterised as those orthogonal on the left to the hyperconnected ones. However, we are able to provide an explicit description of the localic morphisms, and also of the (localic, hyperconnected) factorisation of a restriction functor.  

In Section~\ref{sec:fund}, we recall the \emph{fundamental functor} associated to every restriction category $\C$; this is a canonical restriction functor from $\C$ into a particular restriction category $\cat{Stab}^\op$. We give a universal characterisation of the fundamental functor by showing that it is, to within isomorphism, the unique hyperconnected functor $\C \to \cat{Stab}^\op$.

In Section~\ref{sec:3}, we recall the construction which assigns a $2$-category $\Gamma \C$ to any restriction category $\C$; we see that the hyperconnected restriction functors $\C \to \D$ correspond to the $2$-functors $\Gamma \C \to \Gamma \D$ which are local discrete fibrations (discrete fibrations on each hom-category). Combining this with the results of Section~\ref{sec:2}, we show that the definition of restriction category can be recast in purely $2$-categorical terms: a restriction category corresponds to a local discrete fibration of $2$-categories whose codomain is $\Gamma(\cat{Stab}^\op)$.

In Section~\ref{sec:enrichment}, we break off briefly to describe the appropriate notions of enrichment required for our main result; and then in Section~\ref{sec:4}, we give that result, exhibiting restriction categories, functors and natural transformations as enriched categories, functors and natural transformations over a suitable enrichment base. The key result we require is one due to Richard Wood in collaboration with the first author, characterising local discrete fibrations over a given $2$-category as categories enriched in an associated bicategory.

Finally, in Sections~\ref{sec:join} and~\ref{sec:range}, we give two variations on our main result, by considering \emph{join} restriction categories---ones in which families of compatible partial maps admits patchings---and \emph{range} restriction categories---ones in which every map has a ``codomain'' as well as a ``domain'' of definition---and exhibiting both kinds of structure as enriched categories over some suitably modified base.

\section{Restriction categories and the (localic, hyperconnected) factorisation}\label{sec:2}

We begin by recalling from~\cite{Cockett2002Restriction} some basic facts and results. A \emph{restriction category} is a category $\C$ equipped with an operation assigning to each map $f \colon A \to B$ in $\C$ a map $\bar f \colon A \to A$, subject to the four axioms:
\begin{enumerate}[(R1)]
\item $f \bar f = f$ for all $f \colon A \to B$;
\item $\bar f \bar g = \bar g \bar f$ for all $f \colon A \to B$ and $g \colon A \to C$;
\item $\overline{g \bar f} = \bar g \bar f$ for all $f \colon A \to B$ and $g \colon A \to C$;
\item $\bar g f = f \overline{gf}$ for all $f \colon A \to B$ and $g \colon B \to C$.
\end{enumerate}

A functor $F \colon \C \to \D$ between restriction categories is a \emph{restriction functor} if $F\bar f = \overline{Ff}$ for all $f \colon A \to B$ in $\C$. The restriction categories and restriction functors are the objects and $1$-cells of a $2$-category $\cat{rCat}$, whose $2$-cells are \emph{total} natural transformations; a map $f \colon A \to B$ in a restriction category is called \emph{total} if $\bar f = 1_A$, and a natural transformation is called total if all its components are. 

If $\C$ is a restriction category, we may partially order each of its homsets by taking $f \leqslant g$ just when $g \bar f = f$; the informal meaning being that $f$ is obtained from $g$ by restricting  it to some smaller domain of definition. This partial ordering is preserved by composition, and also by the action on homs of any restriction functor.

The examples that follow are drawn from~\cite[\S 2.1.3]{Cockett2002Restriction}.
\begin{Exs}\label{exs:restriction}\hfill
\begin{enumerate}[(i)]
\item The category $\cat{Set}_p$ of sets and partial functions is a restriction category, where to each partial function $f \colon A \rightharpoonup B$ we assign the partial function $\bar f \colon A \rightharpoonup A$ defined by taking $\bar f(a)$ to be $a$ if $f(a)$ is defined, and to be undefined otherwise.
\vskip0.5\baselineskip
\item The category $\cat{Top}_p$, of topological spaces and continuous functions defined on some open subset of their domain, is a restriction category. The restriction structure is given as in (i).
\vskip0.5\baselineskip
\item Generalising (i) and (ii), let $\D$ be a category equipped with a class $\M$ of monics which is closed under composition, stable under pullback, and contains the identities. There is a restriction category $\cat{Par}(\D, \M)$
with objects those of $\D$, and as morphisms $X \tor Y$, isomorphism-classes of spans $f \colon X \leftarrow Z \rightarrow Y \colon g$ with left leg in $\M$. The restriction of such a span $(f,g)$ is $(f,f) \colon X \tor X$.
\vskip0.5\baselineskip
\item The category $\cat{Rec}$ with objects, the natural numbers, and morphisms $n \to m$, partial recursive functions $\mathbb N^n \to \mathbb N^m$, is a restriction category; the restriction structure is as in (i), bearing in mind that $\bar f$ will be partial recursive whenever $f$ is.  $\cat{Rec}$ is the canonical example of a {\em Turing category\/}: for more details, see \cite{Cockett2008Introduction}.
\vskip0.5\baselineskip
\item An \emph{inverse monoid} is a monoid $M$ such that, for each $x \in M$, there exists a unique element $x^*$, the \emph{partial inverse} of $x$, satisfying $xx^*x = x$ and $x^* x x^* = x^*$; the basic example is the set of injective partial endofunctions of some set $A$. An inverse monoid can be seen as a one object restriction category where the restriction structure is given by $\bar x = x x^*$.  More generally, an \emph{inverse category} is a restriction category in which every map $x$ is a \emph{partial isomorphism}, meaning that there is a map $x^*$ such that $\bar x = x^* x$ and $\overline{x^*} = x x^*$; such partial inverses can be shown to be unique if they exist. Inverse categories stand in the same relationship to restriction categories as groupoids do to ordinary categories; a one object inverse category is an inverse monoid.

\vskip0.5\baselineskip
\item Consider the category $\cat{Stab}$ whose objects are meet-semilattices, and whose morphisms $A \to B$ are \emph{stable maps}---monotone maps which preserve binary meets, but not necessarily the top element. The category $\cat{Stab}^\op$ is a restriction category under the structure which to a stable map $f \colon A \leftarrow B$ assigns the morphism $\bar f \colon A \leftarrow A$ given by
$\bar f(a) = a \wedge f(\top)$.
\end{enumerate}
\end{Exs}

In a restriction category $\C$, each map of the form $\bar f \colon A \to A$ is idempotent, and we call them \emph{restriction idempotents}; they are equally well the endomorphisms $e$ satisfying $\bar e = e$. We write $\O(A)$ for the set of restriction idempotents on $A$. When $\C = \cat{Set}_p$,  $\O(A)$ is isomorphic to the power-set of $A$; when $\C = \cat{Top}_p$,  $\O(A)$ is isomorphic to the open-set lattice of $A$; when $\C = \cat{Par}(\D, \M)$, $\O(A)$ is the set of $\M$-subobjects of $A$; when $\C = \cat{Rec}$, $\O(n)$ is isomorphic to the set of recursively enumerable subsets of $\mathbb N^n$; and when $\C = \cat{Stab}^\op$, $\O(A)$ is isomorphic to $A$ itself.

We now introduce a class of restriction functors which will play an important role in what follows. Any restriction functor $F \colon \C \to \D$ sends restriction idempotents to restriction idempotents, and so induces, for each $A \in \C$ a mapping $\O(A) \to \O(FA)$. We define $F$ to be \emph{hyperconnected} if each such mapping is an isomorphism.  Our terminology is drawn from topos theory, where a geometric morphism $f \colon \E \to \F$  is called hyperconnected if its inverse image functor $f^\ast \colon \F \to \E$ induces isomorphisms of subobject lattices $\cat{Sub}_\F(A) \to \cat{Sub}_\E(f^\ast A)$ for every $A \in \E$. We can make the analogy precise: to each a topos $\E$ we can associate the restriction category $\cat{Par}(\E, \M)$ as in Examples~\ref{exs:restriction}(iii), with $\M$ the class of all monomorphisms in $\E$; 
and to each geometric morphism $f \colon \E \to \F$, we can, via its inverse image part, associate a restriction functor $\cat{Par}(\F, \M) \to \cat{Par}(\E, \M)$. This restriction functor will be hyperconnected just when the original $f$ is a hyperconnected geometric morphism.

In the topos-theoretic context, we have a factorisation system (hyperconnected, localic); in the restriction setting, we have a corresponding factorisation (localic, hyperconnected). Note the reversal of the two classes: this is because restriction functors point in the ``algebraic'' direction whereas topos morphisms point in the opposite, ``geometric'' direction. In the restriction setting, we define a restriction functor $F \colon \C \to \D$ to be \emph{localic} just when it is bijective on objects, and for every map $g \colon FA \to FB$ in $\D$, the poset of maps $f \colon A \to B$ in $\C$ with $g \leqslant Ff$ is downwards-directed (in particular, nonempty). 

\begin{Prop}\label{prop:localic-hyper-orthogonal}
Localic and hyperconnected restriction functors are orthogonal.
\end{Prop}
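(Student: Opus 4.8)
The plan is to establish the orthogonality directly, by constructing unique diagonal fillers. So suppose given a commutative square of restriction functors
\[
\cd{
\A \ar[r]^-{G} \ar[d]_-{L} & \C \ar[d]^-{H} \\
\B \ar[r]_-{K} & \D
}
\]
with $L$ localic and $H$ hyperconnected; we must produce a unique restriction functor $D \colon \B \to \C$ with $DL = G$ and $HD = K$. On objects there is no freedom: since $L$ is bijective on objects, every object of $\B$ is $LA$ for a unique $A \in \A$, and we are forced to set $D(LA) \defeq GA$, which satisfies $HD(LA) = HGA = KLA$. All the work lies in the action of $D$ on homs, and the cornerstone is the following observation: for every $f \colon A \to A'$ in $\A$, the functor $H$ restricts to a \emph{bijection}
\[
\set{h \colon GA \to GA'}{h \leqslant Gf} \;\longrightarrow\; \set{h' \colon HGA \to HGA'}{h' \leqslant HGf} \ .
\]

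First I would prove this. By axioms (R1) and (R3), the map $h \mapsto \bar h$ identifies the left-hand set with the principal down-set $\set{e \in \O(GA)}{e \leqslant \overline{Gf}}$ --- its inverse being $e \mapsto (Gf)\,e$ --- and likewise on the right. Under these identifications the displayed map becomes the map $e \mapsto He$ from $\O(GA)$ to $\O(HGA)$, suitably restricted. Now the hypothesis that $H$ is hyperconnected makes this map a bijection; and since any restriction functor carries the meet-semilattice structure on restriction idempotents ($e \wedge e' = ee'$) to that of their images, it is in fact a meet-semilattice isomorphism, hence an order-isomorphism --- so it does restrict to a bijection between the relevant principal down-sets, which is what was claimed.

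Granting the bijection, I would define $Dg$, for $g \colon LA \to LA'$ in $\B$, by choosing (using that $L$ is localic) a morphism $f \colon A \to A'$ with $g \leqslant Lf$, noting $Kg \leqslant KLf = HGf$, and letting $Dg$ be the unique $h \leqslant Gf$ with $Hh = Kg$. This does not depend on $f$: if $g \leqslant Lf_1$ and $g \leqslant Lf_2$, then downward-directedness (part of localicity) gives $f_3 \leqslant f_1, f_2$ with $g \leqslant Lf_3$, and since then $Gf_3 \leqslant Gf_1, Gf_2$, the unique $h \leqslant Gf_3$ with $Hh = Kg$ is simultaneously the unique such map below $Gf_1$ and below $Gf_2$. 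Then $DL = G$ is immediate (take $f$ to be the given map); $D$ preserves identities since $DL = G$ does; for composable $g, g'$ with chosen witnesses $f, f'$ one has $g'g \leqslant L(f'f)$ while $Dg'\cdot Dg \leqslant G(f'f)$ and $H(Dg'\cdot Dg) = K(g'g)$, so $D(g'g) = Dg'\cdot Dg$ by uniqueness; and $\overline{Dg}$ is a restriction idempotent with $\overline{Dg} \leqslant 1_{GA} = G1_A$ and $H\overline{Dg} = \overline{Kg} = K\bar g$, while $\bar g \leqslant L1_A$, so $D\bar g = \overline{Dg}$ and $D$ is a restriction functor. Uniqueness of $D$ falls out of the same bijection: a second filler $D'$ agrees with $D$ on objects and, for $g \leqslant Lf$, satisfies $D'g \leqslant D'Lf = Gf$ and $HD'g = Kg$, hence $D'g = Dg$.

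The main obstacle is the cornerstone bijection. The delicate points are that hyperconnectedness, stated as a bare bijection of restriction-idempotent sets, genuinely upgrades to an order-isomorphism --- so that the preimage of an idempotent below $\overline{HGf}$ lies below $\overline{Gf}$ --- and that (R1) and (R3) correctly match the hom-down-set $\set{h}{h \leqslant Gf}$ with a principal down-set of $\O(GA)$. Once this is secured, the construction of $D$ and the verification of all its properties are routine bookkeeping orchestrated by the uniqueness property.
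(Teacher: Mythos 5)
Your proof is correct, and it constructs the very same filler as the paper's, but it is organized around a different key lemma. The paper defines the diagonal explicitly --- in its notation, $Jf = Hh\cdot e$, where $e$ is the unique restriction idempotent lifting $\overline{Kf}$ through the hyperconnected functor and $h$ is any map with $f \leqslant Fh$ --- and then verifies well-definedness, functoriality, compatibility with the two triangles and uniqueness by direct restriction-calculus computations (repeatedly using the identity $\overline{g\bar f}=\bar g\bar f$ and its consequences). You instead first prove that the hyperconnected functor restricts to a bijection between the down-sets $\{\,h : h \leqslant Gf\,\}$ and $\{\,h' : h' \leqslant HGf\,\}$ (your notation), by identifying each hom-down-set with a principal down-set of restriction idempotents via $h \mapsto \bar h$, $e \mapsto (Gf)e$, and upgrading the hyperconnectedness bijection to an order isomorphism because it is a bijective meet-semilattice homomorphism; with that in hand, the definition of the filler and every verification (independence of the chosen witness, functoriality, preservation of restrictions, commutativity of the triangles, uniqueness) become pure uniqueness-of-lift arguments. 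Your cornerstone is exactly the ``hyperconnected implies local discrete fibration'' direction of Proposition~\ref{prop:hyperdisc}, which the paper proves later by essentially your argument; so your route anticipates that result and makes the orthogonality proof nearly formal, at the cost of front-loading the lemma, whereas the paper's hands-on computation is self-contained at this point in the development and its explicit formula $Jf = Hh\cdot e$ is reused verbatim in the proof of Proposition~\ref{prop:codisccofib}. The delicate points you flag --- the order-isomorphism upgrade and the matching of hom-down-sets with principal down-sets via (R1) and (R3) --- are handled correctly, so there is no gap.
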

\begin{proof}
Given a commutative square in $\cat{rCat}$ as in
\begin{equation*}
\cd{
\C \ar[r]^H \ar[d]_F & \E \ar[d]^G \\
\D \ar[r]_K \ar@{.>}[ur]_J & \F 
}
\end{equation*}
with $F$ localic and $G$ hyperconnected, we must show that there is a unique filler $J$ as indicated making both triangles commute.
We do so at the level of objects by taking $JX = H\tilde X$, where $\tilde X$ is the unique object of $\C$ with $F\tilde X = X$.
On morphisms, given $f \colon X \to Y$ in $\D$, we note first that  $KX = KF\tilde X = GH\tilde X$, so that $\overline{Kf} \in \O(GH\tilde X)$; now since $G$ is hyperconnected, there is a unique $e \in \O(H \tilde X)$ with $Ge = \overline{Kf}$. Furthermore, since $F$ is localic, there exists a morphism $h \colon \tilde X \to \tilde Y$ in $\C$ with $f \leqslant Fh$, and we now define $Jf = Hh . e$. 

Note that $f \leqslant Fh$ implies $Kf \leqslant KFh$, whence $GJf = GHh.Ge =  KFh.\overline{Kf} = Kf$, showing that the lower-right triangle commutes.
Now $G(\overline{Jf}) = \overline{GJf} = \overline{Kf} = Ge$, whence by hyperconnectedness, $\overline{Jf} = e$.
It follows that the definition of $Jf$ is independent of the choice of $h$; for if $h'$ is another map with
$f \leqslant Fh'$, then by directedness, there exists $h'' \leqslant h, h'$ with $f \leqslant Fh''$, and by symmetry, it now suffices to show that $Hh.e = Hh''.e$. But both are $\leqslant Hh$, and both have the same restriction $e$, and so must coincide. 
It also follows that the upper triangle commutes; for when $f = Fg$ above, we have $e = \overline{Hg}$, and may take $h = g$, whence $JFg = Hg.\overline{Hg} = Hg$, as required.

We now show that $J$ is functorial. When $f = 1_X$ above, we have $e = 1_{JX}$, and may take $h = 1_{\tilde X}$; whence $J(1_X) = 1_{JX}$ as required. When $f = f_1. f_2$, with $Jf_1 = Hh_1.e_1$ and $Jf_2 = Hh_2.e_2$, say, then we have $J(f_1.f_2) = Hh_1.Hh_2.e$, where $Ge = \overline{Kf_1.Kf_2}$. But by~\cite[Lemma 2.1(iii)]{Cockett2002Restriction},  
$\overline{Kf_1.Kf_2} = \overline{\overline{Kf_1}.Kf_2} = \overline{Ge_1.Kf_2} = \overline{Ge_1.GJf_2} = G(\overline{e_1.Jf_2})$, whence $e = \overline{e_1.Jf_2}$ by hyperconnectedness. So now
$J(f_1.f_2) = Hh_1.Hh_2.e = Hh_1.Hh_2.\overline{e_1.Jf_2} = Hh_1.Hh_2.\overline{e_1.Hh_2.e_2} = Hh_1.Hh_2.\overline{e_1.Hh_2}.e_2 = Hh_1.e_1.Hh_2.e_2 = Jf_1.Jf_2$ as required.

Finally, we must verify that $J$ is the \emph{unique} diagonal filler for this square. Suppose that $J'$ were another such. Clearly $JX = J'X$ on objects; on morphisms, given $f \colon X \to Y$ in $\D$ as above, we choose $h$ with $f \leqslant Fh$, and now $Jf \leqslant JFh = Hh$ and $J'f \leqslant J'Fh = Hh$. But
$G(\overline{J'f}) = \overline{Kf} = Ge$ implies $\overline{J'f} = e = \overline{Jf}$ and so $Jf = Hh.\overline{Jf} = Hh.\overline{J'f} = J'f$, as required.
\end{proof}

\begin{Prop}\label{prop:loc-hyp-fact}
$\cat{rCat}$ admits (localic, hyperconnected) factorisations.
\end{Prop}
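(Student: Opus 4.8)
The plan is to exhibit, for an arbitrary restriction functor $F \colon \C \to \D$, an explicit factorisation $\C \xrightarrow{L} \E \xrightarrow{H} \D$ with $L$ localic and $H$ hyperconnected; since Proposition~\ref{prop:localic-hyper-orthogonal} already gives orthogonality of the two classes, this existence statement is all that is required. I would take $\E$ to have the same objects as $\C$, and to have as morphisms $A \to B$ the equivalence classes $[f,e]$ of pairs $(f,e)$ in which $f \colon A \to B$ lies in $\C$ and $e \in \O(FA)$ satisfies $e \leqslant \overline{Ff}$, where $(f,e)$ and $(f',e)$ are deemed equivalent exactly when $f$ and $f'$ admit a common lower bound $h$ in $\C(A,B)$ with $\overline{Fh} \geqslant e$. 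Composition is $[g,e'] \cdot [f,e] = [\,gf,\ e \wedge \overline{e' \cdot Ff}\,]$, the identity on $A$ is $[1_A, 1_{FA}]$, and the restriction of $[f,e]$ is $[1_A, e]$. The two legs are $Lf = [f, \overline{Ff}]$ and $H[f,e] = Ff \cdot e$, the latter reading $e$ as an endomorphism of $FA$.

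The verifications then proceed as follows. The relation $\sim$ is patently reflexive and symmetric, and transitive because two maps lying below a common map are compatible and so possess a binary meet $h \wedge h' = h\overline{h'}$, whose restriction is $\overline{Fh}\wedge\overline{Fh'}$; this lets one splice two common lower bounds into one. Next, one shows that the composition rule descends to equivalence classes and that the constraint $e \leqslant \overline{Ff}$ is preserved under it, using that $\leqslant$ is stable under composition and restriction in $\D$ together with the identity $\overline{\bar g f} = \overline{gf}$. The associativity, unit and restriction laws for $\E$ then come down to routine identities among restriction idempotents of $\D$, all consequences of (R1)--(R4) there; for instance (R1) for $\E$ reads $[f,e]\cdot[1_A,e] = [f, e \wedge \overline{e}] = [f,e]$. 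One checks that $L$ and $H$ are restriction functors with $HL = F$ and that $L$ is bijective on objects; the only point requiring care here is well-definedness of $H$ on classes, which holds because $h \leqslant f$ forces $Ff \cdot \overline{Fh} = Fh$, so that $Ff \cdot e = Fh \cdot e$ whenever $e \leqslant \overline{Fh}$.

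It remains to establish the two defining properties. For $H$ hyperconnected, one observes that every restriction idempotent of $\E$ on $A$ equals its own restriction and so has the form $[1_A, e]$; that each $[1_A, e]$ is such an idempotent; that $[1_A, e] = [1_A, e']$ forces $e = e'$; and that $H$ sends $[1_A,e]$ to $e$ --- so $\O_\E(A) \to \O_\D(HA)$ is a bijection. For $L$ localic, bijectivity on objects is built in; so fix $[f,e] \colon A \to B$ in $\E$ and consider the poset $P$ of $h \in \C(A,B)$ with $[f,e] \leqslant Lh$. Unwinding the order relation in $\E$, membership of $h$ in $P$ amounts to $e \leqslant \overline{Fh}$ together with the existence of a common lower bound of $h$ and $f$ in $\C$ whose $F$-restriction dominates $e$. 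Then $f$ itself lies in $P$ (using $e \leqslant \overline{Ff}$), so $P$ is nonempty; and given $h_1, h_2 \in P$ with witnessing lower bounds $k_1 \leqslant f, h_1$ and $k_2 \leqslant f, h_2$, the meet $k = k_1 \wedge k_2$ exists since $k_1, k_2$ lie below the common map $f$, still lies below $f, h_1, h_2$, and still has $\overline{Fk} = \overline{Fk_1} \wedge \overline{Fk_2} \geqslant e$; so $k \in P$ is a common lower bound of $h_1$ and $h_2$, and $P$ is downwards-directed.

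The main obstacle is really arriving at the right definition of $\E$: once one sees that a morphism should record a $\C$-map together with a $\D$-restriction-idempotent beneath it, modulo the ``compatible common refinement'' relation, everything else is bookkeeping whose sole structural ingredient is the existence and good behaviour of binary meets of compatible maps in a restriction category. The two spots where this could go wrong are the transitivity of $\sim$ (and the compatibility of composition with it) and the passage from ``$P$ nonempty'' to ``$P$ downwards-directed'' in the localicness check; both are handled, as indicated, by the meet $h \wedge h' = h\overline{h'}$.
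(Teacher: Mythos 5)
Your construction is essentially the paper's own: your pairs $(f,e)$ with $e\leqslant\overline{Ff}$ correspond exactly (via $g=Ff\cdot e$, $e=\bar g$) to the paper's pairs $(f,g)$ with $g\leqslant Ff$, with the same equivalence relation, restriction, and the same two legs. The extra verifications you supply are precisely the details the paper declares ``straightforward,'' and they are correct.
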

\begin{proof}
We need to construct a factorisation 
\begin{equation*}
\C \xrightarrow H \E \xrightarrow K \D
\end{equation*}
for any restriction functor $F \colon \C \to \D$. We take the category $\E$ to have the same objects as $\C$, and morphisms $x \to y$ being equivalence classes of pairs $(f, g)$ where $f \colon x \to y$ in $\C$ and $g \colon Fx \to Fy$ in $\D$ with $g \leqslant Ff$; the equivalence relation relates $(f,g)$ and $(f',g)$ just when there exists a pair $(f'', g)$ with $f'' \leqslant f, f'$. The restriction of the equivalence class $[f, g]$ is $[\bar f, \bar g] = [1_x, \bar g]$; the functor $H$ is the identity on objects and on morphisms sends $f$ to $[f, Ff]$; whilst $K$ acts as $F$ does on objects, and on morphisms sends $[f,g]$ to $g$. The remaining details are straightforward.
\end{proof}

In fact, the localic and hyperconnected restriction functors enjoy a stronger orthogonality property than that described above, by virtue of the following result.
\begin{Prop}\label{prop:codisccofib}
Each localic morphism is a codiscrete cofibration in $\cat{rCat}$; which is to say that, whenever $\alpha \colon H \Rightarrow GF$ is a $2$-cell in  $\cat{rCat}$  with $F$ localic, there exists a unique $J \colon \D \to \E$ and $2$-cell $\beta \colon J \Rightarrow G$ with $JF = H$ and $\beta F = \alpha$, so that
\begin{equation*}
\cd{
\C \ar[d]_F \ar[r]^{H} \dtwocell[0.3]{dr}{\alpha} & \E\\
\D \ar[ur]_-{G} & {}
} =
\cd{\C \ar[d]_F \ar[r]^{H} \dtwocell[0.53]{dr}{\beta} & \E\rlap{ .}\\
\D   \ar@{..>}@/^0.5pc/[ur]^-{J} \ar@/_1pc/[ur]_-{G} & {}
}
\end{equation*}
\end{Prop}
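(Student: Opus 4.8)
The plan is to mimic the construction of the diagonal filler from the proof of Proposition \ref{prop:localic-hyper-orthogonal}, but now in the more general situation where the lower triangle is filled only up to a (total) natural transformation $\alpha$. First I would define $J$ on objects: since $F$ is localic, it is bijective on objects, so set $JX = H\tilde X$ where $\tilde X$ is the unique object with $F\tilde X = X$; then $JF = H$ holds on objects automatically, and the component of $\beta$ at $X$ must be $\beta_X = \alpha_{\tilde X} \colon JX = H\tilde X \to GX$ (this is forced by the requirement $\beta F = \alpha$, which already pins down $\beta$ entirely, giving half of the uniqueness). For $J$ on morphisms, given $f \colon X \to Y$ in $\D$, I would use localness of $F$ to pick $h \colon \tilde X \to \tilde Y$ in $\C$ with $f \leqslant Fh$, and then define $Jf$ to be the unique morphism $H\tilde X \to H\tilde Y$ that is $\leqslant Hh$ and has restriction $\overline{Jf}$ determined by naturality. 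Concretely, the square
\begin{equation*}
\cd{
JX \ar[r]^{Jf} \ar[d]_{\beta_X} & JY \ar[d]^{\beta_Y} \\
GX \ar[r]_{Gf} & GY
}
\end{equation*}
must commute, which forces $\overline{Jf} = \overline{\beta_X^{-1}} $-type data; more precisely $\beta_Y . Jf = Gf . \beta_X$, and since $Jf \leqslant Hh$ we get $Jf = Hh . \overline{Jf}$, so the content is to show $\overline{Jf}$ is well-defined and that such a $Jf$ exists and is unique. Because $\beta_X = \alpha_{\tilde X}$ is \emph{total}, we have $\overline{\beta_Y . Jf} = \overline{Jf}$, and $\overline{Gf . \beta_X} = \overline{\alpha_{\tilde X} . \overline{Gf}}$, so the naturality square together with (R4) determines $\overline{Jf}$ uniquely; set $Jf := Hh . \overline{Gf.\alpha_{\tilde X} \text{-transported-back}}$, the point being that this idempotent lives in $\O(H\tilde X)$ and hence corresponds to a restriction idempotent on $\tilde X$ only after we observe $H$ need not be hyperconnected — so instead we keep everything on the $\E$-side and just use that $\O(H\tilde X)$ is a meet-semilattice.

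The steps I would then carry out, in order, are: (1) \textbf{Independence of the choice of $h$}: if $h, h'$ both satisfy $f \leqslant Fh, Fh'$, use downward-directedness of $\{h'' : f \leqslant Fh''\}$ to find $h'' \leqslant h, h'$; since $Hh''.e \leqslant Hh.e$ and both have the same restriction $e := \overline{Jf}$, they coincide, exactly as in the orthogonality proof. (2) \textbf{Functoriality of $J$}: check $J(1_X) = 1_{JX}$ by taking $h = 1_{\tilde X}$, and $J(f_1 f_2) = Jf_1 . Jf_2$ by taking $h = h_1 h_2$ and computing the composite restriction using (R1)--(R4) and \cite[Lemma 2.1(iii)]{Cockett2002Restriction}, again paralleling the earlier argument with $Kf$ replaced by $Gf.\alpha_{(-)}$. (3) \textbf{$JF = H$}: for $f = Fg$, take $h = g$; then $Jf \leqslant Hg$ and $\overline{Jf} = \overline{Hg}$ (using that $\alpha$ is natural in $g$), so $Jf = Hg$. (4) \textbf{Naturality of $\beta$}: this is essentially built into the definition of $Jf$, but I would record it cleanly: $\beta_Y . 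Jf = \beta_Y . Hh . e = Gf . \beta_X$ follows from $f \leqslant Fh$ (so $Gf \leqslant G\beta$-transported...), i.e. from $\alpha_{\tilde Y} . Hh = GFh . \alpha_{\tilde X}$ (naturality of $\alpha$ at $h$) and the defining equation for $e$. (5) \textbf{$\beta F = \alpha$}: immediate since $\beta_{F\tilde X} = \beta_{FX'} = \alpha_{X'}$ by construction. (6) \textbf{Uniqueness of $(J,\beta)$}: $\beta$ is forced by $\beta F = \alpha$ and bijectivity-on-objects; and given $\beta$, any $J'$ with $J'F = H$ and naturality square for $\beta$ satisfies $J'f \leqslant Hh$ and $\overline{J'f} = e$ for the same reasons, hence $J'f = Hh.e = Jf$.

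The main obstacle I anticipate is \textbf{well-definedness of the restriction idempotent $e = \overline{Jf}$ and the verification that $Jf$ genuinely lies below $Hh$ with that restriction}. Unlike in Proposition \ref{prop:localic-hyper-orthogonal}, we cannot invoke hyperconnectedness of anything to transport idempotents, so I must argue directly: the candidate for $e$ is an idempotent on $H\tilde X$ extracted from $\overline{Gf . \alpha_{\tilde X}}$ via the hyperconnected factorisation $G$ — wait, $G$ is not assumed hyperconnected here either. Reading the statement again: the \emph{only} hypothesis is that $F$ is localic. So $e$ must be defined purely from $\D$-data pulled back along — there is no pullback. The resolution is that $\alpha$ being \emph{total} means $\alpha_{\tilde X}$ is a total map $H\tilde X \to GX$, so $\overline{Gf . \alpha_{\tilde X}} = \overline{\alpha_{\tilde X} . \overline{Gf}}$, and then $\alpha$'s naturality lets us rewrite $\overline{Gf}$: there is some $k$ in $\C$ with... no. The honest point is that $e$ should be defined as $\overline{Hh}$ composed with the pullback of $\overline{Gf}$ — and the only way $\overline{Gf}$ yields something on $H\tilde X$ is if $G$ reflects restriction idempotents, which it need not. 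Therefore I expect the actual definition to be $Jf := Hh . e$ where $e \in \O(H\tilde X)$ is characterised by the \emph{requirement} $\beta_X . e$-type compatibility, and the real lemma to prove is that the naturality equation $\beta_Y . Jf = Gf . \beta_X$ has a unique solution $Jf$ among maps $\leqslant Hh$; existence will come down to showing that $Hh . \overline{(\text{something canonical})}$ works, and I would spend most of the effort isolating exactly which idempotent on $H\tilde X$ this is and checking it is independent of $h$. Once that single point is nailed, steps (1)--(6) are all variations on the computations already displayed in the proof of Proposition \ref{prop:localic-hyper-orthogonal}.
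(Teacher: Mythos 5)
Your overall architecture is the right one---define $J$ on objects through bijectivity of $F$ on objects, force $\beta_X = \alpha_{\tilde X}$, set $Jf = Hh.e$ for some chosen $h$ with $f \leqslant Fh$, and then re-run the computations of Proposition~\ref{prop:localic-hyper-orthogonal}---but the proposal never actually produces the idempotent $e$, and this is a genuine gap rather than a routine detail: you explicitly leave ``which idempotent on $H\tilde X$ this is'' as the unresolved main lemma, and your stated worry (that $\overline{Gf.\alpha_{\tilde X}}$ must be ``extracted via'' a hyperconnected functor, or that $\overline{Gf}$ only yields something on $H\tilde X$ if $G$ reflects restriction idempotents) rests on a misreading of where this restriction lives. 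The resolution is simpler than you fear: take $e \defeq \overline{Gf.\beta_X} = \overline{Gf.\alpha_{\tilde X}}$. The composite $Gf.\alpha_{\tilde X}$ is a morphism of $\E$ with domain $H\tilde X$ (since $\alpha_{\tilde X}\colon H\tilde X \to GX$ and $Gf \colon GX \to GY$), so its restriction already lies in $\O(H\tilde X)$; no transport along $G$ or $H$ is needed, and no hyperconnectedness hypothesis appears anywhere. One then checks $e \leqslant \overline{Hh}$ via $\overline{Gf.\beta_X} \leqslant \overline{GFh.\beta_X} = \overline{\beta_Y.Hh} = \overline{\overline{\beta_Y}.Hh} = \overline{Hh}$ (naturality and totality of $\alpha$, plus the identity $\overline{gk}=\overline{\bar g k}$), whence $\overline{Jf} = e$; after that, your steps (1)--(3) go through exactly as in Proposition~\ref{prop:localic-hyper-orthogonal}. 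The explicit form of $e$ is also what drives your step (4): $\beta_Y.Jf = \alpha_{\tilde Y}.Hh.e = GFh.\alpha_{\tilde X}.\overline{Gf.\beta_X} = GFh.\overline{Gf}.\beta_X = Gf.\beta_X$, using (R4) and then $Gf \leqslant GFh$.

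A second, smaller repair concerns your uniqueness step (6): the phrase ``for the same reasons'' conceals the fact that in Proposition~\ref{prop:localic-hyper-orthogonal} the equality $\overline{J'f} = e$ came from hyperconnectedness of $G$, which is not available here. Instead, note that $\beta'$ is forced to have the same components $\beta_X = \alpha_{\tilde X}$, and its naturality with respect to $J'$ gives $\beta_Y.J'f = Gf.\beta_X$; totality of $\beta_Y$ then yields $\overline{J'f} = \overline{\overline{\beta_Y}.J'f} = \overline{\beta_Y.J'f} = \overline{Gf.\beta_X} = e$, while $J'f \leqslant J'Fh = Hh$ because restriction functors preserve the order, so $J'f = Hh.e = Jf$. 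With these two points supplied, your plan coincides with the paper's proof.
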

\begin{proof}
On objects, we define $JX = H\tilde X$ where, as before, $\tilde X$ is the unique object of $\C$ with $F\tilde X = X$. We take $\beta$ to have components $\beta_X = \alpha_{\tilde X} \colon JX = H\tilde X \to GF \tilde X = GX$. To define $J$ on morphisms, given $f \colon X \to Y$ in $\D$, we let $e = \overline{Gf.\beta_X}$, choose some $h \colon \tilde X \to \tilde Y$ with $f \leqslant Fh$, and now define $Jf = Hh.e$. Note that $\overline{Gf.\beta_X} \leqslant \overline{GFh.\beta_X} = \overline{\beta_Y.Hh} = \overline{\overline{\beta_Y}.Hh} = \overline{Hh}$ (using again~\cite[Lemma 2.1(iii)]{Cockett2002Restriction}), whence $\overline{Jf} = e$. 
It follows as in the proof of Proposition~\ref{prop:localic-hyper-orthogonal} that the definition of $J$ is independent of the choice of $h$, that $JF = H$, and that $J$ is a functor. Clearly $\beta F = \alpha$, and it only remains to show that $\beta$ is in fact natural in $f$. Given $f$ as above, we have $Jf = Hh.e$ as before. But now 
$\beta_Y.Jf = \alpha_{\tilde Y}.Hh.e = GFh.\alpha_{\tilde X}.e = GFh.\beta_X.\overline{Gf.\beta_X} = GFh.\overline{Gf}.\beta_X = Gf.\beta_X$ as required.
\end{proof}
We thus obtain the following ``enhanced'' orthogonality property of localic and hyperconnected morphisms.
\begin{Cor}\label{cor:enhanced}
Given a 2-cell $\alpha: G H \Rightarrow K F$ in $\cat{rCat}$ with $F$ localic and $G$ hyperconnected, there is a unique $J \colon \D \to \E$ and $\beta \colon GJ \Rightarrow K$ with $JF = H$ and $\beta F = \alpha$, so that
\begin{equation*}
\cd{
\C \ar[r]^H \ar[d]_F \dtwocell{dr}{\alpha} & \E \ar[d]^G \\
\D \ar[r]_K  & \F 
} \quad = \quad 
\cd{
\C \ar[r]^H \ar[d]_F \dtwocell[0.66]{dr}{\beta} & \E \ar[d]^G \\
\D \ar[r]_K \ar@{..>}[ur]^J & \F \rlap{ .}
}
\end{equation*}
\end{Cor}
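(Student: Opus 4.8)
The plan is to deduce this purely formally from the two preceding propositions, with no further computation. The first step is to read the $2$-cell $\alpha \colon GH \Rightarrow KF$ as a $2$-cell with source the composite $GH \colon \C \to \F$ and target $K \circ F$. Since $F$ is localic, Proposition~\ref{prop:codisccofib} applies --- with the functors called $G$ and $H$ there instantiated as $K$ and $GH$ here --- and produces a unique functor $J_0 \colon \D \to \F$ together with a $2$-cell $\beta_0 \colon J_0 \Rightarrow K$ satisfying $J_0 F = GH$ and $\beta_0 F = \alpha$.

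The second step is to feed $J_0$ into the ordinary orthogonality of Proposition~\ref{prop:localic-hyper-orthogonal}. The square
\begin{equation*}
\cd{
\C \ar[r]^H \ar[d]_F & \E \ar[d]^G \\
\D \ar[r]_{J_0} & \F
}
\end{equation*}
commutes on the nose, precisely because $J_0 F = GH$; as $F$ is localic and $G$ is hyperconnected, Proposition~\ref{prop:localic-hyper-orthogonal} yields a unique diagonal filler $J \colon \D \to \E$ with $JF = H$ and $GJ = J_0$. Putting $\beta \defeq \beta_0 \colon GJ = J_0 \Rightarrow K$ then gives a pair $(J, \beta)$ with $JF = H$ and $\beta F = \beta_0 F = \alpha$, which is the existence half of the statement.

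For uniqueness, suppose $(J', \beta')$ is another pair with $J' \colon \D \to \E$, $\beta' \colon GJ' \Rightarrow K$, $J'F = H$ and $\beta' F = \alpha$. Then $GJ' \colon \D \to \F$ equipped with $\beta' \colon GJ' \Rightarrow K$ satisfies $(GJ')F = G(J'F) = GH$ and $\beta' F = \alpha$, so the uniqueness assertion in Proposition~\ref{prop:codisccofib} forces $GJ' = J_0$ and $\beta' = \beta_0 = \beta$. But then $J'$ is a diagonal filler of the square displayed above, so $J' = J$ by the uniqueness assertion in Proposition~\ref{prop:localic-hyper-orthogonal}, and we are done.

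I do not expect a genuine obstacle: the argument is entirely diagrammatic. The only points needing care are the bookkeeping of which functor plays which role in each invocation, and checking that the square above commutes strictly (which it does, by the defining equation $J_0 F = GH$) so that Proposition~\ref{prop:localic-hyper-orthogonal} may legitimately be applied. Should one prefer a self-contained argument, one can instead rerun the construction in the proof of Proposition~\ref{prop:codisccofib} verbatim, inserting a single extra appeal to hyperconnectedness of $G$ to transport the relevant restriction idempotent along the isomorphism $\O(H\tilde X) \cong \O(GH\tilde X)$; but the modular derivation above is shorter and makes the structure of the result transparent.
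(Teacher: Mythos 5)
Your proposal is correct and is exactly the paper's argument: the paper's proof of this corollary is literally ``Apply Propositions~\ref{prop:localic-hyper-orthogonal} and~\ref{prop:codisccofib}'', and your derivation spells out precisely that combination (first the codiscrete-cofibration property applied to $\alpha \colon GH \Rightarrow KF$, then strict orthogonality for the square $GH = J_0F$), including the uniqueness bookkeeping.
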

\begin{proof}
Apply Propositions~\ref{prop:localic-hyper-orthogonal} and~\ref{prop:codisccofib}.
\end{proof}


\section{The fundamental functor}\label{sec:fund}
The restriction category $\cat{Stab}^\op$ of Examples~\ref{exs:restriction}(vi) in fact plays a privileged role in the theory of restriction categories: a construction given in~\cite[\S4.1]{Cockett2002Restriction} shows that every restriction category $\C$ admits a canonical restriction functor $\o \colon \C \to \cat{Stab}^\op$, called the \emph{fundamental functor} of $\C$. The following proposition summarises the main points of the construction.
\begin{Prop}
Let $\C$ be a restriction category.
\begin{enumerate}[(i)]
\item For each $A \in \C$, the set $\o(A)$ of restriction idempotents on $A$ is a meet-semilattice, with top element $1_A \colon A \to A$ and meet $e \wedge e' = ee'$.
\item For each map $f \colon A \to B$ in $\C$, the function $\om f \colon \o(B) \to \o(A)$ given by  $e \mapsto \overline{ef}$ is a stable map of meet-semilattices.
\item The assignations $A \mapsto \o(A)$ and $f \mapsto \om f$ are the action on objects and morphisms of a functor $\o \colon \C \to \cat{Stab}^\op$, the \emph{fundamental functor} of~$\C$.
\item The fundamental functor is a restriction functor.
\end{enumerate}
\end{Prop}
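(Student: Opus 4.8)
The plan is to check the four clauses in sequence, using only the axioms (R1)--(R4) together with two of their elementary consequences: that $\overline{\bar f} = \bar f$ (so the restriction idempotents on $A$ are precisely the endomorphisms $e$ with $\bar e = e$), and that $\overline{\bar g f} = \overline{gf}$ for composable $g, f$; both follow from (R1), (R3), (R4), as in \cite[Lemma 2.1]{Cockett2002Restriction}.

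\emph{Clauses (i) and (ii).} For (i), I would first observe that $\o(A)$ is closed under composition: if $\bar e = e$ and $\bar{e'} = e'$ then $ee' = e'e$ by (R2), and $\overline{e'e} = \bar{e'}\bar e = e'e$ by (R3), so $ee' \in \o(A)$. This operation is associative, commutative by (R2), idempotent since $ee = e\bar e = e$ by (R1), and has $1_A$ as unit (with $\overline{1_A} = 1_A$, again by (R1)); a one-line argument with (R1) and (R3) then shows $ee'$ is the greatest lower bound of $e$ and $e'$ for the usual homset order, so $(\o(A), \cdot, 1_A)$ is a meet-semilattice with top $1_A$. For (ii), $\overline{ef} \in \o(A)$ because it is a restriction. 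Preservation of binary meets is the identity $\overline{ee'f} = \overline{ef}\cdot\overline{e'f}$, which I would obtain by rewriting $ee'f = ef\cdot\overline{e'f}$ via (R4) (valid since $\overline{e'} = e'$) and then stripping the restriction off the rightmost factor by (R3). Note that $\om f$ need not preserve the top, since $\om f(1_B) = \overline{1_B f} = \bar f$; this is precisely why $\om f$ lands among the stable maps rather than the finite-meet-preserving ones.

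\emph{Clauses (iii) and (iv).} Functoriality reduces to $\om{1_A} = \id$, immediate from (R1), together with $\om{(gf)} = \om f \circ \om g$; the latter is the computation $\overline{egf} = \overline{\overline{eg}\cdot f}$, which is the identity $\overline{\bar h k} = \overline{hk}$ with $h = eg$ and $k = f$. Since $\om{(\thg)}$ reverses the direction of composition, this is exactly covariant functoriality $\C \to \cat{Stab}^\op$. For the restriction-functor property (iv), unwinding the structure of $\cat{Stab}^\op$ from Examples~\ref{exs:restriction}(vi) shows that $\overline{\o(f)}$ is the stable endomap of $\o(A)$ sending $a \mapsto a \wedge \om f(1_B) = a\bar f$, while $\o(\bar f)$ sends $a \mapsto \overline{a\bar f}$, which equals $\bar a\bar f = a\bar f$ by (R3). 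Hence $\o(\bar f) = \overline{\o(f)}$, as required.

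The individual calculations are all short, so the only thing requiring real care is the bookkeeping around $\cat{Stab}^\op$: keeping the variance straight, and remembering that a stable map is permitted to send the top element to something strictly below it. That feature is not a nuisance but the crux of the matter — it is what lets clause (ii) hold without contradicting clause (iv), and what encodes, inside the semilattice structure, the passage from a map $f$ to its restriction $\bar f$.
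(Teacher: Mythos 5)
Your proof is correct: each of the four verifications (closure of $\O(A)$ under composition and the meet computation via (R1)--(R3), meet-preservation of $e \mapsto \overline{ef}$ via (R4) then (R3), functoriality via $\overline{\bar h k} = \overline{hk}$, and the identification of $\overline{\o(f)}$ with $\o(\bar f)$ using the restriction structure of $\cat{Stab}^\op$) goes through as you state. The paper itself offers no proof, deferring to the construction in \cite[\S 4.1]{Cockett2002Restriction}, and your direct verification from the axioms together with the standard consequences $\overline{\bar f} = \bar f$ and $\overline{\bar g f} = \overline{gf}$ is exactly the argument intended there.
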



\begin{Prop}\label{prop:fundterm}
The fundamental functor $\o \colon \C \to \cat{Stab}^\op$ of a restriction category $\C$ is a terminal object of $\cat{rCat}(\C, \cat{Stab}^\op)$.
\end{Prop}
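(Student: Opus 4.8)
The plan is to show that for any restriction functor $G \colon \C \to \cat{Stab}^\op$, there is a unique total natural transformation $G \Rightarrow \o$. The key structural fact to exploit is that $\o(A)$ is, by the preceding proposition, isomorphic to $\O(A)$ itself, and that a stable map $f \colon A \leftarrow B$ in $\cat{Stab}$ is completely determined by its values via the restriction formula $\bar f(a) = a \wedge f(\top)$; more precisely, writing things out in $\cat{Stab}^\op$, the action of a morphism on restriction idempotents encodes all the data. So first I would unwind what a $2$-cell in $\cat{rCat}$ landing in $\cat{Stab}^\op$ amounts to: a total natural transformation $\gamma \colon G \Rightarrow \o$ has components $\gamma_A \colon GA \to \o(A)$ in $\cat{Stab}^\op$, i.e.\ stable maps $\o(A) \to GA$ in $\cat{Stab}$, which are total, meaning $\bar\gamma_A = 1$, i.e.\ $\gamma_A(\top) = \top$, i.e.\ $\gamma_A$ preserves the top element and hence is a genuine meet-semilattice homomorphism.

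Next I would pin down the candidate. Since $G$ is a restriction functor, it induces for each $A$ a monotone map $\O(A) \to \O(GA) \cong GA$ (the last iso being the content of Examples~\ref{exs:restriction}(vi), where $\O$ of a semilattice is the semilattice itself), sending $e \mapsto Ge$; because $G$ preserves the restriction structure, this map preserves finite meets (meets of restriction idempotents being composites, which $G$ preserves) and the top element $1_A$. Transporting along $\o(A) \cong \O(A)$, this gives a meet-semilattice homomorphism $\o(A) \to GA$, equivalently a total morphism $\gamma_A \colon GA \to \o(A)$ in $\cat{Stab}^\op$. I would then check naturality: for $f \colon A \to B$ in $\C$, naturality of $\gamma$ says $\gamma_A \cdot Gf = \o f \cdot \gamma_B$ in $\cat{Stab}^\op$, which translates in $\cat{Stab}$ to the claim that, for each $e \in \o(B)$, the element $G(\overline{ef})$ of $GA$ equals $\overline{Ge \cdot Gf}$; and this is immediate from $G$ being a restriction functor, since $G(\overline{ef}) = \overline{G(ef)} = \overline{Ge\cdot Gf}$.

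For uniqueness, I would argue that any total $\gamma' \colon G \Rightarrow \o$ must agree with the $\gamma$ just constructed. The point is that naturality at the restriction idempotents forces the components: applying naturality of $\gamma'$ to a map $e \colon A \to A$ with $\bar e = e$ (viewing $\o$), one gets a relation that, combined with totality of $\gamma'_A$ (which makes it preserve $\top$), determines $\gamma'_A(e)$ on each $e \in \o(A)$ in terms of $Ge$; since every element of $\o(A)$ is such an $e$, this fixes $\gamma'_A$ entirely, and it must coincide with $\gamma_A$.

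The main obstacle is bookkeeping: one must be careful and consistent about the op-duality, keeping straight whether a given arrow is being read in $\cat{Stab}$ or in $\cat{Stab}^\op$, and about the identification $\o(A) \cong \O(GA) \cong GA$ used silently throughout. There is no deep difficulty---everything reduces to the restriction-functor axiom $G\bar f = \overline{Gf}$ and the explicit description of restriction idempotents in $\cat{Stab}^\op$---but the proof lives or dies on getting these translations right, so I would take care to fix notation at the outset and verify each naturality and functoriality condition by a direct computation in $\cat{Stab}$ rather than in $\cat{Stab}^\op$.
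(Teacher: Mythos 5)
Your proposal is correct and follows essentially the same route as the paper: the candidate component is exactly $\gamma_A(e) = Ge(\top)$ (your $e \mapsto Ge$ transported along $\O(GA) \cong GA$), naturality reduces to $G(\overline{ef}) = \overline{Ge\cdot Gf}$, and uniqueness comes from evaluating the naturality square at a restriction idempotent at $\top$ together with totality of the components, which is precisely the paper's argument.
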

\begin{proof}
We construct, for each restriction functor $F \colon \C \to \cat{Stab}^\op$,  a total natural transformation $\gamma \colon F \to \o$. Its component $\gamma_A \colon FA \leftarrow \o(A)$ is given by $\gamma_A(e) = Fe(\top)$; this is top-preserving, since $\gamma_A(\top) = (F1_A)(\top) = \top$, and binary-meet-preserving, since $\gamma_A(e \wedge e') = \gamma_A(ee') = Fe'(Fe(\top)) = Fe'(\top) \wedge Fe(\top)$, the last equality holding because $Fe'$ is a restriction idempotent. To show naturality, let $f \colon B \to A$ in $\C$; then $Ff(\gamma_A(e)) = Ff(Fe(\top)) = \overline{Ff.Fe}(\top) = F(\overline{ef})(\top) = \gamma_B(\om f e)$ as required. To show the uniqueness of $\gamma$, observe that for any $\gamma \colon F \to \O$ and any  restriction idempotent $e \colon A \to A$ in $\C$, we have a naturality square as on the left in:
\begin{equation*}
\cd{
\o(A) \ar[d]_{\om e} \ar[r]^{\gamma_A} & F(A) \ar[d]^{Fe} & & \top \ar@{|->}[d] \ar@{|->}[r] & \top \ar@{|->}[d]\\
\o(A) \ar[r]_{\gamma_A} & F(A) & & e \ar@{|->}[r] & \gamma_A(e) = Fe(\top)\rlap{ .}
}
\end{equation*}
Evaluating at $\top \in \o(A)$, and observing that $\gamma_A(\top) = \top$, as $\gamma$ is total, we obtain the square on the right, which shows that necessarily $\gamma_A = Fe(\top)$. \end{proof}

By the preceding proposition, for any restriction functor $F \colon \C \to \D$, there is a unique total natural transformation $\varphi$ fitting into a triangle
\begin{equation}\label{eq:gammatot}
\cd{
\C \ar[rr]^F \ar[dr]_\o & \ltwocell[0.4]{d}{\varphi} & \D \rlap{ .}\ar[dl]^\o \\
& \cat{Stab}^\op
}
\end{equation}
The components of $\varphi$ are the mappings $\o(A) \to \o(FA)$ sending $e$ to $Fe$, so that $\varphi$ is invertible just when $F$ is a hyperconnected restriction functor.

\begin{Prop}
Let $\C$ be a restriction category, and $F \colon \C \to \cat{Stab}^\op$ a restriction functor. Then the following are equivalent:
\begin{enumerate}[(i)]
\item $F$ is hyperconnected;
\item $F$ is a terminal object in the category $\cat{rCat}(\C, \cat{Stab}^\op)$;
\item $F$ admits a (necessarily unique) isomorphism to the fundamental functor of $\C$.
\end{enumerate}
\end{Prop}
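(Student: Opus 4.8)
The plan is to prove the three implications $(iii)\Rightarrow(ii)\Rightarrow(i)\Rightarrow(iii)$, exploiting the machinery already set up. The implication $(iii)\Rightarrow(ii)$ is immediate: by Proposition~\ref{prop:fundterm} the fundamental functor $\o$ is terminal in $\cat{rCat}(\C,\cat{Stab}^\op)$, and terminality is invariant under isomorphism of objects, so any $F$ isomorphic to $\o$ is itself terminal. The implication $(ii)\Rightarrow(i)$ will use the triangle~\eqref{eq:gammatot}: if $F$ is terminal, then the canonical total transformation $\varphi\colon \o \circ F \Rightarrow \o$ (with components $e \mapsto Fe$) must be an isomorphism, since both $F$ and $\o$ are terminal objects of the same category and hence uniquely isomorphic, and any isomorphism between them in $\cat{rCat}(\C,\cat{Stab}^\op)$ is a total natural isomorphism whose naturality square at the restriction idempotents forces it to agree with $\varphi$ (by the same evaluation-at-$\top$ argument used in the proof of Proposition~\ref{prop:fundterm}). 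Once $\varphi$ is invertible, the remark following~\eqref{eq:gammatot} tells us precisely that $F$ is hyperconnected.

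For $(i)\Rightarrow(iii)$, suppose $F$ is hyperconnected. Then again by the remark after~\eqref{eq:gammatot}, the canonical total transformation $\varphi \colon \o\circ F \Rightarrow \o$ has invertible components, so it is an isomorphism in $\cat{rCat}(\C,\cat{Stab}^\op)$. But wait --- $\varphi$ as written goes from $\o \circ F$ (the functor $A \mapsto \o(FA)$) to $\o$, not from $F$ to $\o$; to get an isomorphism $F \cong \o$ I need to identify $\o(FA)$ with $FA$. This is exactly the content of the final clause of Examples~\ref{exs:restriction}(vi) and the discussion after it: for $\C = \cat{Stab}^\op$, the restriction idempotent lattice $\o(A)$ is canonically isomorphic to $A$ itself, via $e \mapsto e(\top)$, and this isomorphism is natural in $A$. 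Composing this natural isomorphism $\o \circ \o \cong \o$ (applied with the fundamental functor of $\cat{Stab}^\op$) appropriately with $\varphi$ then yields the desired isomorphism $F \cong \o$ in $\cat{rCat}(\C,\cat{Stab}^\op)$. Its uniqueness follows because $\o$ is terminal, so there is a unique map $F \to \o$, and it is then automatically the unique isomorphism.

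The main obstacle I anticipate is bookkeeping around the several different instances of the $\o(\thg)$ construction and making the naturality of the comparison $\o(A) \cong A$ on $\cat{Stab}^\op$ fully explicit, so that the composite with $\varphi$ is genuinely a morphism in $\cat{rCat}(\C,\cat{Stab}^\op)$ rather than merely a levelwise bijection. In particular one should check that $e \mapsto e(\top)$ is a stable map and that it commutes with the action of $\om f$ on both sides --- this is a short computation of the same flavour as those in the proof of Proposition~\ref{prop:fundterm}. Once that is in hand, the three implications close the cycle and the ``necessarily unique'' parenthetical in~(iii) is subsumed by the terminality of~$\o$.
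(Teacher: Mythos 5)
Your proposal is correct and follows essentially the same route as the paper: both rest on the terminality of the fundamental functor (Proposition~\ref{prop:fundterm}) and on factoring the unique total transformation $\gamma\colon F \to \o$ through $\varphi$ and the invertible comparison $\o(X)\cong X$ on $\cat{Stab}^\op$ given by evaluation at $\top$ (the paper's $\delta$), the only difference being that you arrange a cycle of implications while the paper notes that (ii) and (iii) each amount to invertibility of $\gamma$, which transfers to $\varphi$ because $\delta$ is invertible. Just phrase the step in (ii)$\Rightarrow$(i) carefully: the unique isomorphism $F\cong\o$ is $\gamma$, not $\varphi$ itself, and only after composing with the evaluation isomorphism $\o(FA)\cong FA$ --- exactly the identification you invoke in (i)$\Rightarrow$(iii) --- does its invertibility yield that of $\varphi$.
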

\begin{proof}
By the preceding result, the fundamental functor $\o$ is a terminal object of $\cat{rCat}(\C, \cat{Stab}^\op)$ and so there is a unique total transformation $\gamma \colon F \to \o$; asking this to be invertible is equivalent  both to (ii) and to (iii). To show that it is also equivalent to (i), we  decompose $\gamma$ as a composite  $2$-cell
\begin{equation*}
\cd[@!]{
\C \ar[rr]^F \ar[dr]_\o \ltwocell[0.44]{drr}{\varphi} &  \ltwocell[0.57]{dr}{\delta} & \cat{Stab}^\op  \ar@/_0.5em/[dl]_(0.44){\o} \ar@/^2em/[dl]^{\id} \\
& \cat{Stab}^\op & {}
}
\end{equation*}
where $\varphi$ is as above, and $\delta$ is obtained using terminality of the fundamental functor of $\cat{Stab}^\op$. For any $X \in \cat{Stab}^\op$, the component $\delta_X \colon X \leftarrow \o(X)$ is the map sending $\phi$ to $\phi(\top)$, and this is invertible, with inverse $\o(X) \leftarrow X$ sending $e$ to $e \wedge (\thg)$. Thus $\delta$ is invertible, from which it follows that $\gamma$ is invertible just when $\varphi$ is---which is to say, just when $F$ is hyperconnected. 
\end{proof}
The following result gives our first reformulation of the notion of restriction category; in and of itself it is of scant interest, but it prepares the ground for our second reformulation in the following section.
\begin{Thm}\label{thm:equiv1}
The $2$-category $\cat{rCat}$ of restriction categories is $2$-equivalent to the $2$-category $\cat{rCat}'$ whose objects are hyperconnected restriction functors $F \colon \C \to \cat{Stab}^\op$, whose morphisms are diagrams
\begin{equation*}
\cd[@!C]{
\C \ar[rr]^H \ar[dr]_F & \ltwocell[0.4]{d}{\gamma} & \D\ar[dl]^{G} \\
& \cat{Stab}^\op
}
\end{equation*}
in $\cat{rCat}$, and whose $2$-cells $(H, \gamma) \to (H', \gamma')$ are total natural transformations $\theta \colon H \to H'$ with $\gamma'.G\theta = \gamma$.
\end{Thm}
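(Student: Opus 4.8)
The plan is to construct a 2-functor $\Phi \colon \cat{rCat} \to \cat{rCat}'$ and show it is an equivalence. On objects, $\Phi$ sends a restriction category $\C$ to its fundamental functor $\o \colon \C \to \cat{Stab}^\op$; this lands in $\cat{rCat}'$ by the proposition preceding the theorem. On a restriction functor $H \colon \C \to \D$, we use the canonical comparison 2-cell $\varphi$ of diagram \eqref{eq:gammatot}, obtained from the terminality of the fundamental functor of $\D$ in $\cat{rCat}(\D, \cat{Stab}^\op)$; so $\Phi(H) = (H, \varphi)$. On a total natural transformation $\theta \colon H \to H'$, we set $\Phi(\theta) = \theta$; the required compatibility $\varphi'.G\theta = \varphi$ follows from the uniqueness half of terminality, since both $\varphi$ and $\varphi'.G\theta$ are total transformations $\o_\C \To \o_\D \circ H$ (here $G = \o_\D$), so they coincide. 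Functoriality of $\Phi$ in the various senses (preservation of identities, of composites of 1-cells and of 2-cells) is again forced by the uniqueness in Proposition~\ref{prop:fundterm}: any two total 2-cells with the same (co)domain into a fundamental functor agree.

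To see that $\Phi$ is a 2-equivalence, I would check that it is surjective on objects up to equivalence, and locally an equivalence of hom-categories (i.e. fully faithful on 2-cells and essentially surjective on 1-cells). For surjectivity on objects: given an object $F \colon \C \to \cat{Stab}^\op$ of $\cat{rCat}'$, i.e. a hyperconnected restriction functor, the proposition immediately preceding the theorem gives an isomorphism $F \cong \o_\C$ in $\cat{rCat}(\C, \cat{Stab}^\op)$, and one checks this isomorphism is an equivalence $F \simeq \Phi(\C)$ in $\cat{rCat}'$. For local fully-faithfulness on 2-cells: a 2-cell $\Phi(\C) \to \Phi(\D)$ lying over 1-cells $\Phi(H), \Phi(H')$ is by definition a total transformation $\theta \colon H \to H'$ with $\varphi'.\o_\D\theta = \varphi$; but as noted, the latter equation is automatic, so the 2-cells of $\cat{rCat}'$ between such objects are exactly the total transformations $H \To H'$, matching those of $\cat{rCat}$ exactly — giving a bijection, hence a (strict) isomorphism of 2-cell sets.

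The main work is local essential surjectivity on 1-cells: given a morphism $(H, \gamma) \colon \Phi(\C) \to \Phi(\D)$ in $\cat{rCat}'$, with $H \colon \C \to \D$ a restriction functor and $\gamma \colon \o_\D \circ H \To \o_\C$ a total transformation satisfying nothing beyond being a 2-cell of $\cat{rCat}$ (there is no equational constraint in the definition of 1-cell of $\cat{rCat}'$), I must show $(H, \gamma)$ is isomorphic in the hom-category to $\Phi(H) = (H, \varphi)$. But $\varphi$ is the canonical comparison 2-cell, and by terminality of $\o_\D$ there is a unique total $\gamma \colon \o_\D H \To \o_\C$; since $\varphi$ is one such, we must have $\gamma = \varphi$ on the nose. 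Hence every 1-cell of $\cat{rCat}'$ out of an object in the image of $\Phi$ is literally of the form $\Phi(H)$, so local essential surjectivity is immediate (indeed $\Phi$ is locally an isomorphism of categories on these hom-categories). Combining surjectivity on objects up to equivalence with these local isomorphisms yields the 2-equivalence.

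The step I expect to require the most care is verifying that $\Phi$ is genuinely a 2-functor — i.e., that $\varphi_{H'H} = \varphi_{H'} \circ (\o_{\D}\text{-whiskering of }\varphi_H)$ and similarly for whiskerings of 2-cells — but even this reduces, in each case, to observing that both sides are total transformations into a fundamental functor with the same source, hence equal by Proposition~\ref{prop:fundterm}. So the only real content of the proof is organisational: setting up $\Phi$ and repeatedly invoking terminality of the fundamental functor to collapse all coherence conditions.
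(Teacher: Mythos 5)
Correct, and essentially the paper's own argument run in reverse: the paper simply notes that the forgetful $2$-functor $U \colon \cat{rCat}' \to \cat{rCat}$ is $2$-fully faithful (because a hyperconnected functor is terminal in its hom-category, so the comparison $2$-cells and the equations on $2$-cells are uniquely determined/automatic) and surjective on objects (via the fundamental functor), whereas you assemble the pseudo-inverse explicitly, with every coherence check collapsing by that same terminality. One slip to fix in your write-up: the comparison $2$-cell of~\eqref{eq:gammatot} goes $\varphi \colon \o_\D H \To \o_\C$ and comes from terminality of the fundamental functor of the \emph{domain} $\C$ in $\cat{rCat}(\C,\cat{Stab}^\op)$ (not of $\D$ in $\cat{rCat}(\D,\cat{Stab}^\op)$); with that orientation the equation $\gamma'.G\theta = \gamma$ typechecks, and your uniqueness arguments then apply verbatim.
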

\begin{proof}
There is an obvious forgetful $2$-functor $U \colon \cat{rCat}' \to \cat{rCat}$. It follows easily from the fact that a hyperconnected morphism is terminal in its hom-category that $U$ is $2$-fully faithful; it is moreover surjective on objects, since every restriction category admits a hyperconnected morphism to $\cat{Stab}^\op$, namely, its fundamental functor.
\end{proof}
\begin{Rk}
Note that $\cat{rCat}'$ is a full sub-$2$-category of the lax slice $2$-category $\cat{rCat}\sslash\cat{Stab}^\op$ whose $0$-cells are hyperconected functors.  Thus we may write $\cat{rCat}'= \cat{rCat}\sslash_{\!\hbar\,}\cat{Stab}^\op$.  This occasions various remarks:
\begin{enumerate}[(1)] 
\item  $\cat{rCat}'$ is, in fact, a reflective sub-$2$-category of the lax slice $2$-category. Given a restriction functor $\C \to \cat{Stab}^\op$, its reflection into $\cat{rCat}'$ is the hyperconnected part of its (localic, hyperconnected) factorisation, whilst given a lax triangle over $\cat{Stab}^\op$, its reflection into $\cat{rCat}'$ is obtained using the enhanced orthogonality property of Corollary~\ref{cor:enhanced}. 
\vskip0.25\baselineskip
\item  It also follows from  Corollary~\ref{cor:enhanced} that the lax slice $\cat{rCat}\sslash \cat{Stab}^\op$ is $2$-equivalent to the full sub-$2$-category of $\cat{rCat}^\rightarrow$ (the \emph{strict} arrow category) whose objects are the localic morphisms.
\item For any restriction category $\cat{Y}$ we can form $\cat{rCat}\sslash_{\!\hbar\,}\cat{Y}$ as a reflective sub-$2$-category of the lax slice $2$-category.  The objects of this 2-category are then restriction categories with a ``fundamental functor'' to $\cat{Y}$:  this means its lattices of restriction idempotents ``live'' in the restriction category $\cat{Y}$.  Thus, for example, if $\cat{Y} = \cat{Top}_p$ then each idempotent semilattice $\o(X)$ would be identified with the locale of open sets of a topological space and, furthermore, each map would have to behave like a partial continuous map on these open sets.  
\end{enumerate}
\end{Rk}

\section{Restriction categories as local discrete fibrations}\label{sec:3}

%
As observed in the previous section, each restriction category may be viewed as a locally partially ordered $2$-category, in such a way that every restriction functor respects these local partial orders. We therefore have a forgetful $2$-functor
$\Gamma \colon \cat{rCat} \to \cat{2\text-Cat}
$
from the $2$-category of restriction categories to the $2$-category of $2$-categories. In this section, we study the relationship between this forgetful functor and the notions introduced in the previous section.

We first describe a class of maps in $\cat{2\text-Cat}$ which correspond to the hyperconnected morphisms in $\cat{rCat}$. Recall that a functor $p \colon \E \to \B$ is called a \emph{discrete fibration} if, for every $e \in \E$ and map $\gamma \colon b \to pe$ in $\B$, there exists a unique map $\gamma'$ in $\E$ with $p(\gamma') = \gamma$. A $2$-functor $F \colon \K \to \L$ between $2$-categories is called a \emph{local discrete fibration} if each functor $\K(X,Y) \to \L(FX, FY)$ is a discrete fibration.

\begin{Prop}\label{prop:hyperdisc}
A restriction functor $F \colon \C \to \D$ is hyperconnected if and only if $\Gamma F  \colon \Gamma\C \to \Gamma\D$ is a local discrete fibration.
\end{Prop}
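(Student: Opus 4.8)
The plan is to unwind both sides of the equivalence into concrete statements about homsets and then verify it by elementary manipulations with the restriction axioms. Recall that $\Gamma\C$ has the same objects and $1$-cells as $\C$, with hom-category $\Gamma\C(A,B)$ the poset $(\C(A,B),\leqslant)$, and that (by the remarks in Section~\ref{sec:2}) $\Gamma F$ acts on each hom-category as the monotone function $F\colon \C(A,B) \to \D(FA,FB)$, so that $\Gamma F$ is a well-defined $2$-functor. A monotone map of posets $p\colon P\to Q$, viewed as a functor, is a discrete fibration precisely when for every $x\in P$ and every $y\leqslant p(x)$ in $Q$ there is a \emph{unique} $x'\leqslant x$ in $P$ with $p(x')=y$. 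Hence $\Gamma F$ is a local discrete fibration if and only if
\begin{equation*}
(\star)\qquad\text{for every } f\colon A\to B \text{ in }\C \text{ and every }g\leqslant Ff\text{ in }\D\text{, there is a unique }f'\leqslant f\text{ in }\C\text{ with }Ff'=g.
\end{equation*}
The remaining task is to prove that $(\star)$ holds if and only if each $\O(A)\to\O(FA)$, $e\mapsto Fe$, is a bijection.

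First I would show that hyperconnectedness implies $(\star)$. Given $g\leqslant Ff$ with $f\colon A\to B$, the map $g$ has domain $FA$, so $\bar g\in\O(FA)$; by surjectivity of $\O(A)\to\O(FA)$ choose $e\in\O(A)$ with $Fe=\bar g$, and set $f'\defeq fe$. Then $\overline{fe}=\overline{f\bar e}=\bar f\,\bar e=\bar f e$ by (R3), so $f\,\overline{fe}=f\bar f e=fe$ by (R1), i.e.\ $f'\leqslant f$; and $Ff'=Ff.Fe=Ff.\bar g=g$ since $g\leqslant Ff$. For uniqueness, if $f',f''\leqslant f$ both map to $g$, then $\overline{f'},\overline{f''}\in\O(A)$ with $F\overline{f'}=\overline{Ff'}=\bar g=F\overline{f''}$, so injectivity of $\O(A)\to\O(FA)$ gives $\overline{f'}=\overline{f''}$, whence $f'=f\overline{f'}=f\overline{f''}=f''$.

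Conversely, assuming $(\star)$, I would specialise it to $f=1_A$. Any $\epsilon\in\O(FA)$ satisfies $\epsilon\leqslant 1_{FA}=F1_A$, so there is a unique $f'\leqslant 1_A$ with $Ff'=\epsilon$; but $f'\leqslant 1_A$ unpacks to $\overline{f'}=1_A\cdot\overline{f'}=f'$, so $f'\in\O(A)$, giving surjectivity of $\O(A)\to\O(FA)$. Injectivity is immediate from the uniqueness clause of $(\star)$ at $f=1_A$, since any $e\in\O(A)$ with $Fe=\epsilon$ satisfies $e\leqslant 1_A$.

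I do not anticipate a genuine obstacle. The only real content is the translation of "local discrete fibration" into the elementary condition $(\star)$, together with the observation that a lift of a restriction idempotent along an identity $1_A$ is automatically again a restriction idempotent; the rest is routine bookkeeping with axioms (R1) and (R3). The one point worth stating with care is the characterisation of discrete fibrations between posets and the compatibility of $F$ with the local orders, which makes $\Gamma F$ a $2$-functor in the first place.
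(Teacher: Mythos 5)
Your proof is correct and follows essentially the same route as the paper's: lift the restriction idempotent $\bar g$ through the bijection $\O(A)\to\O(FA)$ and compose to get the lift $fe$, with uniqueness via $F\overline{f'}=F\overline{f''}$; conversely specialise the unique-lifting property at $f=1_A$ and observe the lift is automatically a restriction idempotent. The only difference is presentational: you spell out the poset-level characterisation $(\star)$ of local discrete fibrations, which the paper leaves implicit.
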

\begin{proof}
Suppose first that $F$ is hyperconnected. We must show for each $A,B \in \C$ that the functor
$(\Gamma F)_{A,B} \colon (\Gamma\C)(A,B) \to (\Gamma\D)(FA,FB)
$ is a discrete fibration. Thus given $g \in \C(A,B)$ and $k \leqslant Fg$ in $\D(FA,FB)$, we must show that there is a unique $f \leqslant g$ in $\C(A,B)$ with $Ff = k$. Because $F$ is hyperconnected, there exists a unique  $e \in \O(A)$ with $Fe = \overline k$. Now take $f = ge$; clearly we have $f \leqslant g$, and moreover $Ff = Fg.Fe = Fg.\overline k = k$. Finally, if $f' \leqslant g$ with $Ff' = k$, then from $F(\overline{f'}) = \overline{Ff'} = \overline k = Fe$ we deduce that $\overline{f'} = e$, and so that $f' = g \overline{f'} = ge = f$, as required.

Suppose conversely that $\Gamma F$ is a local discrete fibration. We must show that for each $A \in \C$, the induced mapping $\o (A) \to \o(FA)$ is an isomorphism. So let $e \colon FA \to FA$ be a restriction idempotent. We have $e \leqslant F(1_A)$ and so a unique $e' \colon A \to A$ with $e' \leqslant 1_A$ and $Fe' = e$. We have $e' = 1_A.\overline{e'} = \overline{e'}$, so that $e'$ is a restriction idempotent over $e$; moreover, if $e''$ is another restriction idempotent over $e$ then $e'' \leqslant 1_A$ and $Fe'' = e$ imply that $e'' = e$ by uniqueness of liftings.
\end{proof}
\begin{Rk}\label{ex:comprehensive}
The discrete fibrations are the right class of the \emph{comprehensive} factorisation system~\cite{Street1973The-comprehensive} on $\cat{Cat}$, whose corresponding left class comprises the \emph{final} functors. As both discrete fibrations and final functors are stable under finite products, the comprehensive factorisation induces on $\cat{2\text-Cat}$ a factorisation system (bijective on objects and locally final, local discrete fibration). In light of the preceding result, applying this factorisation to maps in the image of $\Gamma$ yields the (localic, hyperconnected) factorisations described in Proposition~\ref{prop:loc-hyp-fact} above.
\end{Rk}

The importance of local discrete fibrations is that they allow us to lift restriction structure from the codomain to the domain:
\begin{Prop}\label{prop:disclift}
Let $\D$ be a restriction category, and let $F \colon \C \to \Gamma\D$ be a local discrete fibration. Then there is a unique restriction functor $\hat F \colon \hat \C \to \D$ such that $\Gamma\hat \C = \C$ and $\Gamma\hat F = F$.
\end{Prop}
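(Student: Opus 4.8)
The plan is to \textbf{lift the restriction operation pointwise along $F$}, using the uniqueness of $2$-cell liftings. The first step is to record the key consequence of $F$ being a local discrete fibration: for each $1$-cell $g \colon A \to B$ of $\C$, the hom-functor $\C(A,B) \to (\Gamma\D)(FA,FB)$ restricts to an isomorphism of posets between $\{h \mid h \leqslant g\}$ and $\{k \mid k \leqslant Fg\}$ --- surjectivity and injectivity of the underlying map are precisely the existence and uniqueness halves of the discrete fibration condition applied to $2$-cells with target $g$ (injectivity uses that the homs of $\Gamma\D$ are posets, so the $2$-cells $h \Rightarrow g$ and $h' \Rightarrow g$ witnessing two candidate lifts lie over one and the same $2$-cell $Fh = Fh' \Rightarrow Fg$). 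In particular, two $1$-cells $h, h' \leqslant g$ in $\C$ with $Fh = Fh'$ must coincide.

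With this in hand I define $\hat\C$ to have the same underlying category as $\C$, and restriction operation as follows: given $f \colon A \to B$, the idempotent $\overline{Ff}$ of $\D$ satisfies $\overline{Ff} \leqslant 1_{FA} = F(1_A)$, so the discrete fibration provides a unique $e_f \colon A \to A$ in $\C$ with $e_f \leqslant 1_A$ and $Fe_f = \overline{Ff}$; put $\bar f \defeq e_f$. Since $F$ is a strict $2$-functor it preserves whiskering, so the local order of $\C$ is compatible with composition on both sides; this lets me transport each axiom from $\D$ to $\hat\C$ through the bijections of the first paragraph. For (R1), both $f\bar f$ and $f$ lie below $f$ --- the former because $\bar f = e_f \leqslant 1_A$ --- while $F(f\bar f) = Ff \cdot \overline{Ff} = Ff$ by (R1) in $\D$, so $f\bar f = f$ by injectivity; (R2), (R3) and (R4) go the same way, each time noting that the two sides lie below a common $1$-cell (respectively $1_A$, $1_A$ and $f$) obtained by whiskering $e_f \leqslant 1_A$ or $e_g \leqslant 1_B$, and have equal $F$-image by the matching axiom downstairs. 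The identity $Fe_f = \overline{Ff}$ simultaneously shows that $\hat F \colon \hat\C \to \D$, taken to be $F$ on objects and morphisms, is a restriction functor; and as $F$ is already a $2$-functor, $\Gamma\hat F = F$.

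It remains to check $\Gamma\hat\C = \C$, that is, that the order $f \leqslant g \iff g\bar f = f$ of $\hat\C$ recovers the given $2$-cells of $\C$. If there is a $2$-cell $f \Rightarrow g$ in $\C$ then $Ff \leqslant Fg$ in $\D$, hence $Fg \cdot \overline{Ff} = Ff$; as $g e_f \leqslant g$ (whiskering $e_f \leqslant 1_A$) and $F(g e_f) = Fg \cdot \overline{Ff} = Ff$, injectivity forces $g\bar f = g e_f = f$. Conversely, if $g\bar f = f$ in $\hat\C$, whiskering the $2$-cell $\bar f \Rightarrow 1_A$ of $\C$ by $g$ yields a $2$-cell $f = g\bar f \Rightarrow g$. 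For uniqueness, if $(\hat\C',\hat F')$ is another solution then $\hat\C'$ has underlying category $\C$ and its restriction sends $f$ to some $e'$ which, being a restriction idempotent, satisfies $e' \leqslant 1_A$ in $\C$, and $Fe' = \overline{Ff}$ since $\hat F'$ is a restriction functor over $F$; uniqueness of liftings then gives $e' = e_f$, so the structure --- and with it $\hat\C$ and $\hat F$ --- is forced.

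I expect the \textbf{main obstacle} to be arranging the verification of (R1)--(R4) so as to avoid circularity: one must not invoke the yet-unproved axioms of $\hat\C$ to establish facts such as $f\bar f \leqslant f$, so it is essential to extract the needed sub-order relations purely from whiskering in the $2$-category $\C$ (using only $e_f \leqslant 1_A$ and $e_g \leqslant 1_B$), and only then to invoke the down-set bijections together with the axioms \emph{as known to hold in $\D$}. Once this bookkeeping --- which common upper bound, which axiom downstairs --- is fixed, each axiom is a one-line check.
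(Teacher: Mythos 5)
Your proposal is correct and follows essentially the same route as the paper: define $\bar f$ as the unique lift of $\overline{Ff}\leqslant F(1_A)$, verify (R1)--(R4) by exhibiting a common upper bound via whiskering and invoking uniqueness of liftings together with the corresponding axiom in $\D$, recover the order of $\C$ from $g\bar f=f$, and force uniqueness of the structure by the same lifting argument. The bookkeeping you flag (which upper bound, which axiom downstairs) is exactly how the paper organises the verification, so no changes are needed.
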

\begin{proof}
First observe that, because $\Gamma\D$ is locally  partially ordered and $F$ is a local discrete fibration, $\C$ is also locally partially ordered. We take the underlying category of $\hat \C$ to be the underlying $1$-category of $\C$, and equip it with the following restriction structure. For each map $f \colon A \to B$ in $\C$, we have $\overline{Ff} \leqslant F(1_A) \in \D(FA,FA)$ and so, because $F$ is a local discrete fibration, have a unique $\bar f \leqslant 1_X$ in $\C(A,A)$ with $F(\bar f) = \overline{Ff}$. We now check the axioms (R1)--(R4).
\begin{enumerate}[(R1)]
\item $\bar f \leqslant 1_A$ implies $f\bar f \leqslant f$; and since $F(f \bar f) = Ff.F\bar f = Ff.\overline{Ff} = Ff$, we conclude by the uniqueness of liftings that $f\bar f = f$.\vskip0.5\baselineskip
\item $\bar f \leqslant 1_A$ and $\bar g \leqslant 1_A$ implies $\bar f \bar g \leqslant 1_A$ and  $\bar g \bar f \leqslant 1_A$. But $F(\bar g \bar f) = \overline{Fg}\,\overline{Ff} = \overline{Fg}\, \overline{Ff} = F(\bar f \bar g)$ and so by uniqueness of liftings, $\bar g \bar f = \bar f \bar g$.\vskip0.5\baselineskip
\item Now we have $\overline {g \bar f} \leqslant 1_A$ and $\bar g \bar f \leqslant 1_A$, but $F(\overline {g \bar f}) = \overline{Fg.\overline{Ff}} = \overline{Fg} . \overline{Ff} = F(\bar g \bar f)$, whence by uniqueness of liftings, $\overline{g \bar f} = \bar g \bar f$.\vskip0.5\baselineskip
\item Finally, we have $\bar g f \leqslant f$ and $f \overline{gf} \leqslant f$, but $F(\bar g f) = \overline{Ff}.Fg = Ff.\overline{Fg.Ff} = F(f \overline{gf})$, whence again by uniqueness of liftings, $\bar g f  = f \overline{gf}$.
\end{enumerate}
Thus $\hat \C$ is a restriction category as required. To show that $U\hat \C = \C$, we must check that $f \leqslant g$ in $\C(A,B)$ just when $g \bar f = f$. Now because $\bar f \leqslant 1_A$, we also have $g \bar f \leqslant g$, and so $g \bar f = f$ implies $f \leqslant g$. Conversely, if $f \leqslant g$, then also $Ff \leqslant Fg$, which is to say that $Fg .\overline{Ff} = Ff$. But now we have $f \leqslant g$ and also $g \bar f \leqslant g$, but $F(g \bar f) = Fg .\overline{Ff} = Ff$, whence, by uniqueness of liftings, $g \bar f = f$. Finally, it is clear that $F$ lifts to a restriction functor $\hat F \colon \hat \C \to \D$.

It remains to show that $\hat \C$ and $\hat F$ are unique over $\C$ and $F$. So suppose there is given some other restriction structure $f \mapsto \tilde f$ inducing the $2$-category structure of $\C$. Then for each $f \colon A \to B$, we have $\tilde f \leqslant 1_A$ and $\overline f \leqslant 1_A$; but now $F(\tilde f) = \overline {Ff} = F(\bar f)$ and so by uniqueness of liftings, we have $\tilde f = \overline f$. Thus $\hat \C$ is unique; the uniqueness of $\hat F$ is now immediate.
\end{proof}
The preceding two results allow us to reformulate the notion of hyperconnected restriction functor in purely $2$-categorical terms. The following result allows us to do similarly for the notion of total natural transformation.
\begin{Prop}\label{prop:disctotal}
Let $\C$ be a restriction category. A map $f \colon A \to B$ in $\C$ is total if and only if it is a discrete fibration in $\Gamma\C$.
\end{Prop}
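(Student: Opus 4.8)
The plan is to begin by unwinding what it means for the $1$-cell $f \colon A \to B$ to be a discrete fibration in the locally posetal $2$-category $\Gamma\C$. Representably, this asks that for every object $X$ the postcomposition functor $\Gamma\C(X,A) \to \Gamma\C(X,B)$ be a discrete fibration of categories; since these hom-categories are the posets $(\C(X,A),\leqslant)$ and $(\C(X,B),\leqslant)$, the condition becomes the concrete one: for every $g \colon X \to A$ and every $k \leqslant fg$ in $\C(X,B)$, there is a \emph{unique} $h \leqslant g$ in $\C(X,A)$ with $fh = k$. I would then prove that this holds precisely when $\bar f = 1_A$.

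For the ``only if'' direction, I would instantiate the above property at $X = A$, $g = 1_A$ and $k = f$ (which satisfies $k \leqslant fg$ since $fg = f$). Both $h = 1_A$ and $h = \bar f$ are then liftings of $f$ below $1_A$: the former trivially, the latter because $f\bar f = f$ by (R1) and $\overline{\bar f} = \bar f$, so that $\bar f \leqslant 1_A$. Uniqueness of liftings forces $\bar f = 1_A$, i.e. $f$ is total.

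For the converse, suppose $\bar f = 1_A$ and fix $g \colon X \to A$ and $k \leqslant fg$. For existence, take $h = g\bar k$: this is $\leqslant g$ (a map composed with a restriction idempotent on its domain is a restriction of it, by (R1) and (R3)), and $fh = (fg)\bar k = k$ precisely because $k \leqslant fg$ --- note this step uses no hypothesis on $f$. For uniqueness, given any $h'$ with $h' \leqslant g$ and $fh' = k$, I would take restrictions of $fh' = k$ and apply the identity $\overline{fh'} = \overline{\bar f h'}$ (Lemma~2.1(iii) of \cite{Cockett2002Restriction}); since $\bar f = 1_A$ this collapses to $\overline{h'} = \bar k$, and then $h' = g\,\overline{h'} = g\bar k = h$, as required.

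The calculations are all routine applications of (R1), (R3) and the cited lemma, and totality of $f$ is used at exactly one point: the step $\overline{fh'} = \overline{\bar f h'} = \overline{h'}$ that pins the lift down uniquely. The only part genuinely requiring care is the very first one --- fixing the precise (representable) sense in which a $1$-cell of $\Gamma\C$ is a ``discrete fibration'' --- after which everything reduces to the concrete lifting condition above.
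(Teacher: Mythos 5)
Your proof is correct and follows essentially the same route as the paper: unwind the representable definition of a discrete fibration in the locally posetal $2$-category $\Gamma\C$, extract totality from uniqueness of liftings of $f \leqslant f\cdot 1_A$ (the paper phrases this as ``composition with $f$ reflects identity $2$-cells''), and conversely take $h = g\bar k$ for existence and use $\overline{fh'} = \overline{\bar f h'} = \overline{h'}$ for uniqueness, exactly as in the paper's argument with $c = a\bar b$. No gaps; the two proofs differ only in presentation.
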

Recall that a map $f$ in a $2$-category $\K$ is called a \emph{discrete fibration} if $\K(X,f)$ is one in $\cat{Cat}$, for each $X \in \K$.
\begin{proof}
If $f \colon A \to B$ is a discrete fibration in $\Gamma \C$, then in particular, composition with it reflects identity $2$-cells; whence from $\bar f \leqslant 1_A$ and $f . \bar f  = f  = f. 1_A$ we deduce that $\bar f = 1_A$, so that $f$ is total. Suppose conversely that $f$ is total. We must show that for every $b \colon X \to B$ and $a \colon X \to A$ with $b \leqslant fa$, there exists a unique $c \leqslant a$ with $fc = b$. Because $b \leqslant fa$, we have $b = fa \overline b$, so that taking $c = a \overline b$, the above two conditions are clearly satisfied. To show uniqueness of $c$, suppose that $d \leqslant a$ with $fd = b$. Then we have $\overline b = \overline{fd} = \overline{\overline f d} = \overline{d}$ by totality of $f$ and~\cite[Lemma 2.1(iii)]{Cockett2002Restriction} so that $d = a \overline{d} = a \overline b = c$ as desired.
\end{proof}

Combining the above results, we have:
\begin{Thm}\label{thm:equiv2}
The $2$-category $\cat{rCat}$ of restriction categories is $2$-equivalent to the $2$-category $\cat{rCat}''$ whose objects are local discrete fibrations $F \colon \C \to \Gamma\cat{Stab}^\op$; whose $1$-cells are diagrams
\begin{equation*}
\cd[@!C@-0.5em]{
\C \ar[rr]^H \ar[dr]_F & \ltwocell[0.4]{d}{\gamma} & \D \ar[dl]^{G} \\
& \Gamma\cat{Stab}^\op
}
\end{equation*}
with $H$ a $2$-functor and $\gamma$ a $2$-natural transformation whose components are discrete fibrations; and whose $2$-cells $(H, \gamma) \to (H', \gamma')$ are $2$-natural transformations $\theta \colon H \to H'$ with $\gamma'.G\theta = \gamma$.
\end{Thm}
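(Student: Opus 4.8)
The plan is to deduce the result from Theorem~\ref{thm:equiv1}, which gives a $2$-equivalence $\cat{rCat} \simeq \cat{rCat}'$: I would show that the forgetful $2$-functor $\Gamma \colon \cat{rCat} \to \cat{2\text-Cat}$ restricts to an \emph{isomorphism} of $2$-categories $\cat{rCat}' \cong \cat{rCat}''$, and then compose the two. Morally $\cat{rCat}''$ is obtained from $\cat{rCat}'$ simply by applying $\Gamma$ to all the data, so the work is to check that $\Gamma$ carries hyperconnected functors, lax triangles and total transformations bijectively onto local discrete fibrations, their triangles and discrete-fibration-valued transformations --- which it does once Propositions~\ref{prop:hyperdisc}, \ref{prop:disclift} and~\ref{prop:disctotal} are in hand.

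On objects: by Proposition~\ref{prop:hyperdisc}, $\Gamma$ sends a hyperconnected $F \colon \C \to \cat{Stab}^\op$ to a local discrete fibration $\Gamma F \colon \Gamma\C \to \Gamma\cat{Stab}^\op$. Injectivity of this assignment is the uniqueness clause of Proposition~\ref{prop:disclift}; for surjectivity, given any local discrete fibration $P \colon \K \to \Gamma\cat{Stab}^\op$, Proposition~\ref{prop:disclift} lifts it to a restriction functor $\hat P \colon \hat\K \to \cat{Stab}^\op$ with $\Gamma\hat P = P$, and $\hat P$ is hyperconnected by Proposition~\ref{prop:hyperdisc} again.

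On hom-categories: fixing hyperconnected $F \colon \C \to \cat{Stab}^\op$ and $G \colon \D \to \cat{Stab}^\op$, I must see that $\Gamma$ induces a bijection on $1$-cells and on $2$-cells between $\cat{rCat}'(F,G)$ and $\cat{rCat}''(\Gamma F, \Gamma G)$. The crux is that, given a $1$-cell $(H',\gamma')$ of $\cat{rCat}''$, the $2$-functor $H' \colon \Gamma\C \to \Gamma\D$ is $\Gamma$ of a restriction functor. Since $\C$ and $\D$ carry the same underlying categories and local orders as $\Gamma\C$ and $\Gamma\D$, and $H'$ preserves local orders, this reduces to showing the underlying functor of $H'$ satisfies $H'\bar f = \overline{H'f}$. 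To prove this I would note that $(GH')\bar f$ is a restriction idempotent (because $\bar f \leqslant 1_A$ and $G$, $H'$ preserve local orders), that the components $\gamma'_A$ are total by Proposition~\ref{prop:disctotal}, and then combine naturality of $\gamma'$ at $f$ and at $\bar f$ with the identity $\overline{gh} = \overline{\bar g h}$ of~\cite[Lemma 2.1(iii)]{Cockett2002Restriction} and the equation $F\bar f = \overline{Ff}$ to obtain
\begin{equation*}
\overline{(GH')f} = \overline{\overline{Ff}\cdot\gamma'_A} = \overline{\gamma'_A\cdot(GH')\bar f} = \overline{(GH')\bar f} = (GH')\bar f\rlap{ ;}
\end{equation*}
feeding this through the restriction functor $G$ and using uniqueness of liftings along the local discrete fibration $\Gamma G$ then gives $H'\bar f = \overline{H'f}$ in $\D$. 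Granting this, the rest is bookkeeping: $H'$ equals $\Gamma H$ for the restriction functor $H$ just produced; a $2$-natural transformation with total components is precisely an ordinary total natural transformation $GH \Rightarrow F$ in $\cat{rCat}$ (the $2$-naturality being automatic since all the hom-categories are posets), so $(H',\gamma') = \Gamma(H,\gamma)$ for a unique $1$-cell of $\cat{rCat}'$; and a $2$-natural transformation $\theta$ satisfying the compatibility equation of $\cat{rCat}''$ automatically has total components --- whiskering with $G$ and using totality of the relevant $\gamma'$-components shows $G\theta_A$ is total, whence $\theta_A$ is total since $\Gamma G$ is a local discrete fibration --- so it is exactly a $2$-cell of $\cat{rCat}'$.

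The main obstacle is the displayed chain of equalities, that is, extracting from the naturality of $\gamma'$ the fact that the $2$-functor $H'$ appearing in a $\cat{rCat}''$-morphism actually respects restrictions, despite being a priori only a $2$-functor; everything else is a routine transcription of the definitions of $\cat{rCat}'$ along $\Gamma$ using the three propositions together with Theorem~\ref{thm:equiv1}.
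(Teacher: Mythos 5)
Your proposal is correct and follows essentially the same route as the paper: reduce to showing $\Gamma$ identifies $\cat{rCat}'$ with $\cat{rCat}''$, handle objects via Propositions~\ref{prop:hyperdisc} and~\ref{prop:disclift}, and establish $2$-fully faithfulness by proving $H'\bar f = \overline{H'f}$ through exactly the paper's computation (naturality and totality of $\gamma'$, the identity $\overline{gh}=\overline{\bar g h}$, and injectivity of the hyperconnected $G$ on restriction idempotents), with the same totality-reflection argument for $2$-cells. The only cosmetic difference is that you observe the correspondence is in fact an isomorphism of $2$-categories rather than merely a $2$-equivalence, which the uniqueness clause of Proposition~\ref{prop:disclift} indeed supports.
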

\begin{proof}
It suffices to show that $\cat{rCat}''$ is $2$-equivalent to the $2$-category $\cat{rCat}'$ of Theorem~\ref{thm:equiv1}. By Proposition~\ref{prop:hyperdisc}, each hyperconnected morphism $F \colon \C \to \cat{Stab}^\op$ induces a local discrete fibration $\Gamma F \colon \Gamma \C \to \Gamma \cat{Stab}^\op$, and by Proposition~\ref{prop:disctotal}, this assignation provides the action on objects of a $2$-functor $\cat{rCat}' \to \cat{rCat}''$. By Propositions~\ref{prop:hyperdisc} and~\ref{prop:disclift}, this $2$-functor is surjective on objects; we claim it is also $2$-fully faithful. To show fully faithfulness on $1$-cells, consider a diagram
\begin{equation*}
\cd[@!C@-0.5em]{
\Gamma\C \ar[rr]^H \ar[dr]_{\Gamma F} & \ltwocell[0.4]{d}{\gamma} & \Gamma\D \ar[dl]^{\Gamma G} \\
& \Gamma\cat{Stab}^\op
}
\end{equation*}
in $\cat{2\text-Cat}$, with all components of $\gamma$ discrete fibrations. By Proposition~\ref{prop:disctotal}, each $\gamma_A$ is a total map. We must show that $H = \Gamma \hat H$ for a unique restriction functor $\hat H \colon \C \to \D$; it will then follow that $\gamma$ is the image under $\Gamma$ of the unique total natural transformation $G\hat H \to F$. Clearly $\hat H$ must be defined as $H$ is on objects and morphisms, and so all that is needed is to verify that $\overline{Hf} = H \overline f$ for each $f \colon A \to B$ in $\C$. Since $\overline{Hf}\leqslant 1_{HA}$ and $H \overline f  \leqslant 1_{HA}$, both are restriction idempotents on $HA$ in $\D$; since $G$ is hyperconnected, it suffices to show that $ G(\overline{Hf}) = GH\overline f$ in $\cat{Stab}^\op$. Observe first that,  for any map $k\colon X \to Y$ in $\C$, we have $\overline{GHk} = \overline{\overline{\gamma_Y}.GHk} = \overline{\gamma_Y.GHk} = \overline{Fk.\gamma_X}$, using naturality and totality of $\gamma$ together with~\cite[Lemma 2.1(iii)]{Cockett2002Restriction}.
Using this and the fact that $F$ and $G$ are restriction functors, we calculate that
\begin{align*}
G(\overline{Hf}) &= \overline{GHf} = \overline{Ff.\gamma_A} = \overline{\overline{Ff}.\gamma_A} = \overline{F\overline f.\gamma_A} = \overline{GH\overline f} = GH\overline f
%
\end{align*}
in $\cat{Stab}^\op$, as required.
%
%
%
This shows that $U \colon \cat{rCat}' \to \cat{rCat}''$ is fully faithful on $1$-cells.
As for $2$-cells, suppose we are given $\theta \colon (\Gamma H, \Gamma \gamma) \Rightarrow  (\Gamma H', \Gamma \gamma') \colon (\Gamma \C, \Gamma F) \to (\Gamma \D, \Gamma G)$ in $\cat{rCat}''$. We must show that $\theta = \Gamma \hat \theta$ for a unique $\hat \theta$, which will clearly be the case so long as $\theta$ has total components in $\D$. Since $G$ is hyperconnected, it reflects totality, and so it is enough to show that each $G\theta_X$ is a total map of $\cat{Stab}^\op$. But $\gamma'_X . G\theta_X = \gamma_X$ in $\cat{Stab}^\op$, and $\gamma_X$ and $\gamma'_X$ are both total, whence also $G\theta_X$ by the cancellativity properties of total maps.
%
\end{proof}

\section{Notions of enriched category}\label{sec:enrichment}
We are now in a position to explain how restriction categories may be viewed as enriched categories, but before doing so, must break off to discuss briefly the notion of enrichment we shall need. If $\V$ is a monoidal category, there is a well-known notion of \emph{category enriched in $\V$} or \emph{$\V$-category}~\cite{Kelly1982Basic}, whose homs are given by objects from $\V$ rather than by sets. It was first in the work of Walters~\cite{Walters1981Sheaves} that it was realised that a more general kind of enrichment was fruitful, in which the monoidal category $\V$ is replaced by a bicategory $\W$. In this case, a \emph{$\W$-category} $\C$ is given by:
\begin{itemize}
\item A set of objects $\ob \C$;
\item A function $\abs{\thg} \colon \ob \C \to \ob \W$;
\item For each $x, y \in \ob \C$, a $1$-cell $\C(x,y) \colon \abs x \to \abs y$ in $\W$;
\item For each $x \in  \ob \C$, a $2$-cell $j_x \colon 1_{\abs x} \Rightarrow \C(x,x)$ in $\W$; and
\item For each $x,y,z \in  \ob \C$, a $2$-cell $m_{xyz} \colon \C(y,z) \otimes \C(x,y) \Rightarrow \C(x,z)$ in $\W$
\end{itemize}
with $j$ and $m$ satisfying the usual associativity and unitality axioms; there are corresponding notions of $\W$-functor and $\W$-transformation. The use to which Walter put these concepts was in representing sheaves on a given locale (more generally, site) as categories enriched in an associated bicategory $\W$. Similarly, we will shortly be able to exhibit restriction categories as categories enriched in a particular bicategory.

However, it turns out that whilst we may identify restriction categories with $\W$-categories for a suitable $\W$, this correspondence does not extend to the functors and natural transformations between them; there are more restriction functors between two restriction categories than there are $\W$-functors between the corresponding $\W$-categories. To rectify this, we will consider enrichment over a yet more general kind of base, namely that of a \emph{weak double category}. Enrichments of this and even more general sorts were considered in~\cite{Leinster2002Generalized}.

Recall from~\cite{Grandis1999Limits} that a \emph{weak double category} $\mathbb W$ is a pseudo-category object in $\cat{Cat}$: it is given by collections of \emph{objects} $A, B, \dots$, \emph{vertical morphisms} $f, g, \dots \colon A \to B$, \emph{horizontal morphisms} $U,V, \dots \colon A \tor B$ and \emph{cells}
\begin{equation}\label{eq:typcell}
\cd{
  A \ar[d]_f \ar@{->}[r]|{\!|\!}^{U} \dtwocell{dr}{\alpha} & C \ar[d]^g \\
  B \ar@{->}[r]|{\!|\!}_{V}  & D\rlap{ ,}
}
\end{equation}
together with composition and identity operations for vertical and horizontal arrows, and for cells along vertical and horizontal boundaries. Composition of vertical arrows is strictly associative and unital, whilst that of horizontal arrows is only associative and unital up to \emph{globular cells}: ones whose vertical boundaries are identities. Every bicategory can be seen as a weak double category with only identity vertical arrows; conversely, the objects, horizontal arrows, and globular cells in any weak double category form a bicategory, the \emph{underlying bicategory} of the weak double category.

We now describe the notions of category, functor and natural transformation enriched in a weak double category $\mathbb W$.  Firstly, a $\mathbb W$-category is simply a category enriched over the underlying bicategory of $\mathbb W$. For $\mathbb W$-categories $\C$ and $\D$, a \emph{$\mathbb W$-functor} $\C \to \D$ is given by:
\begin{itemize}
\item A function $H \colon \ob \C \to \ob \D$;
\item Vertical morphisms $H_x \colon \abs{x} \to \abs{Hx}$ for each $x \in \ob \C$;
\item Cells
\begin{equation*}
\cd[@C+1em]{
  \abs x \ar[d]_{H_x} \ar[r]|{\!|\!}^{\C(x,y)} \dtwocell{dr}{H_{x,y}} & \abs y \ar[d]^{H_y} \\
  \abs {Hx} \ar[r]|{\!|\!}_{\D(Hx,Hy)} & \abs{Hy}
}
\end{equation*} for each $x, y \in \ob \C$, satisfying the usual two functoriality axioms.
\end{itemize}
A \emph{$\mathbb W$-natural transformation} $\alpha \colon H \Rightarrow K \colon \C \to \D$ is given by
a collection of cells
\begin{equation*}
\cd[@C+1em]{
  \abs x \ar[d]_{H_x} \ar[r]|{\!|\!}^{1_{\abs x}} \dtwocell{dr}{\alpha_{x}} & \abs x \ar[d]^{K_x} \\
  \abs {Hx} \ar[r]|{\!|\!}_{\D(Hx,Kx)} & \abs{Kx}  
}
\end{equation*}
satisfying one naturality axiom. The $\mathbb W$-categories, $\mathbb W$-functors and $\mathbb W$-natural transformations form a $2$-category $\mathbb W\text-\cat{Cat}$. In particular, when $\mathbb W$ is the weak double category associated to a bicategory $\W$, we obtain a $2$-category $\W$-$\cat{Cat}$ of $\W$-categories, $\W$-functors and $\W$-transformations, as in~\cite{Walters1981Sheaves}.

\section{Restriction categories as enriched categories}\label{sec:4}
We now return to the task of exhibiting restriction categories as enriched categories. The one remaining ingredient we require is a construction due to Brian Day~\cite{Day1970On-closed}. Given a locally small $2$-category $\K$, we consider the bicategory $\P \K$ whose objects are those of $\K$, and whose hom-categories are given by $\P \K(A,B) \defeq [\K(A,B)^\op, \cat{Set}]$. Writing $\Y$ for the Yoneda embedding into a presheaf category, the identity map in $\P \K(B,B)$ is the representable ${\mathcal Y}(1_B)$, and composition $\P \K(B,C) \times \P \K(A,B) \to \P \K(A,C)$ is Day convolution: it is determined by the requirement that it be cocontinuous in each variable and defined on representables by ${\mathcal Y}g . {\mathcal Y}f = {\mathcal Y}(g. f)$. 
%
%
There is a homomorphism of bicategories ${\mathcal Y} \colon \K \to \P \K$ which is the identity on objects, and on each hom-category is the Yoneda embedding.
Using the equivalence between presheaves on a small category $\C$ and discrete fibrations over $\C$ in $\cat{Cat}$, it is now not difficult to derive the following result, due to Richard Wood in collaboration with the first author; we shall prove a generalisation of it as Proposition~\ref{prop:genwood} below.
\begin{Prop}[\cite{Wood2006Variation}]\label{prop:wood} 
If $\K$ is a locally small $2$-category, then the $2$-category $\P \K\text-\cat{Cat}$ is $2$-equivalent to the $2$-category $\cat{2}\text-\cat{Cat}\mathbin{/_{\mathrm{ldf}}} \K$ whose objects are local discrete fibrations $F \colon \C \to \K$ with $\C$ locally small, and whose $1$- and $2$-cells are $2$-functors and $2$-natural transformations  commuting with the projections to $\K$.
\end{Prop}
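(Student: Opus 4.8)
The plan is to exhibit an explicit pseudofunctorial correspondence between $\P\K$-categories and local discrete fibrations into $\K$, and to check it is $2$-fully faithful and surjective on objects in the appropriate sense. Since the statement is claimed to follow ``not difficult[ly]'' from the equivalence between presheaves on a small category and discrete fibrations over it, I would organise the argument around transporting that equivalence through the hom-categories. So first I would recall in precise form the classical \emph{category of elements} construction: for a small category $\A$, the assignment $P \mapsto \mathrm{el}(P)$ sending a presheaf $P \in [\A^\op, \cat{Set}]$ to its category of elements, equipped with the projection $\mathrm{el}(P) \to \A$, is an equivalence between $[\A^\op, \cat{Set}]$ and the full subcategory of $\cat{Cat}/\A$ on the discrete fibrations; and under this equivalence Day convolution along representables corresponds to composition of morphisms in $\A$. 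The key point to verify carefully is that this equivalence is \emph{monoidal} in the relevant sense: composing discrete fibrations $\mathrm{el}(Q) \to \K(B,C)$ and $\mathrm{el}(P) \to \K(A,B)$ via pullback-then-reindex along the composition functor $\K(B,C) \times \K(A,B) \to \K(A,C)$ matches the Day convolution $Q * P$, and the representable $\Y(1_B)$ corresponds to the identity-selecting discrete fibration $1 \to \K(B,B)$.

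Given that dictionary, I would define the comparison $2$-functor $\Phi \colon \P\K\text-\cat{Cat} \to \cat{2}\text-\cat{Cat}\mathbin{/_{\mathrm{ldf}}}\K$ as follows. A $\P\K$-category $\C$ has objects $\ob\C$, a function $\abs{\thg} \colon \ob\C \to \ob\K$, hom-presheaves $\C(x,y) \in [\K(\abs x, \abs y)^\op, \cat{Set}]$, and unit/composition $2$-cells in $\P\K$. Applying $\mathrm{el}(\thg)$ hom-wise, I obtain a $2$-category $\Phi\C$ whose objects are $\ob\C$, whose hom-category $(\Phi\C)(x,y)$ is $\mathrm{el}(\C(x,y))$, and whose composition and identities are induced — using the monoidality just noted — from $m_{xyz}$ and $j_x$; the associativity and unit axioms for $\C$ translate exactly into the $2$-category axioms for $\Phi\C$. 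The hom-wise projections $\mathrm{el}(\C(x,y)) \to \K(\abs x, \abs y)$ assemble into a $2$-functor $F_\C \colon \Phi\C \to \K$ which is a local discrete fibration by construction, with $\Phi\C$ locally small because each $\C(x,y)$ is a small presheaf and $\K$ is locally small. On $1$-cells, a $\P\K$-functor $H$ supplies on objects a function $\ob\C \to \ob\D$ together with a \emph{fixed} assignment $\abs{Hx} = \abs{x}$ (here the vertical-morphism data of the general weak-double-category definition degenerates, since $\P\K$ as a bicategory has only identity verticals), plus $2$-cells $H_{x,y} \colon \C(x,y) \Rightarrow \D(Hx,Hy)$ in $\P\K$; applying $\mathrm{el}$ hom-wise gives a $2$-functor $\Phi H \colon \Phi\C \to \Phi\D$ commuting with the projections to $\K$, and the two functoriality axioms for $H$ become the two functoriality axioms for $\Phi H$. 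On $2$-cells, a $\P\K$-natural transformation $\alpha$ gives cells $\alpha_x \colon \Y(1_{\abs x}) \Rightarrow \D(Hx,Kx)$, i.e.\ via $\mathrm{el}$ an object of $(\Phi\D)(Hx,Kx)$ lying over $1_{\abs x}$, and its single naturality axiom becomes $2$-naturality of $\Phi\alpha$ over $\K$.

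To finish I would construct a quasi-inverse $\Psi$ by running the same dictionary backwards: given a local discrete fibration $F \colon \C' \to \K$ with $\C'$ locally small, set $\ob(\Psi\C') = \ob\C'$, $\abs x = Fx$, and $(\Psi\C')(x,y) \in [\K(Fx,Fy)^\op,\cat{Set}]$ the presheaf corresponding under the elements equivalence to the discrete fibration $F_{x,y} \colon \C'(x,y) \to \K(Fx,Fy)$; the composition and identities of $\C'$, which live over those of $\K$, transport to the required $2$-cells in $\P\K$ by monoidality, and likewise for $1$- and $2$-cells. The natural isomorphisms $\Psi\Phi \cong \Id$ and $\Phi\Psi \cong \Id$ come hom-wise from the unit and counit of the elements equivalence, and these are manifestly compatible with all the structure. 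I expect the main obstacle to be \emph{purely bookkeeping}: namely checking the monoidality/coherence claim — that $\mathrm{el}(\thg)$ intertwines Day convolution with pullback-and-reindex, compatibly with the associativity and unit constraints of $\P\K$ on one side and the strict $2$-category axioms on the other — and then threading that compatibility through the functor and natural-transformation axioms without drowning in indices. There is no conceptual difficulty, but one must be careful that the coherence $2$-cells of the bicategory $\P\K$ (which are nontrivial, being those of Day convolution) are exactly absorbed by the strictness of composition in a $2$-category; this is the one place where something could in principle go wrong, and it is what one is really verifying.
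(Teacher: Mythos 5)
Your construction is correct and is essentially the paper's own: the paper proves the double-categorical generalisation (Proposition~\ref{prop:genwood}) by exactly this hom-wise category-of-elements dictionary (Lemma~\ref{lem:otherdesc}), of which your argument is the special case where the vertical data degenerate to identities, forcing $\abs{Hx}=\abs{x}$ and strict commutation over $\K$ as you note. The one small imprecision is that the discrete fibration corresponding to the convolution $Q\ast P$ is obtained from the comprehensive (final, discrete fibration) factorisation of $\mathrm{el}(Q)\times\mathrm{el}(P)\to\K(A,B)\times\K(B,C)\to\K(A,C)$ rather than a literal ``pullback-then-reindex'', but the universal property you actually invoke---maps $Q\ast P\Rightarrow R$ correspond to functors $\mathrm{el}(Q)\times\mathrm{el}(P)\to\mathrm{el}(R)$ over composition in $\K$---is the correct one and is all the proof needs.
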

Comparing this result with Theorem~\ref{thm:equiv2}, we see that restriction categories may be identified with $\P(\Gamma\cat{Stab}^\op)$-enriched categories; but that, as anticipated above, restriction functors between restriction categories are rather more general than $\P(\Gamma\cat{Stab}^\op)$-functors between the corresponding $\P(\Gamma\cat{Stab}^\op)$-categories. To rectify this, we shall consider enrichment in a weak double category obtained by a double-categorical analogue of Day's construction.
 
Suppose that $\K$ is a locally small $2$-category; we construct a weak double category $\mathbb P\K$ as follows. Its objects are those of $\K$, a vertical arrow $A \to B$ is a discrete fibration $B \to A$ in $\K$, a horizontal arrow $A \tor B$ is a presheaf $U \in [\K(A,B)^\op, \cat{Set}]$, whilst a cell of the form~\eqref{eq:typcell} is a $2$-cell
\begin{equation}\label{eq:particularcell}
\cd{
  A \ar@{<-}[d]_{{\mathcal Y}f} \ar[r]^{U} \dtwocell{dr}{\alpha} & C \ar@{<-}[d]^{{\mathcal Y}g} \\
  B \ar[r]_{V}  & D
}
\end{equation}
in $\P \K$. Composition of vertical morphisms is as in $\K$, that of horizontal morphisms is as in $\P \K$ and cell composition is given by pasting in $\P \K$. It is easy to see that the underlying bicategory of $\mathbb P \K$ is isomorphic to $\P \K$.

\begin{Prop}\label{prop:genwood}
If $\K$ is a locally small $2$-category, then $\mathbb P \K\text-\cat{Cat}$ is $2$-equivalent to the $2$-category $\cat{2}\text-\cat{Cat}\mathbin{\sslash_{\mathrm{ldf}}} \K$ whose objects are local discrete fibrations $F \colon \C \to \K$ with $\C$ locally small; whose morphisms are diagrams
\begin{equation*}
\cd[@!C@-0.5em]{
\C \ar[rr]^H \ar[dr]_F & \ltwocell[0.4]{d}{\gamma} & \D \ar[dl]^{G} \\
& \K
}
\end{equation*}
with $H$ a $2$-functor and $\gamma$ a $2$-natural transformation whose components are discrete fibrations; and whose $2$-cells $(H, \gamma) \to (H', \gamma')$ are $2$-natural transformations $\theta \colon H \to H'$ with $\gamma'.G\theta = \gamma$.
\end{Prop}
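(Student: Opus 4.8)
The plan is to construct a $2$-equivalence between $\mathbb{P}\K\text-\cat{Cat}$ and $\cat{2}\text-\cat{Cat}\mathbin{\sslash_{\mathrm{ldf}}}\K$ directly, exploiting throughout the classical equivalence of categories $[\A^\op, \cat{Set}] \simeq \cat{DFib}(\A)$ between presheaves on a small category $\A$ and discrete fibrations over $\A$ in $\cat{Cat}$, applied one hom-category at a time. The point is that $\mathbb{P}\K$ has been set up precisely so that all of its data --- objects, horizontal arrows (presheaves), vertical arrows (discrete fibrations in $\K$) and cells --- translate under this dictionary into exactly the data of a local discrete fibration over $\K$ together with the $2$-functors and $2$-natural transformations between such.

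On objects: since $\mathbb{P}\K$ and $\P\K$ have the same underlying bicategory, a $\mathbb{P}\K$-category is simply a $\P\K$-category $\C$, and to it I would associate its ``$2$-category of elements'' $\widetilde\C$, whose objects are those of $\C$, whose hom-category $\widetilde\C(x,y)$ is the category of elements of the presheaf $\C(x,y)$ on $\K(\abs x,\abs y)$, and whose identities and composition are induced from the unit and multiplication $2$-cells $j_x, m_{xyz}$ of $\C$ --- here using that Day convolution of presheaves corresponds, under the dictionary, to the composition operation on discrete fibrations determined by composition in $\K$. The evident projection $F_\C \colon \widetilde\C \to \K$, sending $x$ to $\abs x$ and an element $(f,a)$ of $\widetilde\C(x,y)$ to $f$, is then a discrete fibration on each hom-category by construction, and the size conditions match up. Conversely, a local discrete fibration $F\colon \D \to \K$ with $\D$ locally small yields a $\P\K$-category $\C_F$ with the same objects, $\abs d = Fd$, hom-presheaves $\C_F(d,e)$ corresponding to the discrete fibrations $\D(d,e)\to\K(Fd,Fe)$, and unit and multiplication read off from identities and composition in $\D$; that $\C \mapsto \widetilde\C$ and $F \mapsto \C_F$ are mutually inverse up to coherent isomorphism is the hom-wise equivalence above. (This already yields the object part of Proposition~\ref{prop:wood}.)

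The genuinely new content is the treatment of $1$- and $2$-cells. Unwinding the definition, a $\mathbb{P}\K$-functor $H \colon \C \to \D$ consists of a function on objects; for each $x$, a vertical arrow $H_x$ of $\mathbb{P}\K$, namely a discrete fibration $d_x \colon \abs{Hx} \to \abs x$ in $\K$; and for each $x,y$ a cell $H_{x,y}$, which by the definition~\eqref{eq:particularcell} of a cell of $\mathbb{P}\K$ is a $2$-cell $\C(x,y)\circ\Y d_x \To \Y d_y \circ \D(Hx,Hy)$ in $\P\K$. I would translate this last datum using (i) that composing with a representable $\Y f$ on either side in $\P\K$ corresponds under the dictionary to the operation on discrete fibrations left adjoint to reindexing along the appropriate whiskering functor $(\thg)\cdot f$ or $f\cdot(\thg)$ between hom-categories of $\K$, and (ii) that a morphism of presheaves corresponds to a functor-over-the-base between categories of elements; by (i), (ii) and the relevant adjunctions, $H_{x,y}$ is exactly the data of a functor $\widetilde\C(x,y) \to \widetilde\D(Hx,Hy)$ compatible, over $\K$, with the reindexings induced by $d_x$ and $d_y$. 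Assembling these over all $x,y$, the two $\mathbb{P}\K$-functor axioms say precisely that one obtains a $2$-functor $\widetilde H \colon \widetilde\C \to \widetilde\D$ together with a $2$-natural transformation $\gamma \colon F_\D\widetilde H \To F_\C$ whose component at $x$ is $d_x$ --- that is, a $1$-cell of $\cat{2}\text-\cat{Cat}\mathbin{\sslash_{\mathrm{ldf}}}\K$; the inverse passage reads a lax triangle $(H, \gamma)$ as a $\mathbb{P}\K$-functor with $H_x \defeq \gamma_x$ and $H_{x,y}$ reconstructed from the action of $H$ on hom-categories and the naturality of $\gamma$. The same analysis one dimension down shows that a $\mathbb{P}\K$-natural transformation $\alpha \colon H \To K$ between $\mathbb{P}\K$-functors amounts to a $2$-natural transformation $\widetilde\alpha \colon \widetilde H \To \widetilde K$ with $\gamma_K \cdot F_\D\widetilde\alpha = \gamma_H$ --- a $2$-cell of $\cat{2}\text-\cat{Cat}\mathbin{\sslash_{\mathrm{ldf}}}\K$ --- the single $\mathbb{P}\K$-naturality axiom transcribing to $2$-naturality.

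Finally, I would package these assignments into a $2$-functor $\Phi \colon \mathbb{P}\K\text-\cat{Cat} \to \cat{2}\text-\cat{Cat}\mathbin{\sslash_{\mathrm{ldf}}}\K$ and a $2$-functor $\Psi$ in the reverse direction, with $\Phi\Psi$ and $\Psi\Phi$ $2$-naturally isomorphic to the respective identities; since both act on objects, $1$-cells and $2$-cells by the dictionary constructions above, the compatibilities with composition and identities are formal. The step requiring genuine care, rather than mere bookkeeping, is (i)--(ii) of the preceding paragraph: correctly identifying a cell of $\mathbb{P}\K$ --- a $2$-cell between Day-convolution composites involving representables in $\P\K$ --- with a functor-over-$\K$ between categories of elements that respects reindexing. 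This rests on a clean account of how composition with representables in $\P\K$ interacts with the presheaf/discrete-fibration equivalence, via the adjunction between reindexing and its left adjoint in $\cat{Cat}$; once that is in hand, checking that the $\mathbb{P}\K$-functor and $\mathbb{P}\K$-naturality axioms become $2$-functoriality and $2$-naturality over $\K$ is a routine, if lengthy, diagram chase.
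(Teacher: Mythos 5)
Your proposal is correct and follows essentially the same route as the paper: both pass to the hom-wise category of elements to build the $2$-category over $\K$, and both hinge on identifying a cell of $\mathbb P\K$ (a $2$-cell between Day-convolution composites with representables) with a functor between categories of elements commuting over $\K$ with the whiskering functors --- the step you flag as needing care is exactly the paper's Lemma~\ref{lem:otherdesc}, proved there via left Kan extension along $\K(f,C)$ and $\K(B,g)$ and the fact that $\K(B,g)$ is a discrete fibration. The only cosmetic difference is that you build an explicit pseudo-inverse, where the paper checks that the single $2$-functor $\Lambda$ is essentially surjective and $2$-fully faithful.
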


To prove this result, we will need an alternate description of the cells in $\mathbb P \K$.
\begin{Lemma}\label{lem:otherdesc}
To give a cell~\eqref{eq:particularcell} is equally to give a functor  $\alpha$ fitting into a commutative diagram
\begin{equation}\label{eq:otherdesc}
\cd[@C+1em]{
\mathrm{el}\ U \ar[d]_{\alpha} \ar[rr]^-{\pi_U} && \K(A,C) \ar[d]^{\K(f, C)} \\
\mathrm{el}\ V \ar[r]_-{\pi_V}  & \K(B,D) \ar[r]_-{\K(B,g)}
& \K(B,C)\rlap{ .}
}
\end{equation}
\end{Lemma}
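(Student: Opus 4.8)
The plan is to peel the definition of \emph{cell in $\mathbb P\K$} apart into a chain of natural bijections, landing exactly on the commutative diagram \eqref{eq:otherdesc}. By construction of $\mathbb P\K$, a cell of the form \eqref{eq:particularcell} is a $2$-cell $\alpha \colon U \cdot \Y f \Rightarrow \Y g \cdot V$ in the bicategory $\P\K$, where $\cdot$ is Day-convolution composition and both composites are presheaves on $\K(B,C)$. So the first job is to evaluate these two composites, the second is to transpose across an adjunction, and the third is to translate into the language of discrete fibrations, where the pullback square of \eqref{eq:otherdesc} will appear.

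For the first step, observe that $\Y f$ is the representable presheaf at $f\in\K(B,A)$ and $\Y g$ the representable at $g\in\K(D,C)$, so the co-Yoneda lemma collapses the coends computing the two Day convolutions, giving natural isomorphisms
\[
U\cdot\Y f\;\cong\;\Lan_{\K(f,C)}U \qquad\text{and}\qquad \Y g\cdot V\;\cong\;\Lan_{\K(B,g)}V
\]
of presheaves on $\K(B,C)$, where $\K(f,C)\colon\K(A,C)\to\K(B,C)$ is precomposition with $f$ and $\K(B,g)\colon\K(B,D)\to\K(B,C)$ is postcomposition with $g$. Thus a cell \eqref{eq:particularcell} is the same as a morphism of presheaves $\Lan_{\K(f,C)}U\to\Lan_{\K(B,g)}V$. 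It is worth recording already here that, since $g$ is a discrete fibration in $\K$, the functor $\K(B,g)=\K(B,g)$ is a discrete fibration of categories (instantiate the defining condition at $X=B$); the analogous statement fails for $\K(f,C)$, which is why the two legs will play asymmetric roles below.

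Next I would transpose across the adjunction $\Lan_{\K(f,C)}\dashv\K(f,C)^{\ast}$, where $\K(f,C)^{\ast}$ is restriction of presheaves along $\K(f,C)$, obtaining a natural bijection with the morphisms of presheaves $U\to\K(f,C)^{\ast}\bigl(\Lan_{\K(B,g)}V\bigr)$ on $\K(A,C)$. Now pass to categories of elements, using the equivalence between presheaves on a small category and discrete fibrations over it: $U$ corresponds to $\pi_U\colon\mathrm{el}\ U\to\K(A,C)$; because $\K(B,g)$ is a discrete fibration of categories, $\Lan_{\K(B,g)}V$ corresponds to the composite discrete fibration $\K(B,g)\circ\pi_V\colon\mathrm{el}\ V\to\K(B,C)$; and restriction of presheaves along any functor corresponds to pullback of discrete fibrations. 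Hence $\K(f,C)^{\ast}\bigl(\Lan_{\K(B,g)}V\bigr)$ corresponds to the discrete fibration $P\to\K(A,C)$ obtained by pulling $\K(B,g)\circ\pi_V$ back along $\K(f,C)$, a map of presheaves on $\K(A,C)$ becomes a functor $\mathrm{el}\ U\to P$ over $\K(A,C)$, and one last appeal to the universal property of this pullback identifies such a functor with a functor $\alpha\colon\mathrm{el}\ U\to\mathrm{el}\ V$ satisfying $\K(B,g)\circ\pi_V\circ\alpha=\K(f,C)\circ\pi_U$ --- that is, making \eqref{eq:otherdesc} commute. Composing all of these bijections yields the stated correspondence.

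The steps above are individually routine; the points to watch are the bookkeeping ones. One must keep straight the orientation of $\alpha$ as a $2$-cell of $\P\K$ (from $U\cdot\Y f$ to $\Y g\cdot V$), since it is exactly this that makes $\K(B,g)$ rather than $\K(f,C)$ the leg computed as a composite of discrete fibrations; and one must remember that $\K(f,C)$ need not be a discrete fibration, so the pullback in the final step genuinely cannot be replaced by a composite. The only slightly non-formal inputs are the two standard facts that, along a discrete fibration $k$, restriction of presheaves corresponds to pullback of discrete fibrations and $\Lan_k$ to composition with $k$. Finally, although the Lemma as stated only asserts a bijection, for its use in Proposition~\ref{prop:genwood} one should also check that this bijection is compatible with the horizontal and vertical composition of cells in $\mathbb P\K$ (pasting of commutative squares of the form \eqref{eq:otherdesc}); this is the lengthiest verification, but it is entirely mechanical given the explicit descriptions above.
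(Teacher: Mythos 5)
Your proof is correct and follows essentially the same route as the paper: identifying the two Day-convolution composites with $\Lan_{\K(f,C)}U$ and $\Lan_{\K(B,g)}V$, transposing across $\Lan_{\K(f,C)}\dashv\K(f,C)^\ast$, and using that $\Lan$ along the discrete fibration $\K(B,g)$ corresponds to composition while restriction corresponds to pullback of discrete fibrations. The paper merely runs this chain of bijections in the opposite direction and with less detail, so there is nothing to change.
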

\begin{proof}
Because composition in $\P \K$ is Day convolution, to give a cell~\eqref{eq:particularcell} is equally to give a map $\Lan_{\K(f,C)}(U) \to \Lan_{\K(B,g)}(V)$ in $[\K(B,C)^\op, \cat{Set}]$. Now $\K(B,g)$ is a discrete fibration, since $g$ is, and thus the bottom face of~\eqref{eq:otherdesc} is the discrete fibration corresponding to $\Lan_{\K(B,g)}(V)$. To give a map $\alpha$ as indicated is equally to give a map from $\pi_U$ to the pullback of the bottom face along $\K(f,C)$; which is to give a map of presheaves $U \to \K(f,C)^\ast(\Lan_{\K(B,g)}(V))$, or equally, a map $\Lan_{\K(f,C)}(U) \to \Lan_{\K(B,g)}(V)$, as required.
\end{proof}
We are now ready to give:
\begin{proof}[Proof of Proposition~\ref{prop:genwood}]
We define a $2$-functor $\Lambda \colon \mathbb P \K\text-\cat{Cat} \to \cat{2}\text-\cat{Cat}\mathbin{\sslash_{\mathrm{ldf}}} \K$ as follows. For $\C$ a $\mathbb P \K$-enriched category, we let $\Lambda \C$ be the $2$-category whose objects are those of $\C$, and whose hom-category $(\Lambda\C)(x,y)$ is the category of elements of $\C(x,y) \colon \K(\abs x, \abs y)^\op \to \cat{Set}$. The composition and identities of $\Lambda \C$ are induced by the composition and identity cells of $\C$, and there is a local discrete fibration $\pi_\C \colon \Lambda \C \to \K$ which on objects, sends $x$ to $\abs x$ and on hom-categories is the discrete fibration $(\Lambda \C)(x,y) \to \K(\abs x, \abs y)$. This defines the action of $\Lambda$ on objects.

Now let $H \colon \C \to \D$ be a $\mathbb P \K$-functor. We thus have a function $H \colon \ob \C \to \ob \D$, discrete fibrations $H_x \colon \abs{Hx} \to \abs x$ in $\K$ for each $x \in \C$, and  $2$-cells
\begin{equation*}
\cd[@C+2em]{
  \abs x \ar@{<-}[d]_{{\mathcal Y}H_x} \ar[r]^{\C(x,y)} \dtwocell{dr}{H_{x,y}} & \abs y \ar@{<-}[d]^{{\mathcal Y}H_y} \\
  \abs {Hx} \ar[r]_{\D(Hx,Hy)} & \abs{Hy}
}
\end{equation*}
in $\P\K$ for each $x, y \in \C$. By Lemma~\ref{lem:otherdesc}, to give the $2$-cell $H_{x,y}$ is equally well to give a functor $(\Lambda H)_{x,y} \colon (\Lambda \C)(x,y) \to (\Lambda\D)(Hx, Hy)$ fitting into a commutative diagram
\begin{equation}\label{eq:nat}
\cd[@C+1.2em]{
(\Lambda \C)(x,y) \ar[d]_{(\Lambda H)_{x,y}} \ar[rr]^-{(\pi_\C)_{x,y}} && \K(\abs x,\abs y) \ar[d]^{\K(H_x, 1)} \\
(\Lambda\D)(Hx, Hy) \ar[r]_-{(\pi_\D)_{Hx,Hy}}  & \K(\abs{Hx},\abs{Hy}) \ar[r]_-{\K(1,H_y)}
& \K(\abs{Hx},\abs{y})\rlap{ .}
}
\end{equation}
We therefore obtain a $2$-functor $\Lambda H \colon \Lambda \C \to \Lambda \D$ which sends $x$ to $Hx$, and on hom-categories, is given by these functors $(\Lambda H)_{x,y}$; the $2$-functor axioms for $\Lambda H$ are implied by the $\mathbb P\K$-functor axioms for $H$. 
Moreover, the maps $H_x \colon \abs{Hx} \to \abs{x}$ are the components of a $2$-natural transformation $\gamma_H \colon \pi_D . \Lambda H \Rightarrow \pi_C$; the naturality of these components is expressed precisely by the commutativity of the diagrams~\eqref{eq:nat}. This defines the action of $\Lambda$ on morphisms.

Finally, let $\alpha \colon H \Rightarrow K \colon \C \to \D$ be a $\mathbb P \K$-natural transformation. Thus we have a family of $2$-cells
\begin{equation*}
\cd[@C+2em]{
  \abs x \ar@{<-}[d]_{{\mathcal Y}H_x} \ar[r]^{1_{\abs x}} \dtwocell{dr}{\alpha_x} & \abs x \ar@{<-}[d]^{{\mathcal Y}K_x} \\
  \abs {Hx} \ar[r]_{\D(Hx,Kx)} & \abs{Kx}
}
\end{equation*}
in $\P\K$, satisfying one naturality axiom. To give $\alpha_x$ is equally to give a morphism ${\mathcal Y}H_x \to \Lan_{\K(1, K_x)}(\D(Hx, Kx))$ in $[\K(\abs{Hx}, \abs x)^\op, \cat{Set}]$ and so, applying the Yoneda lemma and passing to categories of elements, a morphism $(\Lambda \alpha)_x$ fitting into a diagram
\begin{equation*}
\cd[@C+1.2em]{
 1 \ar@{=}[rr] \ar[d]_{(\Lambda \alpha)_x} && 1 \ar[d]^{H_x} \\
(\Lambda\D)(Hx, Kx) \ar[r]_-{(\pi_\D)_{Hx,Kx}}  & \K(\abs{Hx},\abs{Kx}) \ar[r]_-{\K(1,K_x)}
& \K(\abs{Hx},\abs{x})\rlap{ .}
}
\end{equation*}
The $\mathbb P\K$-naturality of $\alpha$ implies that the maps $(\Lambda \alpha)_x$ are components of a $2$-natural transformation $\Lambda H \Rightarrow \Lambda K \colon \Lambda \C \to \Lambda \D$; whilst the commutativity of the displayed rectangles implies that $\gamma_K . (\pi_D.\Lambda \alpha) = \gamma_H$. This defines the action of $\Lambda$ on $2$-cells.
It is now not hard to show that the $\Lambda$ so defined is essentially surjective on objects, and $2$-fully faithful, hence a $2$-equivalence.
%
\end{proof}

If we now define $\mathbb R \defeq \mathbb P(\Gamma \cat{Stab}^\op)$, we immediately conclude from  Proposition~\ref{prop:genwood} and Theorem~\ref{thm:equiv2} that:
\begin{Thm}\label{thm:equiv3}
The $2$-category of restriction categories is $2$-equivalent to the $2$-category of $\mathbb R$-enriched categories.
\end{Thm}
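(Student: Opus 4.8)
The plan is to deduce Theorem~\ref{thm:equiv3} by composing the two $2$-equivalences already in hand, with essentially no new work. First I would note that $\cat{Stab}$, though a large category, is locally small, hence so is the $2$-category $\K := \Gamma\cat{Stab}^\op$; thus Proposition~\ref{prop:genwood} is available for this choice of $\K$. Applying it, and recalling that $\mathbb R := \mathbb P(\Gamma\cat{Stab}^\op)$, we obtain a $2$-equivalence
\begin{equation*}
\mathbb R\text-\cat{Cat} \;=\; \mathbb P(\Gamma\cat{Stab}^\op)\text-\cat{Cat} \;\simeq\; \cat{2}\text-\cat{Cat}\mathbin{\sslash_{\mathrm{ldf}}}(\Gamma\cat{Stab}^\op)\rlap{ .}
\end{equation*}

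Next I would identify the $2$-category on the right with the $2$-category $\cat{rCat}''$ of Theorem~\ref{thm:equiv2}. Comparing the two definitions term by term: the objects of both are local discrete fibrations $F \colon \C \to \Gamma\cat{Stab}^\op$ with $\C$ a locally small $2$-category; the $1$-cells of both are lax triangles $(H,\gamma)$ over $\Gamma\cat{Stab}^\op$ in which $H$ is a $2$-functor and $\gamma$ is a $2$-natural transformation all of whose components are discrete fibrations; and the $2$-cells of both are $2$-natural transformations $\theta \colon H \to H'$ satisfying $\gamma'.G\theta = \gamma$. Hence the two $2$-categories coincide verbatim, and there is nothing to prove at this step beyond unwinding notation. (For orientation: by Proposition~\ref{prop:disclift} any such $F$ is, under the canonical identification $\Gamma\hat\C = \C$, the image of a restriction functor $\hat F \colon \hat\C \to \cat{Stab}^\op$, which is what makes it legitimate to view the objects of $\cat{rCat}''$ as restriction categories in the first place; but this remark is not logically needed here, since the two descriptions already agree on the nose.)

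Finally I would invoke Theorem~\ref{thm:equiv2}, which furnishes a $2$-equivalence $\cat{rCat} \simeq \cat{rCat}''$, and compose:
\begin{equation*}
\cat{rCat} \;\simeq\; \cat{rCat}'' \;=\; \cat{2}\text-\cat{Cat}\mathbin{\sslash_{\mathrm{ldf}}}(\Gamma\cat{Stab}^\op) \;\simeq\; \mathbb R\text-\cat{Cat}\rlap{ ,}
\end{equation*}
which is the assertion of the theorem. The proof is therefore purely formal once the earlier results are available; the only genuine point requiring care is size bookkeeping—confirming that $\Gamma\cat{Stab}^\op$ is locally small so that Proposition~\ref{prop:genwood} applies, and that the local-smallness hypothesis on the domains appearing in $\cat{2}\text-\cat{Cat}\sslash_{\mathrm{ldf}}(\Gamma\cat{Stab}^\op)$ matches exactly the standing size convention on the restriction categories comprising $\cat{rCat}$. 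I do not foresee any obstacle beyond this routine check.
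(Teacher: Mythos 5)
Your argument is exactly the paper's: the paper defines $\mathbb R = \mathbb P(\Gamma\cat{Stab}^\op)$ and deduces the theorem immediately by combining Proposition~\ref{prop:genwood} (with $\K = \Gamma\cat{Stab}^\op$) with Theorem~\ref{thm:equiv2}, just as you do. Your extra remarks on local smallness of $\Gamma\cat{Stab}^\op$ and on matching $\cat{rCat}''$ with $\cat{2}\text-\cat{Cat}\mathbin{\sslash_{\mathrm{ldf}}}(\Gamma\cat{Stab}^\op)$ are correct bookkeeping that the paper leaves implicit.
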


\section{Join restriction categories}\label{sec:join}
We have shown that restriction categories may be seen as categories enriched over a certain base; by appropriately changing that base,  we will now see that certain variants of the notion of restriction category are also expressible as enriched categories. In this section we consider join restriction categories, which are restriction categories in which compatible families of parallel morphisms can be patched together. 

A parallel pair of maps $f, g \colon A \rightrightarrows B$ in a restriction category $\C$ are said to be \emph{compatible} if $f \overline g = g \overline f$; this says that $f$ and $g$ agree when restricted to their domain of mutual definition. A family of maps $(f_i \colon A \to B\mid i \in I)$ is called \emph{compatible} if pairwise so.
We call  $\C$ a \emph{join restriction category}~\cite{Cockett2011Differential,Guo2012Products} if every compatible family of maps in $\C(A,B)$ admits a join $\bigvee_i f_i$ with respect to the restriction partial order, and these joins satisfy the axioms:
\begin{enumerate}[(J1)]
\item $\overline{\bigvee_{i} f_i} = \bigvee_{i}\overline{f_i}$;
\item $(\bigvee_{i}f_i) g = \bigvee_{i}(f_ig)$ for all $g \colon A' \to A$ in $\C$;
\item $h(\bigvee_i f_i) = \bigvee_i(h f_i)$ for all $h \colon B \to B'$ in $\C$.
\end{enumerate}
In fact, it turns out that the third of these axioms is a consequence of the other two; see~\cite[Lemma 3.1.8]{Guo2012Products}. 

\newcommand{\vstab}{{\cat{jStab}}}
\begin{Exs}\hfill
\begin{enumerate}[(i)]
\item The category $\cat{Set}_p$ is a join restriction category. A family of partial functions $(f_i \colon A \rightharpoonup B \mid  i \in I)$ is compatible if $f_i(a) = f_j(a)$ whenever $f_i$ and $f_j$ are both defined at $a$, and the union of such a family is defined by
\begin{equation*}
(\textstyle\bigvee_i f_i)(a) = \begin{cases} f_i(a) & \text{if there exists $i \in I$ with $f_i(a)$ defined;} \\ \text{undefined} & \text{otherwise.}\end{cases}
\end{equation*}
\item The category $\cat{Top}_p$ is a join restriction category; the joins in $\cat{Top}_p(A,B)$ are precisely what one needs to verify that the presheaf of $B$-valued continuous functions on $A$ is in fact a sheaf.
\vskip0.5\baselineskip
\item The category $\cat{Rec}$ of partial recursive functions is not a join restriction category, but it is in the obvious sense a \emph{finite} join restriction category. One may patch together finitely many compatible partial recursive functions by suitably interleaving their calculations; but the same is not true of an infinite compatible family unless that family is itself recursively indexed.\vskip0.5\baselineskip

\item Let $\vstab$ denote the subcategory of $\cat{Stab}$ whose objects are frames---complete posets verifying the infinite distributive law $a \wedge \bigvee_{i} b_i = \bigvee_{i} a \wedge b_i$---and whose morphisms $A \to B$ are maps preserving binary meets and all joins. If $A$ is a frame and $a \in A$, then it is easy to see that the restriction idempotent $a \wedge (\thg) \colon A \to A$ in $\cat{Stab}^\op$ lies in $\vstab^\op$; whence $\vstab^\op$ is a hyperconnected restriction subcategory of $\cat{Stab}^\op$. 
It is moreover a join restriction category. A family of maps $(f_i \colon A \leftarrow B)$ is compatible just when $f_i(b) \wedge f_j(\top) = f_j(b) \wedge f_i(\top)$ for all $i,j \in I$ and $b \in B$; the union of such a family is defined pointwise by $(\bigvee_i f_i)(b) = \bigvee_{i}f_i(b)$. 
\end{enumerate}
\end{Exs}
By a \emph{join restriction functor}, we mean a restriction functor $F \colon \C \to \D$ between join restriction categories that satisfies $F(\bigvee_i f_i) = \bigvee_{i}(Ff_i)$ for each compatible family of maps. The join restriction categories, join restriction functors and total natural transformations form a $2$-category $\cat{jrCat}$.

We will now exhibit this as $2$-equivalent to a $2$-category of enriched categories, following the same progression of transformations as in the preceding sections. We begin with a result which is a straightforward consequence of the join restriction axioms.
\begin{Prop}
Let $\C$ be a join restriction category. Then:
\begin{enumerate}[(i)]
\item For each $A \in \C$, $\o (A)$ is a frame.
\item For each $f \colon A \to B$ in $\C$, $\om f \colon \o (B) \to \o (A)$ preserves all joins.
\item The induced factorisation $\O \colon \C \to \vstab^\op$ of the fundamental functor of $\C$ is a hyperconnected join restriction functor.
\end{enumerate}
\end{Prop}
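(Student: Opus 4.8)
My plan is to take the three clauses in turn, in each case reducing the problem to the join axioms (J1)--(J3) and to the description of the fundamental functor recalled above. For (i), I would first observe that any family $(e_i \mid i \in I)$ of restriction idempotents on $A$ is compatible in $\C(A,A)$, since each $e_i$ is idempotent and any two of them commute by (R2); hence $\bigvee_i e_i$ exists in $\C(A,A)$, and by (J1) it satisfies $\overline{\bigvee_i e_i} = \bigvee_i \overline{e_i} = \bigvee_i e_i$, so that it lies in $\o(A)$. Being a least upper bound in the larger poset $\C(A,A)$, it is a fortiori one in the subposet $\o(A)$, so $\o(A)$ has all joins --- including the empty one --- and hence, already being a meet-semilattice, is a complete lattice. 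The frame law is then the one-line calculation $e \wedge \bigvee_i e_i = e \cdot \bigvee_i e_i = \bigvee_i (e e_i) = \bigvee_i (e \wedge e_i)$, whose middle step is (J3) with $h = e$.

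For (ii), given $f \colon A \to B$ and a family $(e_i)$ in $\o(B)$ (again automatically compatible), I would compute directly that $\om f(\bigvee_i e_i) = \overline{(\bigvee_i e_i) f} = \overline{\bigvee_i (e_i f)} = \bigvee_i \overline{e_i f} = \bigvee_i \om f(e_i)$, using (J2) and then (J1); taking $I = \emptyset$ also yields preservation of bottom elements.

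For (iii), clause (i) makes each $\o(A)$ a frame, hence an object of $\vstab^\op$, and clause (ii) --- together with the fact that each $\om f$ preserves binary meets, being a stable map --- makes each $\om f$ a morphism of $\vstab^\op$; so the fundamental functor factors through the inclusion $\vstab^\op \hookrightarrow \cat{Stab}^\op$, and since that inclusion is a faithful restriction functor while the composite is a restriction functor, the factor $\o \colon \C \to \vstab^\op$ is a restriction functor. It is hyperconnected because the composite $\C \to \vstab^\op \hookrightarrow \cat{Stab}^\op$ is the fundamental functor, hence hyperconnected, while the inclusion is hyperconnected, so the factor induces isomorphisms of idempotent lattices; concretely, $\o(e) = \om e$ acts on $\o(A)$ as $d \mapsto \overline{de} = d \wedge e$, that is $\om e = e \wedge (\thg)$, and the restriction idempotents of $\vstab^\op$ on a frame $X$ are precisely the maps $a \wedge (\thg)$ for $a \in X$, so the induced map is the bijection $e \mapsto e \wedge (\thg)$. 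Finally, to see $\o$ preserves joins of compatible families: a restriction functor carries the compatibility relation to itself (apply it to $f \overline g = g \overline f$), so $\o$ sends a compatible family $(f_i \colon A \to B)$ to one in $\vstab^\op$; and since joins of $\vstab^\op$-morphisms are computed pointwise on the underlying monotone maps of frames, for each $e \in \o(B)$ we have $\o(\bigvee_i f_i)(e) = \overline{e \cdot \bigvee_i f_i} = \overline{\bigvee_i (e f_i)} = \bigvee_i \overline{e f_i} = (\bigvee_i \o(f_i))(e)$, by (J3) and then (J1).

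The mathematics here is slight; the only thing needing genuine care is the bookkeeping --- keeping the variance of $\vstab^\op$ straight, since composites and joins of its morphisms are taken on the ``backwards'' underlying maps of frames, and checking at each stage that the families one is joining really are compatible, so that the joins invoked actually exist. That, rather than any single computation, is where I would expect a hasty write-up to go astray.
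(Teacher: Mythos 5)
Your proof is correct, and it is exactly the direct verification from (J1)--(J3) (plus the pointwise description of joins in $\vstab^\op$) that the paper has in mind when it calls the proposition "a straightforward consequence of the join restriction axioms" and omits the details. The points you flag as needing care --- compatibility of the families being joined, joins in $\o(A)$ agreeing with those in the hom-poset, and the variance of $\vstab^\op$ --- are indeed the only delicate spots, and you handle them correctly.
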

The next step is an analogue of Proposition~\ref{prop:fundterm}.
\begin{Prop}
If $\C$ is a join restriction category, then the fundamental functor $\O \colon \C \to \vstab^\op$ is a terminal object of $\cat{jrCat}(\C, \vstab^\op)$. 
\end{Prop}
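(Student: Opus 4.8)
The plan is to follow the proof of Proposition~\ref{prop:fundterm} closely, the only genuinely new ingredient being that the transformation constructed there is already compatible with joins. Given a join restriction functor $F \colon \C \to \vstab^\op$, I would first regard it as an ordinary restriction functor into $\cat{Stab}^\op$; Proposition~\ref{prop:fundterm} then supplies a unique total natural transformation $\gamma \colon F \to \o$, where $\o$ is the fundamental functor of $\C$ into $\cat{Stab}^\op$, with components $\gamma_A \colon \o(A) \to FA$, $e \mapsto Fe(\top)$. By the previous proposition, $\O$ followed by the inclusion $\vstab^\op \hookrightarrow \cat{Stab}^\op$ is exactly $\o$; so the data of $\gamma$ is a candidate $2$-cell $F \to \O$ in $\cat{jrCat}$, and---since binary-meet-preservation of each $\gamma_A$ and naturality of $\gamma$ were already checked in Proposition~\ref{prop:fundterm}---the one thing left to verify is that each $\gamma_A$ is a morphism of $\vstab$, that is, that it preserves all joins.

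To see this, I would note that any family $(e_i \mid i \in I)$ of restriction idempotents on $A$ is pairwise compatible, since by~(R2) restriction idempotents commute, so $e_i\overline{e_j} = e_ie_j = e_je_i = e_j\overline{e_i}$; hence $\bigvee_i e_i$ exists in $\C(A,A)$, is again a restriction idempotent by~(J1), and so is the join of the $e_i$ in $\o(A)$. As $F$ is a join restriction functor we have $F(\bigvee_i e_i) = \bigvee_i Fe_i$ in $\vstab^\op(FA,FA)$, and since joins of compatible families in $\vstab^\op$ are computed pointwise, evaluation at $\top \in FA$ gives
\begin{equation*}
\gamma_A\bigl(\textstyle\bigvee_i e_i\bigr) = F\bigl(\textstyle\bigvee_i e_i\bigr)(\top) = \bigl(\textstyle\bigvee_i Fe_i\bigr)(\top) = \textstyle\bigvee_i\bigl(Fe_i(\top)\bigr) = \textstyle\bigvee_i \gamma_A(e_i)\rlap{ ,}
\end{equation*}
as required (the empty family yields preservation of $\bot$). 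Thus $\gamma$ is a $2$-cell of $\cat{jrCat}$. Uniqueness is then immediate, for any total natural transformation $F \to \O$ in $\cat{jrCat}$ is in particular one from $F$ to $\o$ in $\cat{rCat}(\C, \cat{Stab}^\op)$, hence equal to $\gamma$ by Proposition~\ref{prop:fundterm}.

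I do not anticipate any serious obstacle: the substantive content is inherited from Proposition~\ref{prop:fundterm} and from the preceding proposition, which already shows that $\O$ takes values in $\vstab^\op$. The only point needing a little care is to keep straight the three notions of join in play---in the homset $\C(A,A)$, in the sub-poset $\o(A)$ of restriction idempotents, and pointwise in $\vstab^\op$---and to confirm that the pointwise formula for joins in $\vstab^\op$ applies to the family $(Fe_i)$, which it does since these are restriction idempotents on $FA$ and hence compatible.
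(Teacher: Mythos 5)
Your proposal is correct and follows essentially the same route as the paper: reduce to Proposition~\ref{prop:fundterm} to get the unique total transformation $\gamma_A(e) = Fe(\top)$, and then check that each $\gamma_A$ preserves joins via $\gamma_A(\bigvee_i e_i) = F(\bigvee_i e_i)(\top) = (\bigvee_i Fe_i)(\top) = \bigvee_i \gamma_A(e_i)$. The extra details you supply (compatibility of families of restriction idempotents, pointwise joins in $\vstab^\op$, and the uniqueness argument) are exactly the steps the paper leaves implicit.
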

\begin{proof}
Let $F \colon \C \to \vstab^\op$ be a join restriction functor; as in the proof of Proposition~\ref{prop:fundterm}, the total $\gamma \colon F \to \O$ must have components $\gamma_A \colon FA \leftarrow \O(A)$ given by $\gamma_A(e) = Fe(\top)$. The only extra point to check is that each component $\gamma_A$ preserves joins, for which we calculate that:
\begin{equation*}
\gamma_A(\textstyle\bigvee_i f_i) = F(\bigvee _i f_i)(\top) = (\bigvee_i Ff_i)(\top) = \bigvee_{i}(Ff_i(\top)) = \bigvee_{i}(\gamma_A(f_i))\text{ .}\qedhere
\end{equation*}
\end{proof}
It is now an identical argument to that of Section~\ref{sec:2} to conclude that:
\begin{Thm}\label{thm:joinequiv1}
The $2$-category $\cat{jrCat}$ of join restriction categories is $2$-equivalent to the $2$-category $\cat{jrCat}'=\cat{jrCat} \sslash_{\!\hbar\,\,} \cat{jStab}^\op$ whose objects are  hyperconnected join restriction functors $F \colon \C \to \vstab^\op$, whose morphisms are diagrams
\begin{equation*}
\cd[@!C]{
\C \ar[rr]^H \ar[dr]_F & \ltwocell[0.4]{d}{\gamma} & \D \ar[dl]^{G} \\
& \vstab^\op
}
\end{equation*}
in $\cat{jrCat}$, and whose $2$-cells $(H, \gamma) \to (H', \gamma')$ are total natural transformations $\theta \colon H \to H'$ with $\gamma'.G\theta = \gamma$.
\end{Thm}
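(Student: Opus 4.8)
The plan is to repeat the proof of Theorem~\ref{thm:equiv1} essentially verbatim, with $\vstab^\op$ in place of $\cat{Stab}^\op$ and ``join restriction'' in place of ``restriction'' throughout. The one preliminary one needs is the join analogue of the three-way equivalence established at the end of Section~\ref{sec:fund}: for a join restriction category $\C$ and a join restriction functor $F \colon \C \to \vstab^\op$, the conditions ``$F$ is hyperconnected'', ``$F$ is terminal in $\cat{jrCat}(\C, \vstab^\op)$'' and ``$F$ is isomorphic to the fundamental functor $\O$ of $\C$'' are all equivalent. One direction---that $\O$ is terminal---is the preceding Proposition; the remaining implications go through by the identical arguments of Sections~\ref{sec:fund} and~\ref{sec:2}, using that the relevant comparison $2$-cell has components $\O(A) \to \O(FA)$, $e \mapsto Fe$, which are isomorphisms precisely when $F$ is hyperconnected, and that $\O(\vstab^\op) \to \vstab^\op$, $\phi \mapsto \phi(\top)$, is invertible.

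Granting this, I would introduce the evident forgetful $2$-functor $U \colon \cat{jrCat}' \to \cat{jrCat}$, sending $(F \colon \C \to \vstab^\op)$ to $\C$, a morphism $(H, \gamma)$ to $H$, and a $2$-cell $\theta$ to $\theta$; this is well-defined since join restriction functors are in particular restriction functors and the $2$-cells of both $2$-categories are the total natural transformations. That $U$ is surjective on objects is immediate from the Proposition before last: every join restriction category admits a hyperconnected join restriction functor to $\vstab^\op$, namely the induced factorisation of its fundamental functor.

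The main point is that $U$ is $2$-fully faithful, and this is where terminality does the work. Given objects $(F \colon \C \to \vstab^\op)$ and $(G \colon \D \to \vstab^\op)$ of $\cat{jrCat}'$ and any join restriction functor $H \colon \C \to \D$, the composite $GH \colon \C \to \vstab^\op$ is again a join restriction functor (the only new point beyond Theorem~\ref{thm:equiv1} is that $\vstab^\op$ is itself a join restriction category and composites of join restriction functors preserve joins), so by terminality of $F$ there is a unique total $\gamma \colon GH \to F$; hence the $\cat{jrCat}'$-morphisms $(F) \to (G)$ lying over $H$ form a singleton, giving full faithfulness on $1$-cells. For $2$-cells, given $(H,\gamma),(H',\gamma')\colon (F)\to(G)$ and a total $\theta \colon H \to H'$, the composite $\gamma'.G\theta$ is again a total transformation $GH \to F$, hence coincides with $\gamma$ by uniqueness; so the condition $\gamma'.G\theta = \gamma$ appearing in the definition of a $2$-cell of $\cat{jrCat}'$ is automatic, and $U$ is fully faithful on $2$-cells as well. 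A $2$-functor that is $2$-fully faithful and surjective on objects is a $2$-equivalence, so $\cat{jrCat}$ is $2$-equivalent to $\cat{jrCat}'$ as claimed.

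I do not anticipate a real obstacle: every step transcribes the corresponding step in Sections~\ref{sec:fund}--\ref{sec:2}, and the only places where the join hypothesis intervenes---stability of join restriction functors under composition, the fact that $\vstab^\op$ is a join restriction category, and the join-preservation of the comparison maps $\gamma_A$---have all been checked in the results leading up to the theorem. The one point deserving care is to make sure that ``terminal in $\cat{jrCat}(\C,\vstab^\op)$'' is invoked for a \emph{general} hyperconnected $F$, and not merely for the fundamental functor itself, which is precisely what the preliminary equivalence above supplies.
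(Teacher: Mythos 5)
Your proposal is correct and follows essentially the same route as the paper, which proves this theorem by transcribing the argument for Theorem~\ref{thm:equiv1}: the forgetful $2$-functor $U \colon \cat{jrCat}' \to \cat{jrCat}$ is $2$-fully faithful because a hyperconnected (join) restriction functor into $\vstab^\op$ is terminal in its hom-category, and surjective on objects via the factorised fundamental functor. Your write-up merely makes explicit the details the paper leaves to the reader (the join analogue of the hyperconnected/terminal/fundamental equivalence and the terminality argument for full faithfulness on $1$- and $2$-cells), and these are filled in correctly.
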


To give our next reformulation of join restriction categories, we shall identify a notion to which they bear the same relation as do restriction categories to $2$-categories. By an \emph{fc-site}, we mean a category with finite connected limits equipped with a Grothendieck topology. By a \emph{$2$-fc-site}, we mean a $2$-category $\K$ whose hom-categories are fc-sites and for which pre- and postcomposition by $1$-cells preserves both covers and finite connected limits.
\begin{Prop}
If $\C$ is a join restriction category, then $\Gamma \C$ is a $2$-fc-site, where a family of maps $(f_i \leqslant f \mid i \in I)$ in $\Gamma\C(A,B)$ is covering just when  $\bigvee_i f_i = f$.
\end{Prop}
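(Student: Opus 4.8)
The plan is to verify, in turn, the three requirements in the definition of a $2$-fc-site, making systematic use of the order-isomorphism ${\downarrow}f \cong {\downarrow}\overline f$ between the poset of restrictions of a map $f \colon A \to B$ (a down-set of $\Gamma\C(A,B)$) and the principal down-set ${\downarrow}\overline f$ inside $\o(A)$; this isomorphism sends $h \leqslant f$ to $\overline h$, has inverse $e \mapsto fe$, and is order-preserving in both directions because $h = f\overline h$ for $h \leqslant f$ and because $h \leqslant h'$ iff $\overline h \leqslant \overline{h'}$ for $h, h' \leqslant f$. Since $\o(A)$ is a frame (as recorded above), every ${\downarrow}f$ is then a frame; moreover every family of restrictions of a fixed map is compatible (for if $h, h' \leqslant f$ then $h\overline{h'} = f\overline h\,\overline{h'} = f\overline{h'}\,\overline h = h'\overline h$ by (R2)), so the joins appearing below all exist.

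First I would show that each $\Gamma\C(A,B)$ is an fc-site. Finite connected limits in a poset are generated by pullbacks, equalizers being trivial; and the pullback of a cospan $a \to c \leftarrow b$ in $\Gamma\C(A,B)$ is just the meet $a \wedge b$ taken inside the frame ${\downarrow}c$, while an arbitrary finite connected diagram is handled by choosing a spanning tree of its shape and iterating meets, each of which is formed inside some frame ${\downarrow}D(j)$ and so exists. For the topology I would call a sieve $S$ on $f$ --- equivalently a down-set of ${\downarrow}f$ --- covering when $\bigvee S = f$; the sieve generated by a family $(f_i \leqslant f)$ then covers exactly when $\bigvee_i f_i = f$, as in the statement. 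The maximal-sieve axiom is clear; stability along $g \leqslant f$ holds because $\bigvee_{h \in S} h\overline g = (\bigvee_{h \in S} h)\overline g = f\overline g = g$ by (J2), with each $h\overline g$ lying in the pullback sieve $S \cap {\downarrow}g$; and transitivity holds because if $\bigvee S = f$ and $\bigvee(T \cap {\downarrow}h) = h$ for every $h \in S$, then $\bigvee T$ is an upper bound of $S$ and hence equals $f$. (Conceptually this is the canonical topology on the frame ${\downarrow}f$, transported.)

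Next I would check that pre- and postcomposition by $1$-cells of $\Gamma\C$ --- that is, by morphisms of $\C$ --- preserve covers and finite connected limits. Precomposition by $u \colon A' \to A$ and postcomposition by $v \colon B \to B'$ are monotone maps of posets restricting to ${\downarrow}f \to {\downarrow}fu$ and ${\downarrow}f \to {\downarrow}vf$ respectively. Preservation of covers is immediate: $\bigvee_i f_i u = (\bigvee_i f_i)u$ by (J2), and $\bigvee_i vf_i = v(\bigvee_i f_i)$ by (J3). For finite connected limits it suffices, by the spanning-tree reduction, to show these functors preserve binary meets formed inside a down-set ${\downarrow}c$. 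Transporting along the isomorphisms ${\downarrow}c \cong {\downarrow}\overline c$, precomposition by $u$ becomes (the restriction of) the stable map $\om u \colon \o(A) \to \o(A')$, $e \mapsto \overline{eu}$ --- here one uses the identity $\overline{hu} = \overline{\overline h\,u}$, which follows from (R1) and (R4) --- whilst postcomposition by $v$ becomes $e \mapsto \overline{vc} \wedge e$, using the identity $\overline{vh} = \overline{vc}\cdot\overline h$ valid for $h \leqslant c$, an instance of (R3). Both of these are frame homomorphisms on the relevant down-sets (they preserve finite meets and arbitrary joins --- the latter for $\om u$ because each $\om f$ preserves joins, and for meeting with $\overline{vc}$ by frame distributivity), hence in particular preserve binary meets, as needed.

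I expect the genuine obstacle to be exactly this last point: showing that postcomposition preserves connected limits, equivalently that $v(a \wedge b) = va \wedge vb$ whenever $a$ and $b$ are restrictions of a common map $c$. This cannot be argued at the level of the whole poset $\Gamma\C(A,B)$, since postcomposition does \emph{not} preserve arbitrary binary meets there --- only those of pairs with a common upper bound, as already the example of $\cat{Set}_p$ shows. One must instead use that such a pair is automatically compatible, so that $a \wedge b = a\overline b$, and then establish $(vb)\overline a = (vb)\overline{va}$ for compatible $a, b$; this follows by applying $v$ to the compatibility identity $a\overline b = b\overline a$, passing to restrictions via (R3) to obtain $\overline{va}\,\overline b = \overline{vb}\,\overline a$, and combining this with $\overline{va} \leqslant \overline a$ and $\overline{vb} \leqslant \overline b$. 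Everything else amounts to routine bookkeeping with the restriction axioms (R1)--(R4) and the join axioms (J1)--(J3).
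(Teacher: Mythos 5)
Your proof is correct, and its skeleton is the same as the paper's: finite connected limits in each hom-poset reduce to (trivial) equalisers and pullbacks, the pullback of $a, b \leqslant c$ being $a\bar b = b\bar a$; covers and their pullback-stability come from (J2)--(J3); and whiskering is checked separately for covers and for pullbacks. The one genuine divergence is that your emphasis in the last step is inverted relative to the paper's: the paper dispatches precomposition by (R4) (indeed $(a\bar b)u = au\,\overline{bu}$) and asserts that postcomposition ``clearly'' preserves pullbacks, whereas you treat precomposition as routine (transporting along ${\downarrow}c \cong {\downarrow}\bar c$ to the stable map $\om u$) and correctly single out postcomposition as the step needing an actual argument---your compatibility computation is precisely the content behind the paper's ``clearly'', since $v(a\bar b) = va\,\overline{vb}$ does fail for incompatible $a,b$. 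Your sketch of that computation is sound though compressed: from $\overline{va}\,\bar b = \overline{vb}\,\bar a$ one gets, say, $vb\,\overline{va} = vb\,\overline{vb}\,\overline{va}\,\bar b = vb\,\overline{vb}\,\overline{vb}\,\bar a = vb\,\bar a$, using (R1), (R2) and $\overline{vb} \leqslant \bar b$, which is exactly what you indicate. You also verify the Grothendieck-topology axioms (maximal sieve, transitivity) explicitly, where the paper records only stability, and your frame-theoretic viewpoint via ${\downarrow}f \cong {\downarrow}\bar f \subseteq \o(A)$ is a pleasant conceptual gloss the paper does not use---at the mild cost of the spanning-tree reduction, where simply citing that pullbacks and equalisers generate all finite connected limits would suffice.
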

\begin{proof}
$\Gamma\C(A,B)$ is a poset and so trivially has equalisers. It also has  pullbacks: if $f, g \leqslant h$ then $f \times_h g$ is $f \bar g = g \bar f$. Now if $(f_i \leqslant f \mid i \in I)$ is a covering family in $\Gamma\C(A,B)$, then for any $g \leqslant f$,  the pullback family $(f_i \bar g \leqslant g \mid i \in I)$ is again  covering since $\bigvee_i (f_i \bar g) = (\bigvee_i f_i)\bar g = f \bar g = g$ as required. Thus each $\Gamma\C(A,B)$ is an fc-site.
It remains to show that whiskering by $1$-cells preserves covers and finite connected limits. Preservation of covers follows from (J2) and (J3); equalisers are trivially preserved; and as for pullbacks, these are clearly preserved by postcomposition, whilst precomposition preserves them by (R4).\end{proof}

If $\B$ is an fc-site, then we call a functor $p \colon \E \to \B$ \emph{\'etale} if it is a discrete fibration and the corresponding presheaf $\B^\op \to \cat{Set}$ is a $j$-sheaf. If $\K$ is a $2$-fc-site, then we call a $2$-functor $\L \to \K$ \emph{locally \'etale} if each induced functor on homs is \'etale.
\begin{Prop}\label{prop:joinhyperetale}
Let $F \colon \C \to \D$ be a hyperconnected restriction functor between join restriction categories. Then $F$ preserves joins if and only if $\Gamma F \colon \Gamma \C \to \Gamma \D$ is locally \'etale.
\end{Prop}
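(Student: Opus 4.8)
The plan is as follows. Since $F$ is hyperconnected, Proposition~\ref{prop:hyperdisc} already guarantees that $\Gamma F$ is a local discrete fibration; so, to establish the stated equivalence, it suffices to show that $F$ preserves joins precisely when, for each $A, B \in \C$, the discrete fibration $(\Gamma F)_{A,B} \colon \Gamma\C(A,B) \to \Gamma\D(FA,FB)$ corresponds to a \emph{sheaf} for the join topology on $\Gamma\D(FA,FB)$ (which is an fc-site since $\D$ is a join restriction category). So the first step is to make this presheaf explicit. Arguing exactly as in the proof of Proposition~\ref{prop:hyperdisc}, its value at $k \colon FA \to FB$ is $P_{A,B}(k) = \{f \in \C(A,B) : Ff = k\}$, and for $k' \leqslant k$ the restriction map $P_{A,B}(k) \to P_{A,B}(k')$ sends such an $f$ to $fe$, where $e \in \O(A)$ is the unique restriction idempotent with $Fe = \overline{k'}$.

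The second, and really the only substantive, step is to unwind the sheaf condition into restriction-categorical language. A covering family of $k$ is a family $(k_i \leqslant k \mid i \in I)$ with $\bigvee_i k_i = k$, and a matching family of sections over it amounts to a family $(f_i \mid i \in I)$ with $Ff_i = k_i$ and $f_i|_{k_i \wedge k_j} = f_j|_{k_i \wedge k_j}$ for all $i, j$. Here $k_i \wedge k_j = k_i\overline{k_j}$, whose restriction is $\overline{k_i}\,\overline{k_j} = F\overline{f_i}\cdot F\overline{f_j} = F(\overline{f_i}\,\overline{f_j})$ by (R3) and the fact that $F$ is a restriction functor; so by hyperconnectedness the connecting idempotent is $\overline{f_i}\,\overline{f_j}$, whence $f_i|_{k_i \wedge k_j} = f_i\overline{f_i}\,\overline{f_j} = f_i\overline{f_j}$. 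Thus the matching condition is exactly the statement that $(f_i)$ is a compatible family in $\C(A,B)$; note too that $(Ff_i)$ is then automatically compatible in $\D$. So $P_{A,B}$ is a sheaf if and only if, for every compatible family $(f_i)$ in $\C(A,B)$ and every $k$ with $\bigvee_i Ff_i = k$, there is a unique $g \in \C(A,B)$ with $Fg = k$ and $g\overline{f_i} = f_i$ for all $i$.

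Granting this reformulation, both implications fall out quickly. If $F$ preserves joins and $(f_i)$ is a compatible family over a covering family of $k$, then $f \defeq \bigvee_i f_i$ lies in $\C(A,B)$, has $Ff = \bigvee_i Ff_i = k$ by join-preservation, and satisfies $f\overline{f_i} = f_i$ because $f_i \leqslant f$; any other such $g$ has $f_i \leqslant g$ for all $i$, hence $f \leqslant g$, while $F\overline{g} = \overline{Fg} = \overline{k}$, which by (J1) and join-preservation equals $F\overline{f}$, so $\overline{g} = \overline{f}$ by hyperconnectedness and $g = g\overline{g} = g\overline{f} = f$; thus every $P_{A,B}$ is a sheaf. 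Conversely, if every $P_{A,B}$ is a sheaf and $(f_i)$ is a compatible family in $\C(A,B)$ with join $f$, put $k \defeq \bigvee_i Ff_i$; then $(Ff_i \leqslant k)$ is a covering family and $(f_i)$ a matching family of sections, so the sheaf condition supplies $g$ with $Fg = k$ and $g\overline{f_i} = f_i$, whence $f_i \leqslant g$ for all $i$, so $f \leqslant g$ and $Ff \leqslant Fg = k = \bigvee_i Ff_i$; as the reverse inequality $\bigvee_i Ff_i \leqslant Ff$ is automatic, $F(\bigvee_i f_i) = \bigvee_i Ff_i$, i.e.\ $F$ preserves joins.

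I expect the main obstacle to be precisely the bookkeeping in the second step: correctly identifying the presheaf attached to $(\Gamma F)_{A,B}$, keeping track of variance, and above all verifying that matching families of sections coincide with compatible families of morphisms. It is the computation of the connecting idempotent via (R3) together with hyperconnectedness that makes the sheaf condition line up exactly with the join-restriction structure; once that is in place, everything else is a direct unwinding of definitions and the join axioms, and I do not anticipate further difficulty.
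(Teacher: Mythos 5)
Your proposal is correct and follows essentially the same route as the paper: both identify matching families over a cover in $\Gamma\D(FA,FB)$ with compatible families in $\C(A,B)$ via the computation $k_i \times_k k_j = k_i\overline{k_j}$ and the lifted idempotent, and then show that the join supplies the unique amalgamation (and conversely that unique amalgamations force $F(\bigvee_i f_i) \leqslant \bigvee_i Ff_i$). The only cosmetic difference is that you argue uniqueness of the amalgamation via hyperconnectedness ($\overline g = \overline f$), where the paper invokes directly that the discrete fibration $(\Gamma F)_{A,B}$ reflects identity $2$-cells.
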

\begin{proof}
For each $A, B \in \C$, we know by Proposition~\ref{prop:hyperdisc} that $(\Gamma F)_{A,B} \colon \Gamma \C(A,B) \to \Gamma \D(FA,FB)$ is a discrete fibration. It will be \'etale precisely when 
for every cover $(g_i \leqslant g \mid i \in I)$ in $\Gamma D(FA, FB)$, each matching family of elements over it in $\C(A,B)$ admits a unique patching.
To give a matching family over the $g_i$'s is to give maps $(f_i \in \C(A,B) \mid i \in I)$ with $Ff_i = g_i$ for each $i$, and such that $f_{i}{\mathord\mid}_j = f_j{\mathord\mid}_i$ for all $i,j \in I$; here $f_i {\mathord\mid}_j$ is obtained, using the discrete fibration property of $\Gamma F$, as the unique element of $\C(A,B)$ with $f_i {\mathord\mid}_j \leqslant f_i$ and $F(f_i {\mathord\mid}_j) = g_i \times_g g_j$. We know from above that $g_i \times_g g_j = g_i \overline{g_j}$ and thus  $f_i {\mathord\mid}_j = f_i \overline{f_j}$; so to say that the $f_i$'s are a matching family is to say that they are compatible in the sense defined above.

Now let $F$ preserve joins. For any matching family $(f_i \mid i \in I)$ as above, the element $\bigvee_i f_i$ is a patching: it satisfies $f_i \leqslant \bigvee_i f_i$ for each $i$ and $F(\bigvee_i f_i) = \bigvee_i Ff_i = \bigvee_i g_i = g$. Moreover, if $h$ is another patching, so satisfying $f_i \leqslant h$ for each $i$ and $Fh = g$, then $\bigvee_i f_i \leqslant h$ and $Fh = g = F(\bigvee_i f_i)$ implies that $h = \bigvee_i f_i$, since discrete fibrations reflect identities. Thus $\Gamma F$ is locally \'etale.

Suppose conversely that $\Gamma F$ is locally \'etale, and let $(f_i \colon A \to B \mid i \in I)$ be a compatible family. We always have $\bigvee_i(Ff_i) \leqslant F(\bigvee_i f_i)$ and so it suffices to show the converse inequality. The $f_i$'s are a matching family for the cover $(Ff_i \leqslant \bigvee_i Ff_i \mid i \in I)$, and so admit a unique patching $k$ satisfying $Fk = \bigvee_i(Ff_i)$ and $f_i \leqslant k$ for each $i$. Thus $\bigvee_i f_i \leqslant k$ and so $F(\bigvee_i f_i) \leqslant \bigvee_i(Ff_i)$ as required.
\end{proof}
The following is now the analogue in this context of Proposition~\ref{prop:disclift}.
\begin{Prop}\label{prop:joindisclift}
Let $\D$ be a join restriction category, and let $F \colon \C \to \Gamma\D$ be locally \'etale. Then the
unique restriction functor $\hat F \colon \hat \C \to \D$ lifting $F$ is a join restriction functor between join restriction categories.
\end{Prop}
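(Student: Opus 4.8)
The plan is to reduce everything to the join structure already present in $\D$, transported along the hom-wise discrete fibrations $F_{A,B} \colon \C(A,B) \to \Gamma\D(FA,FB)$. Since a locally \'etale $2$-functor is in particular a local discrete fibration, Proposition~\ref{prop:disclift} already supplies the unique restriction functor $\hat F \colon \hat\C \to \D$ with $\Gamma\hat\C = \C$ and $\Gamma\hat F = F$; as $\Gamma\hat\C = \C$, the restriction partial orders on the homs of $\hat\C$ coincide with those of $\C$, so it remains only to construct joins of compatible families in $\hat\C$, verify (J1)--(J3), and check that $\hat F$ preserves those joins. Throughout I would use one elementary consequence of the discrete fibration property: \emph{if $a \leqslant b$ in $\C(A,B)$ and $Fa = Fb$, then $a = b$}, since $a$ and $b$ are then both liftings of $Fb$ along $F_{A,B}$ lying below $b$. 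I would also freely use the standard facts that in any restriction category composition is monotone for $\leqslant$ (stated in Section~\ref{sec:2}), the restriction operation is monotone for $\leqslant$ (immediate from (R3)), and $a \leqslant b$ implies $a\bar b = a$ (immediate from (R2)).

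\emph{Construction of joins.} Given a compatible family $(f_i \colon A \to B \mid i \in I)$ in $\hat\C$, applying the restriction functor $\hat F$ shows $(Ff_i \mid i \in I)$ is compatible in $\D$, so $\bigvee_i Ff_i$ exists and $(Ff_i \leqslant \bigvee_j Ff_j \mid i \in I)$ is a cover in the fc-site $\Gamma\D(FA,FB)$. Exactly as in the proof of Proposition~\ref{prop:joinhyperetale}, the maps $f_i$ form a matching family for this cover precisely because they are compatible (the restriction $f_i{\mathord\mid}_j$ being $f_i\overline{f_j}$); since $F$ is locally \'etale, there is a unique patching, that is, a unique $f \in \C(A,B)$ with $Ff = \bigvee_i Ff_i$ and $f_i \leqslant f$ for all $i$. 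I claim $f = \bigvee_i f_i$ in the restriction order. It is certainly an upper bound; for minimality, let $h$ be any upper bound. Then $f_i \leqslant h$ gives $Ff_i \leqslant Fh$ for all $i$, hence $Ff = \bigvee_i Ff_i \leqslant Fh$, which by definition of $\leqslant$ means $F(h\bar f) = Fh\cdot\overline{Ff} = Ff$; and a short computation with (R2), using $f_i \leqslant h$ and $f_i \leqslant f$, gives $(h\bar f)\overline{f_i} = h\,\overline{f_i}\,\bar f = f_i\bar f = f_i$, so $f_i \leqslant h\bar f$ for all $i$. Thus $h\bar f$ is itself a patching of the matching family, so $h\bar f = f$ by uniqueness, whence $f = h\bar f \leqslant h$. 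So $\hat\C$ has all joins of compatible families, computed as these patchings, and moreover $F(\bigvee_i f_i) = \bigvee_i Ff_i$ by construction --- which is precisely the statement that $\hat F$ preserves joins.

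\emph{Verification of (J1)--(J3).} For (J1): restriction idempotents are always pairwise compatible, so $\bigvee_i \overline{f_i}$ exists; monotonicity of restriction gives $\overline{f_i} \leqslant \overline{\bigvee_j f_j}$ for each $i$, hence $\bigvee_i \overline{f_i} \leqslant \overline{\bigvee_j f_j}$; and both sides have $F$-image $\bigvee_i\overline{Ff_i}$, using that $\hat F$ is a restriction functor and (J1) in $\D$. By the elementary consequence above they are equal. For (J2): given $g \colon A' \to A$ in $\hat\C$, the family $(f_i g \mid i \in I)$ is compatible (use (R4) to rewrite $g\,\overline{f_j g} = \overline{f_j}g$, so that $f_i g\,\overline{f_j g} = f_i\overline{f_j}g$ is symmetric in $i,j$), so $\bigvee_i(f_i g)$ is the patching of $(f_i g)$ over the cover $(Ff_i\cdot Fg \leqslant \bigvee_j Ff_j\cdot Fg)$. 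Since composition is monotone, $(\bigvee_i f_i)g$ is an upper bound of the $f_i g$, whence $\bigvee_i(f_i g) \leqslant (\bigvee_i f_i)g$; and both sides have $F$-image $\bigvee_i(Ff_i\cdot Fg) = (\bigvee_i Ff_i)\cdot Fg$ by (J2) in $\D$, so they are equal by the elementary consequence. Axiom (J3) follows from (J1) and (J2) by~\cite[Lemma 3.1.8]{Guo2012Products}, or may be checked directly in the same manner as (J2) using (J3) in $\D$ together with monotonicity of postcomposition. This shows $\hat\C$ is a join restriction category, and with the join-preservation noted above, that $\hat F$ is a join restriction functor.

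\emph{Main obstacle.} The only step that is not purely formal is identifying the patching of the matching family $(f_i)$ with the join $\bigvee_i f_i$; the crucial observation there is that, for an \emph{arbitrary} upper bound $h$, the element $h\bar f$ is again a patching of the same matching family, which is what pins $f$ down as the least upper bound. Everything else amounts to transporting the join structure of $\D$ up along the discrete fibrations and invoking the join axioms downstairs.
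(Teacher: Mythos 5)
Your proof is correct and follows essentially the same route as the paper: the join of a compatible family is constructed as the unique patching of the corresponding matching family over the cover $(Ff_i \leqslant \bigvee_i Ff_i)$, its minimality is forced by uniqueness of patchings, and the join axioms and join-preservation are transported from $\D$ via uniqueness of liftings. The only cosmetic difference is that where the paper obtains the comparison map below an arbitrary upper bound abstractly from the local discrete fibration property, you exhibit it explicitly as $h\bar f$, and you spell out (J1)--(J3), which the paper dispatches by analogy with Proposition~\ref{prop:disclift}.
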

\begin{proof}
If $(f_i \colon A \to B \mid i \in I)$ are a compatible family in $\hat \C$, then they form a matching family for the cover $(Ff_i \leqslant \bigvee_i Ff_i \mid i \in I)$ in $\D$, and so admit a unique patching $f$ satisfying $Ff = \bigvee_i(Ff_i)$ and $f_i \leqslant f$ for each $i$. We claim that $f = \bigvee_i f_i$. Indeed, if $k \colon A \to B$ with $f_i \leqslant k$ for each $i$, then $\bigvee_i(Ff_i) \leqslant Fk$; now because $\Gamma F$ is a local discrete fibration, we obtain $h \leqslant k$ with $Fh = \bigvee_i(Ff_i)$. But $Ff_i \leqslant \bigvee_i(Ff_i)$ for each $i$ implies $f_i \leqslant h$ for each $i$, whence $h$ is a patching for the $f_i$'s. So $f = h \leqslant k$ as required.
The join restriction category axioms for $\hat \C$ now follow by an argument identical in form to that of Proposition~\ref{prop:disclift}. It is immediate that $\hat F$ is a join restriction functor.
\end{proof}

In our next result, we call a map $f$ in a $2$-fc-site $\K$ \emph{\'etale} if $\K(X,f)$ is an \'etale functor for each $X \in \K$.
\begin{Prop}\label{prop:ettotal}
Let $\C$ be a restriction category. A map $f \colon A \to B$ in $\C$ is total if and only if it is an \'etale map in $\Gamma\C$.
\end{Prop}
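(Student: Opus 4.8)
The plan is to reduce the statement to Proposition~\ref{prop:disctotal}, which already identifies the total maps of $\C$ with the discrete fibrations of $\Gamma\C$, and then to absorb the extra sheaf condition that distinguishes \'etale functors from plain discrete fibrations. One direction is then immediate: every \'etale functor is by definition a discrete fibration, so if $f \colon A \to B$ is an \'etale map in $\Gamma\C$ then each $\Gamma\C(X,f)$ is a discrete fibration, whence $f$ is a discrete fibration in $\Gamma\C$, and so total by Proposition~\ref{prop:disctotal}.

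For the converse, suppose $f$ is total. By Proposition~\ref{prop:disctotal} each $\Gamma\C(X,f) \colon \Gamma\C(X,A) \to \Gamma\C(X,B)$ is already a discrete fibration, so it remains only to show that the presheaf $P$ on $\Gamma\C(X,B)$ corresponding to it is a sheaf for the covering topology. I would first unwind $P$: its fibre over $g$ is $\{a : fa = g\}$, with reindexing along $g' \leqslant g$ sending $a$ to $a\overline{g'}$ (which does lie over $g'$, since $f(a\overline{g'}) = g\overline{g'} = g'$). The crucial observation is that totality of $f$ forces $\overline a = \overline{\overline f a} = \overline{fa} = \overline g$ for every fibre element $a$, using~\cite[Lemma 2.1(iii)]{Cockett2002Restriction}. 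With this in hand one checks that a matching family for $P$ over a covering sieve $S$ of $g$ --- that is, a family $(a_k)_{k\in S}$ with $fa_k = k$ and $a_k\overline{k'} = a_{k'}$ for all $k' \leqslant k$ in $S$ --- is nothing but a compatible family in $\C(X,A)$: for $k_1,k_2 \in S$ the meet $k_1 \wedge k_2$ again lies in $S$, and evaluating the matching condition there, together with $\overline{a_{k_i}} = \overline{k_i}$, yields exactly $a_{k_1}\overline{a_{k_2}} = a_{k_2}\overline{a_{k_1}}$.

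The gluing is then supplied by the join structure of $\C$: since $(a_k)_{k\in S}$ is compatible, $a \defeq \bigvee_{k\in S}a_k$ exists; axiom (J3) gives $fa = \bigvee_k fa_k = \bigvee_k k = g$ (the last step being that $S$ covers $g$), so $a \in P(g)$; and $a_k \leqslant a$ with $\overline{a_k} = \overline k$ gives $a\overline k = a\overline{a_k} = a_k$, so $a$ restricts along $S$ to the given matching family. Uniqueness is forced because any competing $a' \in P(g)$ has every $a_k = a'\overline k \leqslant a'$, hence $a = \bigvee_k a_k \leqslant a'$, and then $fa = g = fa'$ with $\Gamma\C(X,f)$ a discrete fibration gives $a = a'$. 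So $P$ is a sheaf, each $\Gamma\C(X,f)$ is \'etale, and $f$ is an \'etale map in $\Gamma\C$. I do not anticipate a real obstacle: the reduction to Proposition~\ref{prop:disctotal} and the passage between matching and compatible families are bookkeeping, and the join axioms (chiefly (J3), with (J1) implicit for the restrictions) do the substantive work; the one step that genuinely uses the hypothesis is the identity $\overline a = \overline g$ on fibre elements, which is precisely what aligns the restriction-order data of the sieve $S$ with the reindexing in $P$, and hence what makes ``matching family'' and ``compatible family'' coincide.
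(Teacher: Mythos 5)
Your proof is correct and follows essentially the same route as the paper's: the forward direction via Proposition~\ref{prop:disctotal}, and the converse by verifying the sheaf condition, identifying matching families over a cover with compatible families (using $\overline a=\overline{fa}$ from totality) and taking the join as the amalgamation. The only cosmetic difference is in the uniqueness step, where you invoke that the discrete fibration reflects identity $2$-cells rather than the paper's direct computation $\bar f=\bigvee_i\overline{f_i}$, which amounts to the same thing; note also that, as in the paper's own proof, the argument implicitly assumes $\C$ is a \emph{join} restriction category so that the topology and the joins exist.
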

\begin{proof}
If $h \colon A \to B$ is \'etale in $\Gamma \C$, then it is in particular a discrete fibration, and so total by Proposition~\ref{prop:disctotal}. Conversely, if $h$ is total, then it is a discrete fibration in $\Gamma \C$; to show it is \'etale, let $X \in \C$ and consider $(g_i \leqslant g \mid i \in I)$ a cover in $\Gamma \C(X, B)$. Arguing as in Proposition~\ref{prop:joinhyperetale}, to give a matching family over the $g_i$'s is to give a compatible family $(f_i \in \C(X,A) \mid i \in I)$ with $hf_i = g_i$ for each $i$. We must show that there is a unique $f$ with $f_i \leqslant f$ for each $i$ and with $hf = g$. For existence, we may take $f = \bigvee_i f_i$; for uniqueness, observe that any $f$ with these properties satisfies $\bar{f} = \overline{hf} = 
\bar g = \bigvee_i \bar{g_i} = \bigvee_i \overline{hf_i} = \bigvee_i \overline{f_i}$ and also $\bigvee_i f_i \leqslant f$, whence $f = f \bar f = f \bigvee_i \overline{f_i} = \bigvee_i f_i$, as required.
\end{proof}
We can now give an analogue of Theorem~\ref{thm:equiv2}:
\begin{Thm}\label{thm:joinequiv2}
The $2$-category $\cat{jrCat}$ of join restriction categories is $2$-equivalent to the $2$-category $\cat{jrCat}''$ whose objects are locally \'etale maps $F \colon \C \to \Gamma(\vstab^\op)$; whose $1$-cells are diagrams
\begin{equation*}
\cd[@!C@-0.5em]{
\C \ar[rr]^H \ar[dr]_F & \ltwocell[0.4]{d}{\gamma} & \D \ar[dl]^{G} \\
& \Gamma(\vstab^\op)
}
\end{equation*}
with $H$ a $2$-functor and $\gamma$ a $2$-natural transformation with \'etale components, and whose $2$-cells $(H, \gamma) \to (H', \gamma')$ are $2$-natural transformations $\theta \colon H \to H'$ with $\gamma'.G\theta = \gamma$.
\end{Thm}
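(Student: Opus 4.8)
\emph{Proposed proof.} The plan is to prove this exactly as Theorem~\ref{thm:equiv2} was proved: one shows that $\cat{jrCat}''$ is $2$-equivalent to the $2$-category $\cat{jrCat}'$ of Theorem~\ref{thm:joinequiv1}, after which the result follows by composing with the $2$-equivalence established there. Throughout, one substitutes for Propositions~\ref{prop:hyperdisc}, \ref{prop:disclift} and~\ref{prop:disctotal} their join-analogues Propositions~\ref{prop:joinhyperetale}, \ref{prop:joindisclift} and~\ref{prop:ettotal}. Concretely, I would define a $2$-functor $\Phi \colon \cat{jrCat}' \to \cat{jrCat}''$ which sends a hyperconnected join restriction functor $F \colon \C \to \vstab^\op$ to $\Gamma F \colon \Gamma\C \to \Gamma(\vstab^\op)$; this is locally \'etale by Proposition~\ref{prop:joinhyperetale}, since $F$ is hyperconnected and preserves joins. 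On $1$- and $2$-cells one simply applies $\Gamma$; the components of the structural $2$-cells, being total maps of $\vstab^\op$, are \'etale maps of $\Gamma(\vstab^\op)$ by Proposition~\ref{prop:ettotal}, so that $\Phi$ is well defined.

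It then remains to show $\Phi$ is a $2$-equivalence. For essential surjectivity on objects, given a locally \'etale $F \colon \C \to \Gamma(\vstab^\op)$, note it is in particular a local discrete fibration, so that Propositions~\ref{prop:disclift} and~\ref{prop:hyperdisc} produce a hyperconnected restriction functor $\hat F \colon \hat\C \to \vstab^\op$ with $\Gamma\hat F = F$, which Proposition~\ref{prop:joindisclift} upgrades to a hyperconnected join restriction functor between join restriction categories; thus $\hat F$ is an object of $\cat{jrCat}'$ with $\Phi\hat F = F$. For $2$-full-faithfulness we may, by the above, assume the objects of $\cat{jrCat}''$ in play are of the form $\Gamma\hat F$, $\Gamma\hat G$ for join restriction categories $\hat\C$, $\hat\D$. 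Given a $1$-cell $(H,\gamma)$ of $\cat{jrCat}''$ between these, the argument of Theorem~\ref{thm:equiv2} applies (all the functors and transformation-components in sight are local discrete fibrations resp.\ discrete fibrations, the latter because \'etale components are total by Proposition~\ref{prop:ettotal}) to yield a unique restriction functor $\hat H \colon \hat\C \to \hat\D$ with $\Gamma\hat H = H$, together with a unique total $\varphi \colon \hat G\hat H \Rightarrow \hat F$ with $\Gamma\varphi = \gamma$; and the treatment of $2$-cells---showing any $2$-cell $\theta$ of $\cat{jrCat}''$ has total components, using that $\hat G$ is hyperconnected hence reflects totality, together with cancellativity of total maps---is likewise identical to that in Theorem~\ref{thm:equiv2}. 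The one genuinely new point, and the main obstacle, is to check that $\hat H$ \emph{preserves joins}, so is a $1$-cell of $\cat{jrCat}'$.

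I would handle this with two observations. First: any restriction functor $P \colon \hat\C \to \vstab^\op$ out of a join restriction category which admits a total natural transformation $\psi \colon P \Rightarrow Q$ to a join-preserving restriction functor $Q$ is itself join-preserving. Indeed, for a compatible family $(f_i)$ with join $f$, one always has $\bigvee_i Pf_i \leqslant Pf$; on the other hand, using that $Q$ preserves joins, axioms (J2) and (J3) in $\vstab^\op$, and the naturality of $\psi$, one computes that $\psi_B \cdot Pf = \psi_B \cdot (\bigvee_i Pf_i)$, whence, taking restrictions and using totality of $\psi$ together with~\cite[Lemma 2.1(iii)]{Cockett2002Restriction}, $\overline{Pf} = \overline{\bigvee_i Pf_i}$; combining this with the previous inequality and (R1) gives $Pf = \bigvee_i Pf_i$. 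Applying this with $P = \hat G\hat H$, $Q = \hat F$ and $\psi = \varphi$ shows $\hat G\hat H$ preserves joins. Second: a hyperconnected restriction functor reflects joins, in the sense that if $(g_i \leqslant g)$ is a family with $\hat G g = \bigvee_i \hat G g_i$ then $g = \bigvee_i g_i$; for the join $g' \defeq \bigvee_i g_i$ satisfies $g' \leqslant g$ and $\hat G g' = \hat G g$, so $\overline{g'} = \overline g$ by hyperconnectedness, and hence $g' = g\overline{g'} = g\overline g = g$. Now for a compatible family $(f_i)$ in $\hat\C$ with join $f$ we have $\hat Hf_i \leqslant \hat Hf$ and, by the first observation, $\hat G(\hat Hf) = \hat G\hat H(\bigvee_i f_i) = \bigvee_i \hat G\hat H(f_i) = \bigvee_i \hat G(\hat Hf_i)$, so the second observation forces $\hat Hf = \bigvee_i \hat Hf_i$, as needed. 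With this, $\Phi$ is essentially surjective and $2$-fully faithful, hence a $2$-equivalence, and the theorem follows; the remaining verifications (functoriality of $\Phi$ and the claimed uniqueness statements) are routine, being formally the same as in Sections~\ref{sec:2}--\ref{sec:3}.
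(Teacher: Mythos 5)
Your proposal is correct and takes essentially the same route as the paper: it reduces to a $2$-equivalence between $\cat{jrCat}'$ and $\cat{jrCat}''$, obtains surjectivity on objects from Propositions~\ref{prop:joinhyperetale} and~\ref{prop:joindisclift}, reuses the argument of Theorem~\ref{thm:equiv2} to lift $H$ and the $2$-cells, and then isolates join-preservation of the lifted functor as the one new point. Your treatment of that point---via the two lemmas that a restriction functor admitting a total transformation to a join-preserving one is join-preserving, and that hyperconnected functors reflect joins---is just a modular repackaging of the paper's direct computation (which reduces to restriction idempotents and checks the required equality after postcomposing with the hyperconnected $G$), resting on the same ingredients: naturality and totality of $\gamma$ together with hyperconnectedness of $G$.
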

\begin{proof}
Arguing as in Theorem~\ref{thm:equiv2}, it suffices to construct a $2$-equivalence between $\cat{jrCat}''$ and the $\cat{jrCat}'$ of Theorem~\ref{thm:joinequiv1}. By Proposition~\ref{prop:joinhyperetale}, there is a $2$-functor $\cat{jrCat}' \to \cat{jrCat''}$, which by Propositions~\ref{prop:joinhyperetale} and~\ref{prop:joindisclift} is surjective on objects. To show $2$-fully faithfulness, consider a diagram
\begin{equation*}
\cd[@!C@-0.5em]{
\Gamma\C \ar[rr]^H \ar[dr]_{\Gamma F} & \ltwocell[0.4]{d}{\gamma} & \Gamma\D \ar[dl]^{\Gamma G} \\
& \Gamma(\vstab^\op)
}
\end{equation*}
with $\gamma$ a $2$-natural transformation whose components are discrete fibrations. Postcomposing with the local discrete fibration $\Gamma(\vstab^\op) \to \Gamma \cat{Stab}^\op$ and applying Theorem~\ref{thm:equiv2}, we conclude that $H$ lifts to a restriction functor $\C \to \D$; we must furthermore show that it preserves joins. Given a compatible family in $\C(A,B)$, we always have $\bigvee_i Hf_i \leqslant H(\bigvee_i f_i)$, so it suffices to show that $\overline {\bigvee_i Hf_i} = \overline{H(\bigvee_i f_i)}$, or equally that $\bigvee_i H\bar f_i = H(\bigvee_i \bar f_i)$. Since $G$ is hyperconnected, it suffices to prove this last equality on postcomposition with $G$; which follows from the calculation
\begin{equation*}
\textstyle G(\bigvee_i H\bar f_i) = \bigvee_i GH\bar f_i = \bigvee_i (F\bar f_i . \gamma_A) = (\bigvee_i F\bar f_i).\gamma_A = F(\bigvee_i \bar f_i).\gamma_A = GH(\bigvee_i \bar f_i)
%
\end{equation*}
in $\vstab^\op$, whose second and fifth equalities arise as in the proof of Theorem~\ref{thm:equiv2}. This proves fully faithfulness on $1$-cells; that on $2$-cells is as before.
\end{proof}

Finally, we are ready to show how join restriction categories may be viewed as enriched categories. In preparation for this, we recall a result of Brian Day allowing us to ``locally localise'' a bicategory.
\begin{Prop}[\cite{Day1973Note}]
Let $\L$ be a bicategory and $\Sigma$ a class of $2$-cells of $\L$ closed under whiskering with $1$-cells on each side. Suppose that for each $A,B \in \L$, the objects in $\L(A,B)$ which are orthogonal to $\Sigma \cap \L(A,B)$ constitute a full reflective subcategory $\L_\Sigma(A,B)$ with reflector $\ell_{A,B} \colon \L(A,B) \to \L_\Sigma(A,B)$, say.
Then there is a bicategory $\L_\Sigma$ with homs given as above and composition law
\begin{equation*}
\L_\Sigma(B,C) \times \L_\Sigma(A,B) \hookrightarrow \L(B,C) \times \L(A,B) \xrightarrow{\circ} \L(A,C) \xrightarrow{\ell} \L_\Sigma(A,C)\rlap{ .}
\end{equation*}
Moreover, the maps $\ell_{A,B}$ assemble into a homomorphism of bicategories $\ell \colon \L \to \L_\Sigma$ which exhibit $\L_\Sigma$ as the localisation of $\L$ with respect to the class of $2$-cells $\Sigma$.
\end{Prop}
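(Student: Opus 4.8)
The plan is to take the hom-categories $\L_\Sigma(A,B)$, the reflectors $\ell_{A,B}$ and the full inclusions $i_{A,B}\colon\L_\Sigma(A,B)\hookrightarrow\L(A,B)$ as given by hypothesis, and then to carry out three tasks in sequence: transport the bicategory structure of $\L$ along the reflections to make $\L_\Sigma$ a bicategory with the advertised composition; equip $\ell$ with homomorphism structure; and verify the universal property. Everything rests on a single lemma, which I would prove first. Since $\L_\Sigma(A,B)$ is the full subcategory of $\Sigma$-orthogonal objects and is reflective, $\ell_{A,B}$ is---up to equivalence---the localisation of $\L(A,B)$ at $\Sigma\cap\L(A,B)$; hence any functor out of $\L(A,B)$ inverting $\Sigma\cap\L(A,B)$ inverts every $2$-cell that $\ell_{A,B}$ inverts, in particular each reflection unit $\eta_U\colon U\to i_{A,B}\ell_{A,B}U$. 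Combined with the closure of $\Sigma$ under whiskering, this yields: for $W\in\L(B,C)$ the functor $\ell_{A,C}(W\circ\thg)\colon\L(A,B)\to\L_\Sigma(A,C)$ inverts all the units $\eta_U$, and symmetrically $\ell_{A,C}(\thg\circ U)$ inverts all the units $\eta_W$. So $\ell_{A,C}(\thg\circ\thg)$ is ``blind to the reflections'': the whiskered units induce natural isomorphisms $\ell_{A,C}(W\circ U)\cong\ell_{A,C}(i\ell W\circ U)\cong\ell_{A,C}(i\ell W\circ i\ell U)$.

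Granting this, I would define the composition of $\L_\Sigma$ to be $W\otimes U\defeq\ell_{A,C}(W\circ U)$---the functor displayed in the statement---and the identity at $A$ to be $\mathbf 1_A\defeq i_{A,A}\ell_{A,A}(1^{\L}_A)$. For associativity one compares $\ell\bigl(i\ell(W\circ V)\circ U\bigr)$ with $\ell\bigl(W\circ i\ell(V\circ U)\bigr)$; the lemma collapses both occurrences of $i\ell(\thg)$ inside the outer $\ell$, after which the associator of $\L$ provides the required invertible $2$-cell, and the unitors of $\L_\Sigma$ arise the same way from those of $\L$ (the left unitor using also that $\ell_{A,B}i_{A,B}\cong\id$). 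Pentagon and triangle hold because they hold in $\L$ and because every comparison $2$-cell just built is the image under $\ell_{A,D}$ of a whiskering of a coherence cell of $\L$, so the pasting identities to be checked are images under $\ell$ of pasting identities already valid in $\L$. The homomorphism structure on $\ell$ (identity on objects, $\ell_{A,B}$ on homs) is read off the same way: its comparison cell $\ell_{B,C}(W)\otimes\ell_{A,B}(U)=\ell(i\ell W\circ i\ell U)\to\ell(W\circ U)$ is one of the isomorphisms of the lemma, its unit comparison $\mathbf 1_A\to\ell(1^{\L}_A)$ is the reflection unit, and coherence is again inherited from $\L$; in particular $\ell$ comes out a genuine homomorphism with invertible comparison cells.

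For the universal property, let $F\colon\L\to\M$ be a homomorphism sending every $2$-cell in $\Sigma$ to an isomorphism. Then each $F_{A,B}\colon\L(A,B)\to\M(FA,FB)$ inverts $\Sigma\cap\L(A,B)$ and so, by the universal property of the reflective localisation in $\cat{Cat}$, factors as $\bar F_{A,B}\circ\ell_{A,B}$ for an essentially unique $\bar F_{A,B}$. Using that $F$ also inverts the reflection units (by the lemma again), one checks that the $\bar F_{A,B}$ assemble into a homomorphism $\bar F\colon\L_\Sigma\to\M$ with $\bar F\ell\cong F$; the essential uniqueness of each componentwise factorisation propagates to essential uniqueness of $\bar F$, and an entirely parallel argument shows that precomposition with $\ell$ is fully faithful on pseudonatural transformations and modifications. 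This is exactly the statement that $\ell$ exhibits $\L_\Sigma$ as the localisation of $\L$ at $\Sigma$.

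The step I expect to be the genuine (if routine) obstacle is the coherence bookkeeping of the second paragraph: confirming that the transported associator and unitors satisfy pentagon and triangle and that $\bar F$ is a bona fide homomorphism, which amounts to tracking through several pasting diagrams that the reflection units cancel consistently. All the conceptual content, however, is concentrated in the opening lemma---identifying $\ell_{A,B}$ with localisation at $\Sigma\cap\L(A,B)$ and inferring that it, together with its whiskered companions $\ell_{A,C}(W\circ\thg)$ and $\ell_{A,C}(\thg\circ U)$, inverts the reflection units---and once that is available the remaining verifications are formal.
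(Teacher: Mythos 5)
Your overall architecture---define $W\otimes U=\ell(W\circ U)$, transport the coherence cells of $\L$ along $\ell$, and verify the universal property hom-wise---is the standard one, and your observation that each reflector $\ell_{A,B}$ inverts $\Sigma\cap\L(A,B)$ is correct. But the opening lemma carrying all the weight is false: a reflective full subcategory of $\Sigma$-orthogonal objects is the localisation of $\L(A,B)$ at the class of \emph{all} $2$-cells inverted by $\ell_{A,B}$, and this class may be strictly larger than the saturation of $\Sigma\cap\L(A,B)$; a functor inverting $\Sigma\cap\L(A,B)$ need not invert the reflection units. Concretely, take the poset $0\le 1\le 2\le\cdots\le\infty$ and $\Sigma=\{\,n\to n+1\mid n\in\mathbb N\,\}$: the $\Sigma$-orthogonal objects are exactly $\{\infty\}$, a reflective subcategory with units $\eta_n\colon n\to\infty$, yet the monotone map to the chain $\{0<1\}$ sending every finite $n$ to $0$ and $\infty$ to $1$ inverts all of $\Sigma$ without inverting $\eta_0$. (Making this poset the single hom of a one-object bicategory with composition $\max$ even shows that the ``localisation'' universal property needs careful reading for a general whiskering-closed $\Sigma$.) This is not a cosmetic slip: the invertibility of $\ell(W\circ\eta_U)$ and $\ell(\eta_W\circ U)$, which you need to build the associativity and unit constraints of $\L_\Sigma$ and to make $\ell$ a homomorphism, and the factorisation of a $\Sigma$-inverting $F_{A,B}$ through $\ell_{A,B}$ in your last paragraph, are all instances of exactly this false claim, so every stage of the argument is affected.

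What is genuinely needed at this point is an extra input beyond ``orthogonal objects are reflective''. One sufficient hypothesis is that the hom-composition admits right extensions (a biclosedness condition on $\L$): then $\Hom(\eta_W\circ U,Z)$ transposes to $\Hom(\eta_W,\Ran_U Z)$, and whiskering-closure of $\Sigma$ shows $\Ran_U Z$ is again orthogonal, giving the required bijection; this is how the compatibility is obtained in Day's reflection-theorem setting, and it does hold in the paper's intended example $\L=\P\K$, whose homs are presheaf categories. Another sufficient hypothesis is that $\Sigma$ be saturated, i.e.\ consist of all $2$-cells inverted by the hom-wise reflections---then the units lie in $\Sigma$ and your argument goes through verbatim; this is precisely the choice made in the paper's application (Proposition~\ref{prop:2site} takes $\Sigma$ to be the $2$-cells inverted by sheafification in each hom). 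Note also that the paper offers no proof of this proposition to compare against: it is quoted from Day's note, where the requisite compatibility is secured by hypotheses of the above kind. As written, your proof does not supply that ingredient, so the construction of the comparison isomorphisms, the coherence argument, and the universal property all remain unproved.
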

The particular instance of this result which will be relevant for us is the following.
\begin{Prop}\label{prop:2site}
If $\K$ is a $2$-fc-site, then there is a bicategory $\cat{Sh}(\K)$ with the same objects as $\K$, and with hom-categories $\cat{Sh}(\K)(A,B) = \cat{Sh}(\K(A,B))$.
\end{Prop}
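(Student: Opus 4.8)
The plan is to realise $\cat{Sh}(\K)$ as an instance of the preceding localisation construction, applied to the bicategory $\P\K$ associated to $\K$ by Day's construction. Recall that $\P\K$ has the objects of $\K$, hom-categories $\P\K(A,B) = [\K(A,B)^\op, \cat{Set}]$, identities the representables $\Y(1_B)$, and composition given by Day convolution $\otimes$. For the class $\Sigma$ I would take all $2$-cells of $\P\K$ that are \emph{local isomorphisms}: a morphism of presheaves in some $\P\K(A,B)$ lies in $\Sigma$ just when it is inverted by the associated-sheaf functor $\ell_{A,B} \colon [\K(A,B)^\op, \cat{Set}] \to \cat{Sh}(\K(A,B))$ of the fc-site $\K(A,B)$. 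For each $A,B$, the objects of $\P\K(A,B)$ orthogonal to $\Sigma \cap \P\K(A,B)$ are exactly the sheaves on $\K(A,B)$, and these form a full reflective subcategory with reflector $\ell_{A,B}$, by the standard theory of sheaves on the site $\K(A,B)$. So, once we verify that $\Sigma$ is closed under whiskering by $1$-cells on each side, the preceding proposition produces a bicategory $\P\K_\Sigma$ with the objects of $\K$, with hom-categories $\cat{Sh}(\K(A,B))$, and with composition
\begin{equation*}
\cat{Sh}(\K(B,C)) \times \cat{Sh}(\K(A,B)) \hookrightarrow \P\K(B,C) \times \P\K(A,B) \xrightarrow{\otimes} \P\K(A,C) \xrightarrow{\ell} \cat{Sh}(\K(A,C))\rlap{ ;}
\end{equation*}
we then set $\cat{Sh}(\K) \defeq \P\K_\Sigma$.

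The substance of the argument is the whiskering-closure of $\Sigma$; by symmetry I treat whiskering on the left by a $1$-cell $W \colon B \tor C$, i.e.\ the functor $W \otimes (\thg) \colon \P\K(A,B) \to \P\K(A,C)$. This has a right adjoint, namely the Day internal hom $[W, (\thg)]$, so the composite $\ell_{A,C} \circ (W \otimes (\thg))$ has right adjoint $[W, (\thg)] \circ i_{A,C}$, where $i_{A,C}$ is the inclusion of sheaves. By the universal property of $\ell_{A,B}$, saying that $W \otimes (\thg)$ carries local isomorphisms to local isomorphisms is the same as saying that $\ell_{A,C} \circ (W \otimes (\thg))$ inverts local isomorphisms, which in turn holds precisely when its right adjoint sends each sheaf $S$ on $\K(A,C)$ to an object of $\P\K(A,B)$ orthogonal to $\Sigma \cap \P\K(A,B)$, that is, to a sheaf on $\K(A,B)$. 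So it suffices to show that $[W, S]$ is a sheaf for every sheaf $S$. Writing $W$ as a colimit of representables $\Y g_i$ and using that $[\thg, S]$ turns colimits into limits, we reduce to the case $W = \Y g$ for a $1$-cell $g \colon B \to C$, where $[\Y g, S] \cong (g \circ (\thg))^\ast S$ is the restriction of $S$ along postcomposition $g \circ (\thg) \colon \K(A,B) \to \K(A,C)$; a general $[W,S]$ is then a limit of such restrictions, hence still a sheaf.

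The last thing to check is that restriction along $g \circ (\thg)$ preserves sheaves, and this is exactly what the $2$-fc-site axioms deliver: postcomposition by $g$ sends covering families to covering families and preserves finite connected limits. Indeed, for a covering family $(f_i \leqslant f \mid i \in I)$ in $\K(A,B)$ and a sheaf $S$ on $\K(A,C)$, the sheaf condition for $(g \circ (\thg))^\ast S$ at $f$ asserts that $S(g f) \to \prod_i S(g f_i) \rightrightarrows \prod_{i,j} S(g(f_i \times_f f_j))$ is an equaliser; since $g \circ (\thg)$ preserves the pullbacks $f_i \times_f f_j$ (which are connected limits) and carries $(f_i \leqslant f)$ to the covering family $(g f_i \leqslant g f)$ of $\K(A,C)$, this equaliser is precisely the one expressing the sheaf condition for $S$, and so holds. (No terminal object is needed, which is why ``finite connected limits'' suffices.) This completes the verification, and the preceding localisation proposition then delivers $\cat{Sh}(\K)$ with the stated objects and hom-categories, together with the localising homomorphism $\ell \colon \P\K \to \cat{Sh}(\K)$. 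I expect the main obstacle to be precisely this chain: recognising the right adjoint of convolution-with-a-representable as a restriction functor, and then checking, from the $2$-fc-site axioms alone, that such a restriction preserves sheaves.
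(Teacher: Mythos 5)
Your proof is correct, and its skeleton is the paper's: both apply Day's local localisation to $\L = \P\K$ with $\Sigma$ the $2$-cells inverted by hom-wise sheafification, note that the only substantive hypothesis is stability of $\Sigma$ under whiskering, and reduce that to whiskering by representables using the colimit decomposition of $1$-cells. Where you genuinely diverge is in how the representable case is verified. The paper stays on the left-adjoint side: whiskering by $\Y f$ is $\Lan_{\K(f,C)}$, which preserves finite connected limits (hence monomorphisms) and all colimits, hence epi--mono factorisations; since a covering sieve is the mono part of the factorisation of $\sum_i \Y g_i \to \Y g$, its image is the sieve generated by the composite family, which is covering because composition preserves covers, so $\Lan_{\K(f,C)}$ sends covering sieves---and hence, as sheafification is the localisation at covering sieves, all of $\Sigma$---into $\Sigma$. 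You instead pass to the right adjoint: whiskering by $W$ preserves local isomorphisms iff $[W,\thg]$ preserves sheaves, and for $W = \Y g$ this internal hom is restriction of presheaves along the composition functor $g \circ (\thg) \colon \K(A,B) \to \K(A,C)$, which preserves sheaves because that functor preserves covers and the pullbacks appearing in the sheaf condition, i.e.\ it is a morphism of sites in the classical sense. Your route makes the key stability a standard fact (``restriction along a cover- and pullback-preserving functor preserves sheaves''), at the cost of invoking the Day internal hom and the orthogonality characterisation of sheaves; the paper's route is more self-contained, needing only elementary properties of $\Lan$. One small point of hygiene: your sheaf-condition check is phrased for covering families and their pullbacks, whereas the topology on an fc-site is given by arbitrary covering sieves; since the hom-categories have, and $g \circ (\thg)$ preserves, the relevant pullbacks, the family and sieve formulations agree in the usual way, but this deserves a sentence.
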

\begin{proof}
We apply the preceding result, taking $\L = \P \K$ and taking $\Sigma$ to comprise all $2$-cells inverted by sheafification in each hom. The result is immediate so long as we can show that $\Sigma$ is stable under whiskering by $1$-cells. It is enough to do so with respect to whiskering by representable $1$-cells, since every $1$-cell in $\P \K$ can be written as a colimit of representables; composition in $\P \K$ is cocontinuous in each variable; and $\Sigma$ is stable under colimits. 

Given $f \colon A \to B$ in $\K$, the precomposition functor $(\thg) \circ \Y f \colon \P \K(B,C) \to \P \K(A,C)$ is given by left Kan extension along $\K(f,C)$. We must show that this maps $\Sigma$ into itself. Equivalently, we may show that the composite
\[
\P \K(B,C) \xrightarrow{\Lan_{\K(f,C)}} \P \K(A,C) \xrightarrow{\ell} \mathrm{Sh}(\K(A,C))
\]
extends through $\cat{Sh}(\K(B,C))$. This latter category is equally well the localisation of $\P \K(B,C)$ at the covering sieves for its topology, so it suffices to show that the displayed composite inverts each covering sieve; or equally that $\Lan_{\K(f,C)}$ sends covering sieves to maps in $\Sigma$. By assumption, $\K(f,C)$ preserves finite connected limits, and thus so does $\Lan_{\K(f,C)}$. In particular it preserves monomorphisms; of course, it also preserves colimits, and so it preserves epi-mono factorisations.

Now if $\phi \rightarrowtail Yg$ is the sieve generated by a family $(\alpha_i \colon g_i \to g \mid i \in I)$ in $\K(B,C)$, then it is the second half of the epi-mono factorisation of $\sum_i Yg_i \to Yg$; thus its image under $\Lan_{\K(f,C)}$ is the second half of the epi-mono factorisation of $\sum_i Y(g_if) \to Y(gf)$, and hence a covering sieve, since $(\thg) \circ f$ preserves covers.
This proves that $\Sigma$ is stable under precomposition with $1$-cells; replacing $\K$ by $\K^\op$ proves the same for postcomposition.
\end{proof}
We may now prove an analogue of Proposition~\ref{prop:wood}, identifying $\cat{Sh}(\K)$-enriched categories with locally \'etale $2$-functors $\C \to \K$, and using this, we may identify join restriction categories with $\cat{Sh}(\Gamma(\vstab^\op))$-enriched categories. However, as before, the enriched functors between $\cat{Sh}(\K)$-enriched categories are too limited to capture the general join restriction functors. As before, we rectify this by passing to a suitable weak double category, and enriching over that.

Suppose, then, that $\K$ is a $2$-fc-site; we define a weak double category $\mathbb S\cat{h}(\K)$ as follows. Its objects are those of $\K$, a vertical arrow $A \to B$ is an \'etale map $B \to A$ in $\K$, a horizontal arrow $A \tor B$ is a sheaf $U \in \cat{Sh}(\K(A,B))$, whilst a cell of the form~\eqref{eq:typcell} is a $2$-cell
\begin{equation*}
\cd{
  A \ar@{<-}[d]_{\ell(\Y f)} \ar[r]^{U} \dtwocell{dr}{\alpha} & C \ar@{<-}[d]^{\ell(\Y g)} \\
  B \ar[r]_{V}  & D
}
\end{equation*}
in $\cat{Sh}(\K)$. Composition of vertical morphisms is as in $\K$, that of horizontal morphisms is as in $\cat{Sh}(\K)$ and cell composition is given by pasting in $\cat{Sh}(\K)$. 

An argument identical in form to that of Proposition~\ref{prop:genwood} now proves that:
\begin{Prop}\label{prop:joingenwood}
If $\K$ is a $2$-fc-site, then $\mathbb S \cat{h}(\K)\text-\cat{Cat}$ is $2$-equivalent to the $2$-category $\cat{2}\text-\cat{Cat}\mathbin{\sslash_{\textrm{\upshape l\'et}}} \K$ whose objects are locally \'etale $2$-functors $F \colon \C \to \K$ with $\C$ locally small; whose morphisms are diagrams
\begin{equation*}
\cd[@!C@-0.5em]{
\C \ar[rr]^H \ar[dr]_F & \ltwocell[0.4]{d}{\gamma} & \D \ar[dl]^{G} \\
& \K
}
\end{equation*}
with $H$ a $2$-functor and $\gamma$ a $2$-natural transformation with \'etale components; and whose $2$-cells $(H, \gamma) \to (H', \gamma')$ are $2$-natural transformations $\theta \colon H \to H'$ with $\gamma'.G\theta = \gamma$.
\end{Prop}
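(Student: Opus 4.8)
The plan is to repeat, \emph{mutatis mutandis}, the proof of Proposition~\ref{prop:genwood}, systematically replacing presheaves by sheaves, discrete fibrations by \'etale functors, and the bicategory $\P\K$ by $\cat{Sh}(\K)$. The one new ingredient required is the sheaf-theoretic counterpart of Lemma~\ref{lem:otherdesc}, which I would establish first: given objects $A,B,C,D$ of $\K$, \'etale maps $f, g$ of $\K$ underlying vertical arrows of $\mathbb S\cat{h}(\K)$, and sheaves $U \in \cat{Sh}(\K(A,C))$ and $V \in \cat{Sh}(\K(B,D))$, to give a cell of $\mathbb S\cat{h}(\K)$ with these boundaries is equally to give a functor $\alpha \colon \mathrm{el}\,U \to \mathrm{el}\,V$ making the square of~\eqref{eq:otherdesc} commute --- where now $\mathrm{el}$ denotes the category of elements of a \emph{sheaf}, so that $\pi_U$ and $\pi_V$ are \'etale functors. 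The argument is that of Lemma~\ref{lem:otherdesc}: composition in $\cat{Sh}(\K)$ being the sheafification of Day convolution, such a cell is a map $\ell\Lan_{\K(f,C)}(U) \to \ell\Lan_{\K(B,g)}(V)$ in $\cat{Sh}(\K(B,C))$; but the functor $\K(B,g)$ is \'etale, since $g$ is an \'etale map of $\K$, and left Kan extension along an \'etale functor preserves sheaves (this is the ``dependent sum'' along an \'etale map, a fact of the same kind as those underlying the construction of $\cat{Sh}(\K)$ in Proposition~\ref{prop:2site}); hence $\ell\Lan_{\K(B,g)}(V) = \Lan_{\K(B,g)}(V)$, and, by the reflection $\ell$, the cell is just a map of presheaves $\Lan_{\K(f,C)}(U) \to \Lan_{\K(B,g)}(V)$. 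The remaining adjunction manipulations of Lemma~\ref{lem:otherdesc} then transport this datum to the required $\alpha$, which is a morphism of \'etale functors.

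Armed with this lemma, I would define a $2$-functor $\Lambda \colon \mathbb S\cat{h}(\K)\text-\cat{Cat} \to \cat{2}\text-\cat{Cat}\mathbin{\sslash_{\textrm{\upshape l\'et}}} \K$ exactly as in Proposition~\ref{prop:genwood}. On objects, for a $\mathbb S\cat{h}(\K)$-category $\C$ we let $\Lambda\C$ have the objects of $\C$ and hom-categories $(\Lambda\C)(x,y) = \mathrm{el}\,\C(x,y)$; since each $\C(x,y)$ is a sheaf, the resulting projection $\pi_\C \colon \Lambda\C \to \K$ is locally \'etale, and the composition and units of $\Lambda\C$ are induced by those of $\C$. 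On a $\mathbb S\cat{h}(\K)$-functor $H$, the structure cells $H_{x,y}$ become, via the lemma just proved, functors $(\Lambda H)_{x,y}$ fitting into commuting squares of the form~\eqref{eq:nat}; these assemble, using the $\mathbb S\cat{h}(\K)$-functor axioms, into a $2$-functor $\Lambda H$ together with a $2$-natural transformation $\gamma_H$ whose components are the \'etale maps $H_x$, the commutativity of the squares being precisely the naturality of $\gamma_H$. On a $\mathbb S\cat{h}(\K)$-natural transformation $\alpha$, one proceeds as in Proposition~\ref{prop:genwood}, using the Yoneda lemma and passage to categories of elements to obtain the components $(\Lambda\alpha)_x$ of a $2$-natural transformation $\Lambda H \Rightarrow \Lambda K$ satisfying $\gamma_K.(\pi_\D.\Lambda\alpha) = \gamma_H$.

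It then remains to check that $\Lambda$ is essentially surjective on objects and $2$-fully faithful, hence a $2$-equivalence. For essential surjectivity, given a locally \'etale $F \colon \C \to \K$ one builds a $\mathbb S\cat{h}(\K)$-category with the objects of $\C$, with hom-sheaf from $x$ to $y$ the sheaf on $\K(Fx,Fy)$ corresponding to the \'etale functor $F_{x,y} \colon \C(x,y) \to \K(Fx,Fy)$, and with composition and units read off from those of $\C$; applying $\Lambda$ returns an object isomorphic to $F$ in $\cat{2}\text-\cat{Cat}\mathbin{\sslash_{\textrm{\upshape l\'et}}} \K$. For $2$-fully faithfulness, the passages of the previous paragraph are invertible once the lemma on cells is read in the reverse direction. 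Everything here is routine; the one place that genuinely uses something beyond the presheaf case is the sheaf-preservation property of left Kan extension along an \'etale functor that powers the analogue of Lemma~\ref{lem:otherdesc} --- but this, together with the compatibility of ``category of elements'' with sheafified Day convolution, is already in force in the definitions of $\cat{Sh}(\K)$ and $\mathbb S\cat{h}(\K)$, so in the end the proof is indeed, as claimed, identical in form to that of Proposition~\ref{prop:genwood}.
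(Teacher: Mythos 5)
Your overall strategy is exactly the paper's: the paper proves this proposition simply by declaring the argument ``identical in form'' to that of Proposition~\ref{prop:genwood}, and your proposal is a faithful expansion of that plan, with the object-level constructions and the passage to categories of elements going through verbatim. But the one place where you go beyond the presheaf argument is also the one place where your justification does not hold up: the sheaf analogue of Lemma~\ref{lem:otherdesc}. You reduce it to the claim that left Kan extension along an \'etale functor preserves sheaves, so that $\ell\Lan_{\K(B,g)}(V)=\Lan_{\K(B,g)}(V)$, and you assert this is ``a fact of the same kind as those underlying Proposition~\ref{prop:2site}'' and ``already in force'' in the definitions. It is not: the proof of Proposition~\ref{prop:2site} shows that $\Lan_{\K(f,C)}$ preserves covering sieves (hence the class $\Sigma$), which is a different statement and does not yield sheaf-preservation; and the paper's definition of an \'etale functor $p\colon\E\to\B$ constrains only the codomain (a discrete fibration whose presheaf is a $j$-sheaf on $\B$), saying nothing about how a topology on $\E$ interacts with that on $\B$.

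Concretely, $\Lan_{\K(B,g)}(V)$ is the presheaf of the composite discrete fibration $\mathrm{el}\,V\to\K(B,D)\to\K(B,C)$; to check its sheaf condition for a cover $(c_i\to c)$ one first patches the underlying objects $d_i$ using that the presheaf of $\K(B,g)$ is a sheaf, obtaining a lifted family $(d_i\to d)$, and must then patch the elements $v_i\in V(d_i)$ --- for which one needs the lifted family to be \emph{covering} in $\K(B,D)$ (equivalently, that a sheaf for the given topology on $\K(B,D)$ is a sheaf for the topology induced along the discrete fibration $\K(B,g)$). This cover-lifting property is not a formal consequence of the $2$-fc-site axioms together with \'etaleness of $g$: those give preservation of covers in the opposite direction only. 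Without it, a cell of $\mathbb S\cat{h}(\K)$ is merely a map into the sheafification $\ell\Lan_{\K(B,g)}(V)$, which need not correspond to a functor $\mathrm{el}\,U\to\mathrm{el}\,V$ over the square, and the whole translation of enriched functors and transformations breaks down. In the case that carries the weight of the paper, $\K=\Gamma(\vstab^\op)$ with $g$ total, the property does hold: the lift of $k_i\leqslant k=gd$ is $d\overline{k_i}$, and $\bigvee_i d\overline{k_i}=d\,\overline{\bigvee_i k_i}=d\,\overline{gd}=d$ by (J1)--(J3) and totality of $g$. So you should either prove (or impose as a hypothesis) this lifting property for the $2$-fc-sites under consideration; as written, the step ``hence $\ell\Lan_{\K(B,g)}(V)=\Lan_{\K(B,g)}(V)$'' is the genuine gap in the proposal.
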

Thus, defining $\cat j\mathbb R \defeq \mathbb S\cat h(\Gamma(\vstab^\op))$, we immediately conclude from  Proposition~\ref{prop:joingenwood} and Theorem~\ref{thm:joinequiv2} that:
\begin{Thm}\label{thm:joinequiv3}
The $2$-category of join restriction categories is $2$-equivalent to the $2$-category of $\cat j\mathbb R$-enriched categories.
\end{Thm}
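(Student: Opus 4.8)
The plan is to deduce the equivalence purely formally from two results already at our disposal: Proposition~\ref{prop:joingenwood}, instantiated at a suitable $2$-fc-site, and Theorem~\ref{thm:joinequiv2}. No new construction is required; the argument amounts to composing $2$-equivalences and matching up notation. First I would check that $\Gamma(\vstab^\op)$ is a legitimate argument for Proposition~\ref{prop:joingenwood}, i.e.\ a locally small $2$-fc-site. Since $\vstab^\op$ is a join restriction category (being a hyperconnected join restriction subcategory of $\cat{Stab}^\op$), the earlier result that $\Gamma\C$ is a $2$-fc-site for every join restriction category $\C$ applies to give that $\Gamma(\vstab^\op)$ is a $2$-fc-site; and it is locally small, since the homsets of $\vstab^\op$ are small and the hom-categories of $\Gamma(\vstab^\op)$ are merely posets on these homsets. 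Applying Proposition~\ref{prop:joingenwood} and unwinding the definition $\cat j\mathbb R \defeq \mathbb S\cat h(\Gamma(\vstab^\op))$, I obtain a $2$-equivalence
\[
\cat j\mathbb R\text-\cat{Cat} \;\simeq\; \cat{2}\text-\cat{Cat}\mathbin{\sslash_{\textrm{l\'et}}} \Gamma(\vstab^\op)\rlap{ .}
\]

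Next I would observe that the right-hand side is precisely the $2$-category $\cat{jrCat}''$ of Theorem~\ref{thm:joinequiv2}: the objects of both are the locally \'etale $2$-functors with codomain $\Gamma(\vstab^\op)$, the $1$-cells of both are the lax triangles over $\Gamma(\vstab^\op)$ whose mediating $2$-natural transformation has \'etale components, and the $2$-cells of both are the $2$-natural transformations making the evident triangle commute. The only discrepancy is the local-smallness hypothesis imposed in Proposition~\ref{prop:joingenwood} but not written into $\cat{jrCat}''$; this is harmless, since every object of $\cat{jrCat}''$ is $2$-equivalent to one of the form $\Gamma\C$ with $\C$ a (locally small) join restriction category, so the two $2$-categories coincide. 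Composing the displayed $2$-equivalence with the $2$-equivalence $\cat{jrCat}\simeq\cat{jrCat}''$ supplied by Theorem~\ref{thm:joinequiv2} then gives the desired $2$-equivalence between $\cat{jrCat}$ and $\cat j\mathbb R\text-\cat{Cat}$.

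Since both ingredients are already in hand, there is no genuine obstacle beyond this bookkeeping: the substantive work lies upstream, in Proposition~\ref{prop:joingenwood} (the double-categorical, sheaf-theoretic analogue of Wood's theorem, resting on Propositions~\ref{prop:2site} and~\ref{prop:ettotal}) and in Theorem~\ref{thm:joinequiv2}. If anything, the one point worth flagging explicitly in the write-up is the size condition just discussed, which I would dispatch with the observation above.
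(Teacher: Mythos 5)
Your proposal is correct and follows essentially the same route as the paper, which deduces Theorem~\ref{thm:joinequiv3} immediately from Proposition~\ref{prop:joingenwood} (applied to the $2$-fc-site $\Gamma(\vstab^\op)$) together with Theorem~\ref{thm:joinequiv2}; your extra remarks verifying that $\Gamma(\vstab^\op)$ is a locally small $2$-fc-site and dispatching the size condition are just the bookkeeping the paper leaves implicit.
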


\section{Range restriction categories}\label{sec:range}
In this final section, we turn our attention from join restriction categories to \emph{range restriction categories}~\cite{Cockett2012Range}. A range restriction category is a restriction category $\C$ equipped with a \emph{range} operator assigning to each map $f \colon A \to B$ a map $\hat f \colon B \to B$, subject to the following four axioms:
\begin{enumerate}[(RR1)]
\item $\overline{\hat f} = \hat f$ for all $f \colon A \to B$;
\item $\hat f f = f$ for all $f \colon A \to B$;
\item $\widehat{\bar g f} = \bar g \hat f$ for all $f \colon A \to B$ and $g \colon B \to C$;
\item $\widehat{ g \hat f} = \widehat{gf}$ for all $f \colon A \to B$ and $g \colon B \to C$.
\end{enumerate}
A \emph{range restriction functor} $F \colon \C \to \D$ is a restriction functor which also satisfies $F(\hat f) = \widehat{Ff}$ for each arrow $f$ of $\C$. We write $\cat{rrCat}$ for the $2$-category of range restriction categories, range restriction functors and total natural transformations.

By (RR1), each map $\hat f$ in a range restriction category is a restriction idempotent, whose intended interpretation is as as the image of the map $f$. For instance, the category $\cat{Set}_p$ of sets and partial functions is a range restriction category, when equipped with the range operator which to a partial function $f \colon A \rightharpoonup B$ assigns the partial function $\hat f \colon B \rightharpoonup B$ with
\begin{equation*}
\hat f(b) = \begin{cases} b & \text{if $f(a) = b$ for some $a \in A$;} \\ \text{undefined} & \text{otherwise.}\end{cases}
\end{equation*}

Although we have presented it as extra structure, having a range operator is in fact a property of a restriction category:
\begin{Lemma}(cf. \cite[\S 2.11]{Cockett2012Range})
A restriction category bears at most one range operator.
\end{Lemma}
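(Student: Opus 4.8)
The plan is to show that, in any range restriction category, the value $\hat f$ is forced by a property phrased entirely in terms of the underlying restriction structure: namely, that $\hat f$ is the least restriction idempotent $e$ on the codomain of $f$ for which $ef = f$. Such a least element is of course unique if it exists, and the characterisation makes no mention of the range operator itself, so uniqueness of the range operator will follow immediately.

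Concretely, fix $f \colon A \to B$. First I would observe that $\hat f \in \O(B)$ by (RR1) and that $\hat f f = f$ by (RR2); thus $\hat f$ lies in the set $T = \{e \in \O(B) : ef = f\}$, which is therefore nonempty (it also contains $1_B$). Next I would check that $\hat f$ is a lower bound for $T$. Given any $e \in T$, we have $\bar e = e$ since $e$ is a restriction idempotent, so (RR3) applied with $g \defeq e \colon B \to B$ yields $\widehat{ef} = \widehat{\bar e f} = \bar e\,\hat f = e\,\hat f$; but $ef = f$, so $\widehat{ef} = \hat f$, and hence $\hat f = e\,\hat f$. Since $\overline{\hat f} = \hat f$ by (RR1), this last equation is exactly the statement $\hat f \leqslant e$ in the restriction partial order on $\C(B,B)$. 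Therefore $\hat f$ is the least element of $T$.

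Consequently, suppose $f \mapsto \hat f$ and $f \mapsto \check f$ were two range operators on a single restriction category $\C$. The argument above applies verbatim to each, showing that for every $f \colon A \to B$ both $\hat f$ and $\check f$ are the least element of the set $\{e \in \O(B) : ef = f\}$---a set defined using only the restriction structure of $\C$. Hence $\hat f = \check f$ for all $f$, as required. Note that only (RR1)--(RR3) enter this argument; (RR4) plays no part in establishing uniqueness.

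There is no genuine obstacle here: once the order-theoretic description of the range is identified, each step is a one-line invocation of a single axiom. The only point worth emphasising is the trick of instantiating $g$ in (RR3) as the idempotent $e$ itself, which is what converts the hypothesis $ef = f$ into the inequality $\hat f \leqslant e$.
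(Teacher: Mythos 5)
Your proof is correct and is essentially the paper's argument in different packaging: the paper's one-line chain $\hat f = \widehat{\tilde f f} = \tilde f\,\hat f = \hat f\,\tilde f = \widetilde{\hat f f} = \tilde f$ rests on exactly your key step---instantiating (RR3) at a restriction idempotent and invoking (RR2)---applied symmetrically to the two operators. Your intrinsic characterisation of $\hat f$ as the least $e \in \O(B)$ with $ef = f$ is a nice by-product, but the uniqueness it yields uses the same axioms in the same way (antisymmetry of the restriction order being (R2) in disguise).
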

\begin{proof}
Suppose that $(\hat{\ })$ and $(\tilde{\ })$ are two such operators. Then
\[\hat f = \widehat{\tilde f f} = \tilde f \hat f = \hat f \tilde f = \widetilde{\hat f f} = \tilde f\ \text.\qedhere\]
\end{proof}

The crucial fact for our purposes is that the property of having a range operator can be expressed purely in terms of the fundamental functor. Given a poset $B$ and $b \in B$, we write $B / b$ for the downset $\{x \in B \mid x \leqslant b\}$. We define a stable map of meet-semilattices $g \colon A \to B$  to be \emph{open}~\cite[Definition 2.5]{Cockett2012Range} if, when viewed as a map $A \to B / g(\top)$, it possesses a left adjoint $f \colon B / g(\top) \to A$---which we call a \emph{local left adjoint} for $g$---and this satisfies \emph{Frobenius reciprocity}:
\begin{equation*}
f(g(a) \wedge b) = a \wedge f(b) \qquad \text{for all $a \in A$ and $b \leqslant g(\top) \in B$.}
\end{equation*}
It is now easy to check that identity maps are open, composites of open maps are open, and that 
if $f \leqslant g$ and $g$ is open, then so is $f$.
It follows that $\cat{oStab}^\op$, the opposite of the category of meet semilattices and stable open maps, is a hyperconnected sub-restriction category of $\cat{Stab}^\op$.%
%

\begin{Prop}(cf. \cite[Proposition 2.13]{Cockett2012Range})
A restriction category $\C$ admits a range operator just when its fundamental functor $\O \colon \C \to \cat{Stab}^\op$ factors through $\cat{oStab}^\op$.
\end{Prop}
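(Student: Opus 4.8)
The plan is to prove the two implications separately, in both directions routing everything through the local left adjoints supplied by openness. A common preliminary: for $f \colon A \to B$ we have $\om f(\top) = \overline{1_B f} = \overline f$, and since $\overline{df} \leqslant \overline f$ for every $d \in \O(B)$, the stable map $\om f \colon \O(B) \to \O(A)$ corestricts to a map $\om f \colon \O(B) \to \O(A)/\overline f$; thus openness of $\om f$ means precisely that this corestriction has a left adjoint $L_f \colon \O(A)/\overline f \to \O(B)$ satisfying Frobenius reciprocity.

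For the ``only if'' direction, suppose $\C$ carries a range operator. Given $f \colon A \to B$, set $L_f(e) \defeq \widehat{fe}$ for $e \leqslant \overline f$; this lies in $\O(B)$ by (RR1). To see that $L_f$ is left adjoint to $\om f$ one checks $\widehat{fe} \leqslant d \iff e \leqslant \overline{df}$ in each direction: from $\widehat{fe} \leqslant d$ one obtains $d \cdot fe = fe$ using (RR2), then rewrites $df = f\,\overline{df}$ (an instance of (R4)) and applies the restriction operator to deduce $e \leqslant \overline{df}$; conversely $e \leqslant \overline{df}$ gives $d \cdot fe = fe$, whence $\widehat{fe} = \widehat{d \cdot fe} = d\,\widehat{fe}$ by (RR3). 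The same manipulation delivers Frobenius reciprocity directly: $L_f(\om f(d) \wedge e) = \widehat{f(\overline{df}\,e)} = \widehat{d \cdot fe} = d\,\widehat{fe} = d \wedge L_f(e)$. Hence every $\om f$ is open; and since $\cat{oStab}^\op$ has the same objects and the same restriction structure as $\cat{Stab}^\op$, the fundamental functor factors through it as a restriction functor.

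For the ``if'' direction, suppose $\O$ factors through $\cat{oStab}^\op$, so each $\om f$ is open; write $L_f \colon \O(A)/\overline f \to \O(B)$ for its local left adjoint, which is unique since adjoints are. Define the candidate range operator by $\hat f \defeq L_f(\overline f)$, the image under $L_f$ of $\overline f$, the top of $\O(A)/\overline f$. Two bookkeeping facts about these local left adjoints will be used, both flowing from the closure properties noted just before the Proposition (identities open, composites open, maps below an open map open): (a) compatibility with composition, in the form $L_{gf} = L_g \circ L_f$ on $\O(A)/\overline{gf}$; and (b) compatibility with the order, which together with uniqueness of adjoints yields the key identity $\widehat{fe} = L_f(e)$ for all $e \leqslant \overline f$, and also $L_f(\om f(d)) = d \wedge \hat f$ from Frobenius. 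Granting these, axiom (RR1) is immediate as $\hat f \in \O(B)$; for (RR2), the unit of $L_f \dashv \om f$ gives $\overline f \leqslant \om f(\hat f) = \overline{\hat f f}$, forcing $\overline{\hat f f} = \overline f$ and hence $\hat f f = f$; for (RR3) one rewrites $\bar g f = f\,\overline{gf}$, computes $\widehat{\bar g f} = L_f(\overline{gf})$ using (b), and then applies Frobenius for $L_f$ with $d = \bar g$, $e = \overline f$ to identify $L_f(\overline{gf}) = \bar g \wedge L_f(\overline f) = \bar g\hat f$; and for (RR4) one rewrites $gf = (g\hat f)f$ using (RR2), so $\widehat{gf} = \widehat{(g\hat f)f}$, and then combines (a) and (b) to show $\widehat{hf} = \hat h$ whenever $\overline h \leqslant \hat f$ --- which is the case at hand, with $h = g\hat f$ and $\overline h = \bar g\hat f \leqslant \hat f$.

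The substantial part of the argument is the ``if'' direction, and within it axiom (RR4): this is the only point where the interaction of local left adjoints with composition is genuinely needed, and establishing the identity $\widehat{fe} = L_f(e)$ (rather than just $\hat f = L_f(\overline f)$) calls for a short argument with uniqueness of adjoints rather than a one-line computation. The remaining verifications reduce to the basic restriction identities of \cite[Lemma 2.1]{Cockett2002Restriction} together with the adjunction and Frobenius laws.
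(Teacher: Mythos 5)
Your proposal is correct and follows essentially the same route as the paper: in the forward direction you define the local left adjoint by $f_!(e) = \widehat{fe}$ and verify adjointness and Frobenius from the restriction/range axioms, and in the converse you set $\hat f = f_!(\bar f)$ and derive (RR1)--(RR4) from the adjunction, Frobenius reciprocity, and the compatibility of local left adjoints with composition and with maps below a given one (your facts (a) and (b) are exactly the "restriction of $f_!$" observations the paper uses for (RR3) and (RR4)). The only cosmetic difference is your reorganisation of (RR4) via the auxiliary identity $\widehat{hf} = \hat h$ for $\bar h \leqslant \hat f$, which is equivalent to the paper's direct computation.
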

\begin{proof}
First suppose that $\C$ is a range restriction category. Then for each $f \colon A \to B$ in $\C$, we define a local left adjoint $f_!$ to $f^\ast$ by $f_!(a) = \widehat{fa}$. To check adjointness, let $a \leqslant \overline f \in \O(A)$ and $b \in \O(B)$; we must show that  $a  \leqslant f^\ast(b)$ iff $f_!(a) \leqslant b$, i.e., that $\overline{bf} e = e$ iff $b\widehat{fa} = \widehat{fa}$. But if $\overline{bf} a = a$, then
\[
b\widehat{fa} 
=
\widehat{bfa} 
=
\widehat{f\overline{bf}a} = \widehat{fa}\rlap{ ;}
\]
and conversely, if $b\widehat{fa} = \widehat{fa}$, then since $a \leqslant \overline f$, we have
\[
a = \overline{f}a 
=
\overline{fa} 
=
\overline{\widehat{fa} fa} = \overline{b\widehat{fa} fa} 
=
\overline{bfa} 
=
\overline{bf}a\rlap{ .}\]
To check  Frobenius reciprocity, we calculate that, for $a$ and $b$ as above, we have
\[
f_!(f^\ast(b) \wedge a) = \widehat{f\overline{bf}a} 
=
 \widehat{bfa}
=
   b\widehat{fa} = b \wedge f_!(a)\rlap{ ,}
\]
as required. This proves that if $\C$ admits a range operator, then its fundamental functor factors through $\cat{oStab}^\op$. 

Suppose conversely that the fundamental functor of $\C$ factors through $\cat{oStab}^\op$. We define a range operator by $\hat f = f_!(\bar f)$. Clearly this satisfies (RR1). 
For (RR2), note first that $\overline{f_!(\bar f) f} = f^\ast(f_!(f^\ast(\top))) = f^\ast(\top) = \bar f$; whence $\hat f f = f_!(\overline f) f = f 
\overline{f_!(\bar f) f} = f \bar f = f$ as required. For (RR3), let $f \colon A \to B$ and $g \colon B \to C$. It is easy to show that $\bar g_! \colon \O(B) / \bar g \to \O(B)$ is given by inclusion, and it follows that $(\bar g f)_!$ is just the restriction of $f_!$ to $\O(A)/(\bar g \wedge \bar f)$. So now
\begin{equation*}
\widehat{\overline g f} = (\bar g f)_!(\overline{\bar g f}) = f_!(\overline{\bar g f}) = f_!(f^\ast(\bar g)) = f_!(f^\ast(\bar g) \wedge \bar f) = \bar g \wedge f_!(\bar f) = \bar g \hat f
\end{equation*}
as required, where the penultimate equality uses Frobenius reciprocity. Finally, for (RR4), let  $f \colon A \to B$ and $g \colon B \to C$; it is again easy to show that $(g\hat f)_!$ is the restriction of $g_!$ to $\O(B) / (\hat f \wedge \bar g)$, and thus
\begin{equation*} 
\widehat{g \hat f} = (g\hat f)_!(\overline{g \hat f}) = g_!(\bar g \hat f) = g_!(\bar g \wedge f_!(\bar f)) = g_!(f_!(f^\ast(\bar g) \wedge \bar f)) = g_!(f_!(\overline{gf})) = \widehat{gf}
\end{equation*}
as required, where the fourth equality again uses Frobenius reciprocity.
\end{proof}
We now relate range restriction functors to the fundamental functor. A commutative square in $\cat{Stab}$ as on the left in
\begin{equation*}
\cd{
A \ar[d]_f \ar[r]^h & B \ar[d]^g \\
C \ar[r]_k & D
} \qquad \qquad 
\cd{
A \ar[d]_f \ar@{<-}[r]^-{\ h_!} & B/h(\top) \ar[d]^{g/h(\top)} \\
C \ar@{<-}[r]_-{\ k_!} & D/k(\top)
}
\end{equation*}
is called \emph{Beck-Chevalley} if $f$ and $g$ are total, $h$ and $k$ are open---with local left adjoints $h_!$ and $k_!$, say---and the square on the right above is also commutative in $\cat{Stab}$.
If $F, G \colon \C \to \cat{Stab}^\op$ are restriction functors taking values in open maps, then we call a total natural transformation $\gamma \colon F \Rightarrow G$ \emph{Beck-Chevalley} if all of its naturality squares, seen as appropriately oriented squares in $\cat{Stab}$, are Beck-Chevalley.
\begin{Prop}
A restriction functor $F \colon \C \to \D$ between range restriction categories preserves the range operator just when the associated total natural transformation $\varphi$ of~\eqref{eq:gammatot} is Beck-Chevalley.
\end{Prop}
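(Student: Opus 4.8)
The plan is to unwind both notions and observe that the Beck-Chevalley condition on $\varphi$ is precisely a reindexed form of the defining equation $F(\hat g) = \widehat{Fg}$ of a range restriction functor. Write $\o_\C \colon \C \to \cat{Stab}^\op$ and $\o_\D \colon \D \to \cat{Stab}^\op$ for the two fundamental functors, so that $\varphi$ of~\eqref{eq:gammatot} is a total natural transformation $\o_\D F \Rightarrow \o_\C$ whose component at $A$ is the stable map $\o_\C(A) \to \o_\D(FA)$, $e \mapsto Fe$. Since $\C$ and $\D$ are range restriction categories, applying the preceding proposition to $\C$ and to $\D$ shows that both $\o_\C$ and the composite $\o_\D F$ (whose value at $u$ is $\o_\D(Fu)$) take values in open stable maps, so it is meaningful to ask whether $\varphi$ is Beck-Chevalley; and since $\varphi$ is total, the two boundary conditions in the definition of a Beck-Chevalley square---that the vertical legs be total and the horizontal legs open---hold automatically for every naturality square of $\varphi$. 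Thus ``$\varphi$ is Beck-Chevalley'' reduces to the commutativity of the square obtained, for each $u$, by passing to local left adjoints.

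The main step is to make that latter square explicit. Fix $u \colon A' \to A$ in $\C$, and orient the naturality square of $\varphi$ at $u$ in $\cat{Stab}$ so that $\varphi_A$ and $\varphi_{A'}$ are its (total) vertical legs and $u^\ast \colon \o_\C(A) \to \o_\C(A')$, $(Fu)^\ast \colon \o_\D(FA) \to \o_\D(FA')$ its (open) horizontal legs. Here $u^\ast(\top) = \overline u$ and $(Fu)^\ast(\top) = \overline{Fu}$, while the proof of the preceding proposition identifies the local left adjoints as $u_! \colon \o_\C(A')/\overline u \to \o_\C(A)$, $a \mapsto \widehat{ua}$, and $(Fu)_! \colon \o_\D(FA')/\overline{Fu} \to \o_\D(FA)$, $b \mapsto \widehat{(Fu)b}$. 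Chasing an element $a \in \o_\C(A')$ with $a \leqslant \overline u$ around the left-adjoint square, one composite gives $\varphi_A(u_!(a)) = F(\widehat{ua})$ and the other gives $(Fu)_!(\varphi_{A'}(a)) = \widehat{(Fu)(Fa)} = \widehat{F(ua)}$. Hence $\varphi$ is Beck-Chevalley if and only if $F(\widehat{ua}) = \widehat{F(ua)}$ for every $u \colon A' \to A$ in $\C$ and every $a \in \o_\C(A')$ with $a \leqslant \overline u$.

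Both implications are now immediate. If $F$ is a range restriction functor, then the equation $F(\hat g) = \widehat{Fg}$, applied to $g = ua$, shows the condition just displayed holds, so $\varphi$ is Beck-Chevalley. Conversely, if $\varphi$ is Beck-Chevalley, then for an arbitrary $f \colon A \to B$ in $\C$ we may apply that condition with $u = f$ and $a = \overline f$ (legitimate, since $\overline f \leqslant \overline f$), obtaining $F(\widehat{f \overline f}) = \widehat{F(f \overline f)}$; and since $f \overline f = f$ by axiom (R1), this says precisely $F(\hat f) = \widehat{Ff}$. Hence $F$ preserves the range operator.

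The only real work is the middle paragraph: getting the orientation of the naturality square right so that the components of $\varphi$ play the role of the total legs, and importing the formula $u_!(a) = \widehat{ua}$ for the local left adjoint from the proof of the preceding proposition. Once this dictionary is in place, neither implication costs more than a line.
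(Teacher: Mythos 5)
Your argument is correct and is essentially the paper's own proof: both unwind the Beck--Chevalley condition on $\varphi$ to the equation $F(\widehat{ua}) = \widehat{F(ua)}$ for all $u$ and all $a \leqslant \overline{u}$, using the formula $u_!(a) = \widehat{ua}$ from the preceding proposition. Your only addition is to spell out the equivalence the paper calls ``easy,'' via $g = ua$ in one direction and $a = \overline{f}$ with (R1) in the other, which is exactly the intended check.
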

\begin{proof}
The Beck-Chevalley condition says that, for all $f \colon A \to B$ in $\C$, we have commutativity on the left in:
\begin{equation*}
\cd[@C+1em]{
\o (A) / \bar f \ar[r]^-{\varphi_A / 
\bar f} \ar[d]_{f_!} & \o (FA) / \overline{Ff}  \ar[d]^{(Ff)_!} \\
\o (B) \ar[r]_-{\varphi_B} & \o (FB)
}\qquad\qquad
\cd{
a \ar@{|->}[r] \ar@{|->}[d] & Fa \ar@{|->}[d] \\
\widehat{fa} \ar@{|->}[r] & F(\widehat{fa}) = \widehat{Ff.Fa}
}
\end{equation*}
Evaluating at $a \leqslant \bar f \in \O(A)$ as on the right, this says that $\widehat{F(fa)} = F(\widehat{fa})$ for every $f \colon A \to B$ and $a \leqslant \bar f \in \O(A)$ in $\C$; which is easily equivalent to $F$'s preserving the range operator.
\end{proof}
From the preceding two results and Theorem~\ref{thm:equiv1}, we thereby conclude that:
\begin{Thm}\label{thm:rangeequiv1}
The $2$-category $\cat{rrCat}$ of range restriction categories is $2$-equivalent to the $2$-category $\cat{rrCat}'$ whose objects are hyperconnected restriction functors $F \colon \C \to \cat{Stab}^\op$ which factor through $\cat{oStab}^\op$, whose morphisms are diagrams \begin{equation*}
\cd[@!C]{
\C \ar[rr]^H \ar[dr]_F & \ltwocell[0.4]{d}{\gamma} & \D \ar[dl]^{G} \\
& \cat{Stab}^\op
}
\end{equation*}
in $\cat{rCat}$ with $\gamma$ Beck-Chevalley, and whose $2$-cells $(H, \gamma) \to (H', \gamma')$ are total natural transformations $\theta \colon H \to H'$ with $\gamma'.G\theta = \gamma$.
\end{Thm}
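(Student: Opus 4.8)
The plan is to derive the theorem by restricting the $2$-equivalence $U \colon \cat{rCat}' \to \cat{rCat}$ of Theorem~\ref{thm:equiv1} along the inclusion $\cat{rrCat}' \hookrightarrow \cat{rCat}'$ and the forgetful $2$-functor $\cat{rrCat} \to \cat{rCat}$ (which is injective on objects, by uniqueness of range operators, and fully faithful on $2$-cells). Recall that $U$ is surjective on objects and $2$-fully faithful; in particular, for any objects $F \colon \C \to \cat{Stab}^\op$ and $G \colon \D \to \cat{Stab}^\op$ of $\cat{rCat}'$, each restriction functor $H \colon \C \to \D$ underlies a unique morphism $(H, \gamma)$ of $\cat{rCat}'$. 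The observation that makes the restriction go through is this: since $F$ and $G$ are hyperconnected, they are canonically isomorphic to the fundamental functors of $\C$ and $\D$, and under these isomorphisms the $2$-cell $\gamma$ of $(H,\gamma)$ becomes the canonical transformation $\varphi$ of~\eqref{eq:gammatot} attached to $H$---indeed, $\gamma$ and $\varphi$ agree on the nose when $F$ and $G$ are the fundamental functors, and differ only by vertical composition with invertible total transformations in general.

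First I would match up the objects. An object of $\cat{rrCat}'$ is a hyperconnected $F \colon \C \to \cat{Stab}^\op$ factoring through $\cat{oStab}^\op$; since $F$ is isomorphic to the fundamental functor of $\C$, and the class of stable open maps is closed under isomorphism, this amounts to asking that the fundamental functor of $\C$ factor through $\cat{oStab}^\op$, which by the first of the two propositions above holds exactly when $\C$ carries a range operator. Conversely, the fundamental functor of any range restriction category is such an $F$. So $U$ carries the objects of $\cat{rrCat}'$ surjectively onto those of $\cat{rrCat}$.

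Next I would match up the hom-categories. Both $\cat{rrCat}'(X,Y)$ and $\cat{rrCat}(UX,UY)$ are full subcategories of $\cat{rCat}'(X,Y)$ and $\cat{rCat}(UX,UY)$ respectively---each cut out by a condition on $1$-cells alone, with all $2$-cells allowed---and $U$ already restricts to an isomorphism $\cat{rCat}'(X,Y) \cong \cat{rCat}(UX,UY)$. So it is enough to see that this isomorphism matches the two classes of $1$-cells: that a morphism $(H,\gamma)$ of $\cat{rCat}'$ has $\gamma$ Beck-Chevalley precisely when $H$ is a range restriction functor. Using the identification of $\gamma$ with $\varphi$ up to vertical composition with invertible total transformations, together with the facts that isomorphisms of stable maps give Beck-Chevalley squares and that Beck-Chevalley squares are closed under pasting (both instances of the calculus of mates), $\gamma$ is Beck-Chevalley if and only if $\varphi$ is; and the second of the two propositions above says $\varphi$ is Beck-Chevalley if and only if $H$ preserves the range operator. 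Hence $U$ restricts to a $2$-fully faithful $2$-functor $\cat{rrCat}' \to \cat{rrCat}$, which, being surjective on objects, is a $2$-equivalence.

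The one step I expect to require genuine care is the identification of $\gamma$ and $\varphi$: pinning down how the $2$-cell of a lax triangle in $\cat{rCat}'$ compares with the transformation $\varphi$ of~\eqref{eq:gammatot} once the base functors are replaced by honest fundamental functors, and confirming that the Beck-Chevalley property survives the intervening invertible total transformations. Everything else is a routine restriction of Theorem~\ref{thm:equiv1} by means of the two propositions above, broadly parallel to the way Theorem~\ref{thm:joinequiv1} followed from Theorem~\ref{thm:equiv1} in the join case.
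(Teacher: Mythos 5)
Your proposal is correct and follows essentially the same route as the paper, which likewise obtains Theorem~\ref{thm:rangeequiv1} by restricting the $2$-equivalence of Theorem~\ref{thm:equiv1} along the two preceding propositions (range operator $\Leftrightarrow$ fundamental functor factors through $\cat{oStab}^\op$; range preservation $\Leftrightarrow$ $\varphi$ Beck--Chevalley). The only difference is that you spell out the transport of these conditions from the fundamental functors to arbitrary hyperconnected $F$, $G$ via the invertible total transformations and the stability of openness and of the Beck--Chevalley property under such pasting, details the paper leaves implicit.
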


Since range restriction categories are special kinds of restriction categories, they are equally well special kinds of $\mathbb R$-enriched category, where we recall that $\mathbb R = \mathbb P(\Gamma \cat{Stab}^\op)$. What we will now show is that range restriction categories are in fact $\cat r \mathbb R$-enriched categories for a suitable sub-double category $\cat r \mathbb R \subset \mathbb R$, with this equivalence extending to the functors and natural transformations between them.

The objects and arrows of $\cat r \mathbb R$ will be the same as $\mathbb R$. A horizontal arrow $A \tor B$ of $\cat r \mathbb{R}$ will be such an arrow in $\mathbb R$---thus a presheaf on $\Gamma(\cat{Stab}^\op)(A,B)$---whose associated discrete fibration
$\mathrm{el}\ U \to \Gamma(\cat{Stab}^\op)(A,B)$ factors through $\Gamma(\cat{oStab}^\op)(A,B)$. 
A cell of $\cat r \mathbb R$ may be described as follows. By Lemma~\ref{lem:otherdesc}, a cell of $\mathbb R$ of the form~\eqref{eq:typcell} can be identified with a functor $\alpha$ fitting into a commutative diagram
\begin{equation*}
\cd[@C+1em]{
\mathrm{el}\ U \ar[d]_{\alpha} \ar[rr]^-{\pi_U} && \Gamma(\cat{Stab}^\op)(A,C) \ar[d]^{(\thg) \circ f} \\
\mathrm{el}\ V \ar[r]_-{\pi_V}  & \Gamma(\cat{Stab}^\op)(B,D) \ar[r]_-{g \circ (\thg)}
& \Gamma(\cat{Stab}^\op)(B,C)\rlap{ .}
}
\end{equation*}
Such a cell will lie in $\cat r \mathbb R$ if, firstly, $U$ and $V$ lie in $\cat r \mathbb R$, so that $\pi_U$ and $\pi_V$ both take their image in open maps; and secondly, whenever given $(k,u) \in \mathrm{el}\ U$ with $\alpha(k,u) = (k',u')$, say, the square
\begin{equation*}
\cd[@C+1em]{
C \ar[d]_{g} \ar[r]^{k} & A \ar[d]^{f} \\
D \ar[r]_{k'} & B
}
\end{equation*}
in $\cat{Stab}$ is Beck-Chevalley.

\begin{Prop}
$\cat r \mathbb R$ is a sub-double category of $\mathbb R$.
\end{Prop}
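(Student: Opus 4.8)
The plan is to verify that the classes of objects, vertical arrows, horizontal arrows and cells declared above are closed under all the structural operations of $\mathbb R$ --- horizontal identities, horizontal composition of horizontal arrows, identity cells, and vertical and horizontal composition of cells --- after which the weak double category axioms for $\cat r\mathbb R$ follow immediately from those for $\mathbb R$. Since $\cat r\mathbb R$ has the same objects and vertical arrows as $\mathbb R$, and since the associativity and unit constraints of $\mathbb R$ are globular, hence have identity vertical boundaries --- so that the $\cat{Stab}$-squares they attach to their elements have identity vertical legs and are trivially Beck--Chevalley --- the only matters requiring attention are the closure of the horizontal arrows and the preservation of the Beck--Chevalley condition by the operations on cells.

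For the horizontal arrows: the horizontal identity at $A$ is the representable $\Y(1_A)$, whose elements all lie over maps $k \leqslant 1_A$; as $1_A$ is open and open maps are closed downwards (both recorded above), each such $k$ is open, so $\Y(1_A)$ lies in $\cat r\mathbb R$. For composites, I would recall --- as used in the proof of Proposition~\ref{prop:2site} --- that every horizontal arrow of $\mathbb R$ is a colimit of representables indexed by its category of elements, and that composition in $\mathbb R$ is cocontinuous in each variable with $\Y l \cdot \Y k = \Y(l k)$ on representables. Hence, for $U \colon A \tor B$ and $V \colon B \tor C$ in $\cat r\mathbb R$, every element of $V \otimes U$ lies over some $h$ with $h \leqslant l k$ for $(k,u) \in \mathrm{el}\,U$ and $(l,v) \in \mathrm{el}\,V$; as $k$ and $l$ are open, so is $l k$, hence so is $h$, and $V \otimes U$ lies in $\cat r\mathbb R$.

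For the cells: the identity cell on a horizontal arrow has identity vertical boundaries, and that on a vertical arrow has identity horizontal boundaries with identity local left adjoints, so in both cases the attached squares are trivially Beck--Chevalley. For a composite of two cells of $\cat r\mathbb R$, the square attached to an element of its source is the vertical (resp.\ horizontal) pasting of the two Beck--Chevalley squares attached to the factors, so it is enough to show that such a pasting is again Beck--Chevalley; note also that vertical composition introduces no new horizontal boundaries, while horizontal composition introduces the composites handled in the previous paragraph. For a vertical pasting of $g h = k f$ on top of $g' k = m f'$, the outer square $g' g h = m f' f$ is Beck--Chevalley since $f' f\, h_! = f'\, k_!\,(g/h(\top)) = m_!\,(g'/k(\top))\,(g/h(\top)) = m_!\,((g' g)/h(\top))$; the horizontal pasting is formally dual, the one extra ingredient being that for composable open maps $h, h'$ the composite $h' h$ is open with local left adjoint the evident restriction of $h_! \circ h'_!$. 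This yields closure under all the operations of $\mathbb R$. The only step that is not a mechanical check is this last stability of the Beck--Chevalley condition under horizontal pasting; even there, the facts needed about composing open maps and local left adjoints are slight elaborations of properties of open maps already set down above.
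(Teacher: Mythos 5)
Most of your verification does track the paper's: horizontal identities, the closure of horizontal arrows under Day-convolution composition (elements lie over some $m \leqslant k\ell$ with $k,\ell$ open, and openness is closed under composition and downwards), identity cells, and composition of cells along a horizontal boundary are all handled essentially as in the paper, which likewise dismisses those as routine. The genuine gap is in composition along a vertical boundary. You assert that ``the square attached to an element of its source is the \dots\ horizontal pasting of the two Beck--Chevalley squares attached to the factors,'' and so reduce the problem to the horizontal pasting of Beck--Chevalley squares. That is not what the attached square is. An element of $\mathrm{el}\ (W U)$ over $m$ is represented by $(k,u) \in \mathrm{el}\ U$, $(\ell,w) \in \mathrm{el}\ W$ \emph{together with} a $2$-cell $m \leqslant k\ell$, and $m$ is in general strictly below $k\ell$; moreover you never identify where this element goes under the composite cell. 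Its image lies not over $k'\ell'$ but over the lift $m' = \overline{fm}.k'\ell' \leqslant k'\ell'$ obtained from the discrete-fibration property of the vertical boundary ($h$, in the paper's notation). So the square whose Beck--Chevalley property must be checked has horizontal edges $m$ and $m'$, and it is simply not the pasting of the factors' squares; your reduction ``it is enough to show that such a pasting is again Beck--Chevalley'' therefore fails exactly in the case the paper singles out as the nontrivial one.

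The missing step is supplied in the paper as follows: after computing the image element as above, one decomposes the square with edges $m$, $m'$ as the horizontal pasting of the two given Beck--Chevalley squares with a further square whose horizontal edges are the restriction idempotents $\overline m$ and $\overline{fm}$; that extra square is Beck--Chevalley because the local left adjoints $(\overline m)_!$ and $(\overline{fm})_!$ are just inclusions, and only then does one invoke, as you do, that Beck--Chevalley squares compose. Your observation that a composite of open maps is open (with the evident local left adjoint) is needed for that pasting, but it does not substitute for identifying the image element and handling the discrepancy between $m$ and $k\ell$; as written, your argument for composition along a vertical boundary does not go through.
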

\begin{proof}
Because restriction idempotents in $\cat{Stab}^\op$ are open maps, it follows that horizontal identities of $\mathbb R$ are in $\cat r \mathbb R$. We next show that, given $U \colon A \tor C$ and $W \colon C \tor E$ in $\cat r \mathbb R$, the composite $W U$ is again in $\cat r \mathbb R$. By the formula for Day convolution, an object of $\mathrm{el}\ W  U$ lying over $m \in \Gamma(\cat{Stab}^\op)(A,E)$ is an element of the set \[\textstyle\int^{k,\ell} Uk \times W\ell \times \Gamma(\cat{Stab}^\op)(A,E)(m, \ell k)\] so that an object of $\mathrm{el}\ (W  U)$ can be represented by giving
\begin{equation}\label{eq:triple}
\begin{gathered}(k\colon A \leftarrow C,\, u \in Uk) \in \mathrm{el}\ U\text,\\
(\ell \colon C \leftarrow E, w \in W\ell) \in \mathrm{el}\ W\\
\text{and} \quad m \leqslant k\ell \text{ in $\cat{Stab}$}\rlap{ .}
\end{gathered}
\end{equation}
Because $U$ and $W$ are in $\cat r \mathbb R$, both $k$ and $\ell$ are open, whence also $k\ell$. But now $m \leqslant k \ell$ implies that $m$ is open. Thus $\mathrm{el}\ W  U \to \Gamma(\cat{Stab}^\op)(A,E)$ factors through $\Gamma(\cat{oStab}^\op)(A,E)$ so that $W U$ lies in $\cat r \mathbb R$ as required.

%
It remains to show that cells of $\cat r \mathbb R$ are closed under identities and composition. This is not hard to do for vertical and horizontal identities, and for composition along a horizontal boundary; we consider the case of composition along a vertical boundary in more detail. Given cells
\begin{equation*}
\cd{
  A \ar[d]_f \ar@{->}[r]|{\!|\!}^{U} \dtwocell{dr}{\alpha} & C \dtwocell{dr}{\beta} \ar[d]^g \ar@{->}[r]|{\!|\!}^{W} & E \ar[d]^h \\
  B \ar@{->}[r]|{\!|\!}_{V}  & D \ar@{->}[r]|{\!|\!}_{X} & F
}
\end{equation*}
in $\cat r \mathbb R$, we must show that their composite in $\mathbb R$ lies again in $\cat r \mathbb R$. 
Now, given an object of $\mathrm{el}\ W  U$ of the form~\eqref{eq:triple}, its image under $\beta \alpha \colon \mathrm{el}\ W U \to \mathrm{el}\ X  V$ is obtained as follows.
If the images of $(k,u)$ and $(\ell, w)$ under $\alpha$ and $\beta$ are $(k', u')$ and $(\ell', w')$ respectively, then we have a commutative diagram
\begin{equation*}
\cd{
B \ar@{<-}[r]^-{k'} \ar@{<-}[d]_f & D \ar@{<-}[d]^{g} \ar@{<-}[r]^{\ell'} & F \ar@{<-}[d]^h \\
A \ar@{<-}[r]_-{k} & C \ar@{<-}[r]_\ell &  E
}
\end{equation*}
in $\cat{Stab}$, and a $2$-cell $ m \leqslant k \ell$. Because $h$ is a discrete fibration in $\cat{Stab}^\op$, we obtain from these data a morphism $m' \colon B \leftarrow F$ of $\cat{Stab}$ with $m' \leqslant k' \ell'$ and $m'h = fm$; explicitly, $m' = \overline{fm}.k'\ell'$. Now the image under $\beta \alpha$ of~\eqref{eq:triple} is the object of $\mathrm{el}\ (X V)$ represented by the data
\begin{equation*}
(k',u') \in \mathrm{el}\ V \text, \quad (\ell',w') \in \mathrm{el}\ X \quad \text{and} \quad m' \leqslant k' \ell'\rlap{ .}
\end{equation*}
To verify that $\beta \alpha$ lies in $\cat r \mathbb R$, we must show that the left-hand square of
\begin{equation*}
\cd{
E \ar[r]^{m\vphantom{\overline{fm}}} \ar[d]_h & A \ar[d]^f \\
F \ar[r]_{m'} & B
} \qquad = \qquad
\cd{
 E \ar[d]^h \ar[r]^\ell & C \ar[r]^k \ar[d]^{g} & A \ar[d]^f \ar[r]^{\overline{m}} & A \ar[d]^f \\
 F \ar[r]_{\ell'} & D \ar[r]_{k'} & B \ar[r]_{\overline {fm}} & B
}
\end{equation*}
is a Beck-Chevalley square in $\cat{Stab}$. This square decomposes as the composite of the squares on the right. All three of those squares are Beck-Chevalley, the left two by assumption and the right-hand one since $(\overline{fm})_!$ and $(\overline m)_!$ are given simply by inclusion. The result now follows from the easy observation that  Beck-Chevalley squares compose.
%
%
%
\end{proof}
Given the manner in which we have defined $\cat r\mathbb R$, it is now an immediate consequence of Theorems~\ref{thm:equiv2} and~\ref{thm:rangeequiv1} that:
\begin{Thm}\label{thm:rangeequiv2}
The $2$-category of range restriction categories is $2$-equivalent to the $2$-category of $\cat r \mathbb R$-enriched categories.
\end{Thm}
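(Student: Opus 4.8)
The plan is to cut down the $2$-equivalence behind Theorem~\ref{thm:equiv3} along the inclusion $\cat r\mathbb R\hookrightarrow\mathbb R$. Recall that this equivalence is the composite of the equivalence $\Lambda\colon\mathbb R\text-\cat{Cat}\to\cat{2}\text-\cat{Cat}\mathbin{\sslash_{\mathrm{ldf}}}\Gamma\cat{Stab}^\op=\cat{rCat}''$ of Proposition~\ref{prop:genwood} (taken at $\K=\Gamma\cat{Stab}^\op$) with the equivalence $\cat{rCat}''\simeq\cat{rCat}$ of Theorem~\ref{thm:equiv2}. I would show that restricting $\Lambda$ to $\cat r\mathbb R$-enriched data yields a $2$-equivalence onto the sub-$2$-category of $\cat{rCat}$ whose objects are restriction categories admitting a range operator and whose $1$-cells are restriction functors preserving it — which, by uniqueness of range operators, is $\cat{rrCat}$ — by matching the three layers of the definition of $\cat r\mathbb R$ against the characterisation of $\cat{rrCat}$ inside $\cat{rCat}$ furnished by the two Propositions saying when a restriction category admits a range operator and when a restriction functor preserves it (equivalently, by Theorem~\ref{thm:rangeequiv1}).

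For objects, a $\mathbb R$-category $\C$ is by definition a $\cat r\mathbb R$-category exactly when each discrete fibration $\mathrm{el}\,\C(x,y)\to\Gamma(\cat{Stab}^\op)(\abs x,\abs y)$ factors through the full (indeed downward-closed) subposet $\Gamma(\cat{oStab}^\op)(\abs x,\abs y)$. This discrete fibration is precisely the hom-component of the local discrete fibration $\pi_\C\colon\Lambda\C\to\Gamma\cat{Stab}^\op$ that $\Lambda$ builds, so — using that $\cat{oStab}^\op$ and $\cat{Stab}^\op$ have the same objects and that openness of a stable map is invariant under isomorphism — this is equivalent to $\pi_\C$ factoring through $\Gamma(\cat{oStab}^\op)$. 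Under Theorem~\ref{thm:equiv2}, $\pi_\C$ is identified up to isomorphism with the fundamental functor of the restriction category $\widehat{\Lambda\C}$, so the condition becomes: the fundamental functor of $\widehat{\Lambda\C}$ factors through $\cat{oStab}^\op$, i.e.\ (cf.~\cite[Proposition~2.13]{Cockett2012Range}) $\widehat{\Lambda\C}$ admits a range operator. The composition and identity cells of $\C$ add nothing, being globular: the Beck--Chevalley requirement on a cell of $\cat r\mathbb R$ is vacuous when its vertical boundaries are identities. Reading this backwards gives essential surjectivity of the restricted $\Lambda$.

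For morphisms, $\Lambda$ sends a $\mathbb R$-functor $H\colon\C\to\D$ to a $2$-functor $\Lambda H$ equipped with the $2$-natural transformation $\gamma_H$ whose components are the discrete fibrations $H_x\colon\abs{Hx}\to\abs x$. Unwinding Lemma~\ref{lem:otherdesc}, the structure cell $H_{x,y}$ is the action of $\Lambda H$ on the hom from $x$ to $y$; hence the Beck--Chevalley requirement defining a cell of $\cat r\mathbb R$ demands precisely that, for every element $(k,u)$ of $\C(x,y)$ — equivalently, every morphism of $\widehat{\Lambda\C}$ — the square in $\cat{Stab}$ assembled from the images of $(k,u)$ under $\pi_\C$ and $\pi_\D\circ\Lambda H$, with vertical legs the components $H_x,H_y$ of $\gamma_H$, be Beck--Chevalley. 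Transported along Theorem~\ref{thm:equiv2}, these are exactly the naturality squares of the transformation $\varphi$ of~\eqref{eq:gammatot} attached to $\widehat{\Lambda H}$; so the condition says $\varphi$ is Beck--Chevalley, i.e.\ $\widehat{\Lambda H}$ preserves the range operator. Finally, a $\mathbb R$-natural transformation between $\cat r\mathbb R$-functors is automatically a $\cat r\mathbb R$-natural transformation, since its cells have a horizontal identity as source — whose local left adjoint is an inclusion — for which the Beck--Chevalley requirement is automatic; this matches the fact that $2$-cells of $\cat{rrCat}$ are merely total natural transformations. The three layers together make the restricted $\Lambda$ essentially surjective and $2$-fully faithful, giving $\cat r\mathbb R\text-\cat{Cat}\simeq\cat{rrCat}$.

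The step I expect to be most delicate is the morphism-level matching: one must chase Lemma~\ref{lem:otherdesc} carefully, tracking the two opposite variances in play — covariant in $\cat{Stab}$, contravariant in $\cat{Stab}^\op$ — to make sure the squares packaged into the cells of $\cat r\mathbb R$ really are the naturality squares of $\varphi$, rather than a reindexed or reoriented variant.
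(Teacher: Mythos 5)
Your route is the paper's own: there Theorem~\ref{thm:rangeequiv2} is presented as an immediate consequence of Theorems~\ref{thm:equiv2} and~\ref{thm:rangeequiv1} together with the way the cells of $\cat r\mathbb R$ were engineered, and your object-level and $1$-cell-level matchings carry this out correctly. The homs landing in open maps do say exactly that the fundamental functor of $\widehat{\Lambda\C}$ factors through $\cat{oStab}^\op$; the globular identity and composition cells indeed impose nothing further; and the Beck--Chevalley requirement on a functor cell $H_{x,y}$, evaluated at an element $\phi$ of $\C(x,y)$ and $b \leqslant \overline\phi$, unwinds to $H(\widehat{\phi b}) = \widehat{H\phi\cdot Hb}$, i.e.\ to preservation of ranges, matching the Beck--Chevalley property of $\varphi$ in~\eqref{eq:gammatot}.

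The genuine gap is at the $2$-cell step, which is precisely what the paper's one-line proof leaves implicit. Your reason for automaticity---the source of $\alpha_x$ is a horizontal identity, whose local left adjoint is an inclusion---looks at the wrong leg: in the mate square of the Beck--Chevalley condition one compares $f\circ h_!$ with $k_!\circ g$, and the left adjoint that actually enters is that of the \emph{bottom} boundary, i.e.\ of the image element in $\D(Hx,Kx)$; by the range construction this is $c\mapsto\widehat{\psi c}$ for the corresponding morphism $\psi$ of $\D$, not an inclusion. Concretely, write $H,K$ also for the associated range restriction functors and $\theta$ for the total transformation with components $\theta_x\colon Hx\to Kx$. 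The element of $1_{\abs x}$ over the identity has image $(\om{\theta_x},\theta_x)$, so the square to be checked has the identity along the top, the total legs $e\mapsto Ke$ and $e\mapsto He$, and $\om{\theta_x}$ along the bottom; its mate condition at $b\in\O(x)$ reads $Kb=\widehat{\theta_x\cdot Hb}$, and since $\theta_x\cdot Hb=Kb\cdot\theta_x$ by naturality, axiom (RR3) turns the right side into $Kb\wedge\widehat{\theta_x}$. Taking $b=1$ forces $\widehat{\theta_x}=1_{Kx}$: the cell $\alpha_x$ lies in $\cat r\mathbb R$ only when $\theta_x$ has full range (in $\cat{Set}_p$, is surjective). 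A general total natural transformation between range restriction functors need not satisfy this---take $\C$ the one-object, identity-only restriction category, $H,K\colon\C\to\cat{Set}_p$ picking sets $A,B$, and any non-surjective function $A\to B$---so the automaticity you assert does not hold under the definitions as stated. You therefore need either a real argument here (I do not see one) or a careful discussion of how the $2$-cells of $\cat r\mathbb R\text-\cat{Cat}$ are to be matched with the total natural transformations of $\cat{rrCat}$; as written, $2$-fully-faithfulness is not established, and this is exactly where the substance of the proof lies.
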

%
%

%
%


\bibliographystyle{acm}

\bibliography{bibdata}

\begin{thebibliography}{10}

\bibitem{Cockett2011Differential}
{\sc Cockett, J. R.~B., Cruttwell, G. S.~H., and Gallagher, J.~D.}
\newblock Differential restriction categories.
\newblock {\em Theory and Applications of Categories 25\/} (2011), 537--613.

\bibitem{Cockett2012Range}
{\sc Cockett, J. R.~B., Guo, X., and Hofstra, P.}
\newblock Range categories {I}: {G}eneral theory.
\newblock {\em Theory and Applications of Categories 26\/} (2012), 412--452.

\bibitem{Cockett2008Introduction}
{\sc Cockett, J. R.~B., and Hofstra, P. J.~W.}
\newblock Introduction to {T}uring categories.
\newblock {\em Annals of Pure and Applied Logic 156}, 2-3 (2008), 183--209.

\bibitem{Cockett2002Restriction}
{\sc Cockett, J. R.~B., and Lack, S.}
\newblock Restriction categories {I}: categories of partial maps.
\newblock {\em Theoretical Computer Science 270}, 1-2 (2002), 223--259.

\bibitem{Day1970On-closed}
{\sc Day, B.}
\newblock On closed categories of functors.
\newblock In {\em Reports of the Midwest Category Seminar IV}, vol.~137 of {\em
  Lecture Notes in Mathematics}. Springer, 1970, pp.~1--38.

\bibitem{Day1973Note}
{\sc Day, B.}
\newblock Note on monoidal localisation.
\newblock {\em Bulletin of the Australian Mathematical Society 8\/} (1973),
  1--16.

\bibitem{Freyd1972Categories}
{\sc Freyd, P.~J., and Kelly, G.~M.}
\newblock Categories of continuous functors {I}.
\newblock {\em Journal of Pure and Applied Algebra 2}, 3 (1972), 169--191.

\bibitem{Grandis1999Limits}
{\sc Grandis, M., and Par{\'e}, R.}
\newblock Limits in double categories.
\newblock {\em Cahiers de Topologie et Geom\'{e}trie Diff\'{e}rentielle
  Cat\'{e}goriques 40}, 3 (1999), 162--220.

\bibitem{Guo2012Products}
{\sc Guo, X.}
\newblock {\em Products, Joins, Meets, and Ranges in Restriction Categories}.
\newblock PhD thesis, University of Calgary, 2012.

\bibitem{Kelly1982Basic}
{\sc Kelly, G.~M.}
\newblock {\em Basic concepts of enriched category theory}, vol.~64 of {\em
  London Mathematical Society Lecture Note Series}.
\newblock Cambridge University Press, 1982.

\bibitem{Leinster2002Generalized}
{\sc Leinster, T.}
\newblock Generalized enrichment of categories.
\newblock {\em Journal of Pure and Applied Algebra 168}, 2--3 (2002), 391--406.

\bibitem{Street1973The-comprehensive}
{\sc Street, R., and Walters, R. F.~C.}
\newblock The comprehensive factorization of a functor.
\newblock {\em Bulletin of the American Mathematical Society 79\/} (1973),
  936--941.

\bibitem{Walters1981Sheaves}
{\sc Walters, R. F.~C.}
\newblock Sheaves and {C}auchy-complete categories.
\newblock {\em Cahiers de Topologie et Geom\'{e}trie Diff\'{e}rentielle
  Cat\'{e}goriques 22}, 3 (1981), 283--286.

\bibitem{Wood2006Variation}
{\sc Wood, R.}
\newblock Variation and enrichment.
\newblock Talk at CMS Summer Meeting, June 2006.

\end{thebibliography}

\end{document}